\documentclass[11pt,a4paper]{amsart}
\usepackage{amssymb,amsmath,amsthm}
\usepackage{tikz}
\usepackage{tikz-cd}
\usepackage{enumitem}
\usepackage{amsaddr}
\usepackage{etoolbox}
\usepackage{upgreek}\usetikzlibrary{matrix}
\usepackage[utf8]{inputenc}
\usepackage{hyperref}
\usepackage{thmtools,thm-restate}
\usepackage{float}
\usepackage{cleveref}
\usetikzlibrary{decorations.pathmorphing}
\usepackage{subcaption}
\usepackage{mathrsfs}

\usetikzlibrary{fit, shapes.geometric}
\usetikzlibrary{backgrounds}

\textwidth=7.3in \textheight=10in \hoffset=-1.2in \voffset=-0.7in

\newcommand{\C}{\mathcal}
\newcommand{\s}{\mathsf}
\newcommand{\f}{\mathfrak}
\newcommand{\R}{\mathrm}
\newcommand{\B}{\mathbf}
\newcommand{\ang}[1]{\langle#1\rangle}
\newcommand{\Tfac}{T^{\mathrm{fac}}}
\newcommand{\Tim}{T^{\mathrm{im}}}
\newcommand{\Hom}{\R{Hom}_{\Lambda}(M_1,M_2)}

\newcounter{thmcount}
\setcounter{thmcount}{0}

\newtheorem{defn}{Definition}[section]

\newtheorem{lem}[defn]{Lemma}

\newtheorem{prop}[defn]{Proposition}
\newtheorem*{prop*}{Proposition}
\newtheorem{thm}[defn]{Theorem}
\newtheorem{conj}[defn]{Conjecture}
\newtheorem{cor}[defn]{Corollary}

\newtheorem*{claim*}{Claim}

\theoremstyle{remark}

\theoremstyle{remark}

\theoremstyle{remark}
\newtheorem{exmp}[defn]{Example}
\AtEndEnvironment{exmp}{\hfill$\Diamond$}
\theoremstyle{remark}

\theoremstyle{remark}

\theoremstyle{remark}

\theoremstyle{remark}

\theoremstyle{remark}
\newtheorem{que}[defn]{Question}
\theoremstyle{remark}
\newtheorem{rmk}[defn]{Remark}
\theoremstyle{remark}

\numberwithin{equation}{section}

\title{Generalised tree modules: Hom-sets and indecomposability}
\author{Annoy Sengupta and Amit Kuber\\Corresponding author: Amit Kuber}

\address{Department of Mathematics and Statistics\\Indian Institute of Technology, Kanpur\\ Uttar Pradesh, India}
\email{annoysgp20@iitk.ac.in, askuber@iitk.ac.in}

\date{}

\keywords{zero-relation algebra, tree module, graph map, indecomposability, Dynkin quiver of type D}
\subjclass[2020]{16G20}

\begin{document}

\begin{abstract}
For a zero-relation algebra over a field $\mathcal K$, Crawley-Boevey introduced the concept of a tree module and provided a combinatorial description of a basis for the space of homomorphisms between two tree modules--the basis elements are called graph maps. The indecomposability of tree modules is essentially due to Gabriel. We relax a condition in the definition of a tree module to define \emph{generalised tree modules} and when $\mathrm{char}(\mathcal K)\neq2$, under a certain condition, provide a combinatorial description of a finite generating set for the space of homomorphisms between two such modules--we call the generators \emph{generalised graph maps}. As an application, we provide a sufficient condition for the (in)decomposability of certain generalised tree modules. We also show that all indecomposable modules over a Dynkin quiver of type $\mathbf D$ are isomorphic to generalised tree modules--this result also follows from a theorem of Ringel which states that all exceptional modules over the path algebra $\mathcal KQ$ of a finite quiver $Q$ are generalised tree modules.
\end{abstract}

\maketitle

\section{Introduction}\label{sec: intro}
Let $\C K$ be a field with $\R{char}(\C K)\neq2$. Let $ Q:=( Q_0, Q_1,\varsigma,\varepsilon)$ be a locally finite quiver with vertex set $Q_0$, arrow set $Q_1$, and source and target functions $\varsigma,\varepsilon:Q_1\to Q_0$ respectively. Further let $\rho$ be a set of paths in $ Q$ of length at least $2$ such that for each $v\in Q_0$, there is some $n\geq0$ such that $\ang\rho$ contains all paths of length exceeding $n$ with source or target at $v$. The quotient algebra $\Lambda:=\C KQ/\ang{\rho}$ associated with the locally bound quiver $(Q,\rho)$ is called a \emph{zero-relation algebra}. There is an equivalence between the category $\Lambda\text{-}\R{mod}$ of finite-dimensional left $\Lambda$-modules and the category of finite-dimensional bound $\C K$-representations of $(Q,\rho)$. In other words, each left $\Lambda$-module can be viewed as a collection $((V_j)_{j\in  Q_0},(\varphi_{\gamma})_{\gamma\in  Q_1})$, where $V_j$ is a $\C K$-vector space for each $j\in  Q_0$ and $\varphi_\gamma:V_{\varsigma(\gamma)}\to V_{\varepsilon(\gamma)}$ is a linear map for each $\gamma\in  Q_1$.

The proofs of indecomposability of certain modules over such algebras could be traced back to the universal covering technique introduced by Gabriel \cite{gabriel1981covering}, and Bongartz and Gabriel \cite{gabrielbongartz}. However, a complete classification of finitely generated modules over a zero-relation algebra is an open problem.

The main objects of interest in this paper are generalisations of certain modules known as \emph{tree modules}.
\begin{defn}\cite[\S~1]{crawley1989maps}\label{defn: tree module}
A \emph{tree} is a finite quiver $T=(T_0,T_1,s,t)$ whose underlying undirected graph is simply connected. We assume $T_0\subset\mathbb N$, where $\mathbb N$ is the set of non-negative integers. We denote by $V_T$ the $\C KT$-module corresponding to the $\C K$-representation $((\C K)_{n\in T_0},(1_{\C K})_{a\in T_1})$. Let $F:T\to Q$ be a quiver morphism, i.e., functions $F_j:T_j\to Q_j$ for $j=1,2$ compatible with source and target functions, satisfying
\begin{enumerate}
    \item \label{bound quiver morphism 1} (Bound quiver morphism) there is no path $p$ in $T$ such that $F(p)\in\ang\rho$; and
    \item \label{tree module condition} (Tree module condition) $F(a)\neq F(b)$ for distinct $a,b\in T_1$ with $s(a)=s(b)$ or $t(a)=t(b)$.
\end{enumerate}
A \emph{tree module} is a module of the form $F_\lambda(V_T)$, where the \emph{push-down functor} $F_\lambda:\C KT\text{-}\R{mod}\to\Lambda\text{-}\R{mod}$ is defined by
$$(F_\lambda((U_n)_{n\in T_0},(\psi_a)_{a\in T_1}))_j:=\bigoplus_{n\in F^{-1}(j)}U_n,\quad (F_\lambda((U_n)_{n\in T_0},(\psi_a)_{a\in T_1}))_\gamma:=\bigoplus_{a\in F^{-1}(\gamma)}\psi_a.$$
\end{defn}

A quiver morphism $F:T\to Q$ satisfying Condition \ref{bound quiver morphism 1} will be written as $F:T\to( Q, \rho)$ to emphasise the set $\rho$ of relations. 

The next result is essentially due to Gabriel and uses universal covering techniques.

\begin{thm}\cite[\S~3.5,4.1]{gabriel1981covering}
Tree modules over a zero-relation algebra are indecomposable.
\end{thm}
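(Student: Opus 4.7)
The plan is to deduce the indecomposability of $F_\lambda(V_T)$ via Gabriel's universal covering technique. First, I would verify that $V_T$ itself is indecomposable over $\C KT$: a $\C KT$-endomorphism of $V_T$ is a tuple $(\lambda_n)_{n\in T_0}$ of scalars satisfying $\lambda_{t(a)}=\lambda_{s(a)}$ for every $a\in T_1$, and connectivity of $T$ forces all $\lambda_n$ to coincide, so $\R{End}_{\C KT}(V_T)\cong\C K$ is local.

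To transfer this to $F_\lambda(V_T)$, I would pass to the universal cover $\pi:(\tilde Q,\tilde\rho)\to( Q,\rho)$ of the bound quiver, whose Galois group is the fundamental group $G=\pi_1( Q,\rho)$. Simple connectedness of $T$, together with Condition \ref{bound quiver morphism 1}, lets $F$ lift to a bound quiver morphism $\tilde F:T\to(\tilde Q,\tilde\rho)$ with $\pi\circ\tilde F=F$. A walk-based argument then shows that $\tilde F$ is injective on both $T_0$ and $T_1$: an identification $\tilde F(n)=\tilde F(m)$ for $n\neq m$ would make the unique undirected path from $n$ to $m$ in $T$ map to a closed walk in the simply connected quiver $\tilde Q$, and any such closed walk must backtrack, producing a pair of distinct arrows of $T$ sharing an endpoint but mapping to the same arrow of $\tilde Q$---a direct violation of the tree module condition \ref{tree module condition}. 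With $\tilde F$ injective, $\tilde F_\lambda(V_T)$ is indecomposable over $\C K\tilde Q$ with support an isomorphic copy of $T$, and the distinct $G$-translates of this support in $\tilde Q$ are pairwise disjoint subtrees. Gabriel's theorem that push-down along a Galois cover preserves indecomposability of modules with trivial $G$-stabilizer then yields that $F_\lambda(V_T)=\pi_\lambda(\tilde F_\lambda(V_T))$ is indecomposable over $\Lambda$.

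The main obstacle is the careful setup of the universal cover in this locally-bounded generality together with the clean verification of the lifting and injectivity of $\tilde F$: Condition \ref{bound quiver morphism 1} is exactly what is needed for the lift to respect $\tilde\rho$, while Condition \ref{tree module condition} is what powers the injectivity step. Once these technicalities are handled, the conclusion follows from the classical covering-theoretic preservation of indecomposability along Galois covers.
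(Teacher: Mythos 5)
The paper does not prove this theorem itself; it cites Gabriel \cite[\S~3.5,4.1]{gabriel1981covering} and notes only that the proof "uses universal covering techniques," which is precisely the route you take: lift $F$ to the universal cover (a tree, since zero relations are monomial and hence do not alter the homotopy relation), use Condition \ref{tree module condition} to force injectivity of the lift via the backtracking argument, and invoke preservation of indecomposability under push-down along the Galois cover. Your sketch is correct and is essentially the same argument as the cited source.
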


A combinatorial description of a basis for the space $\R{Hom}_\Lambda(M_1,M_2)$ of homomorphisms between two tree modules $M_1$ and $M_2$ was given by Crawley-Boevey in \cite{crawley1989maps}.

\begin{defn}
A subtree $\Tfac$ of a tree $T$ is said to be a \emph{factor subtree} if for each arrow $n\xrightarrow{a}m$ in $T_1$, whenever $m\in \Tfac_0$ then $n\in \Tfac_0$. Dually, a subtree $\Tim$ of $T$ is said to be an \emph{image subtree} if for each arrow $n\xrightarrow{a}m$ in $T_1$, whenever $n\in \Tim_0$ then $m\in \Tim_0$.
\end{defn}

\begin{thm}\cite[\S~2]{crawley1989maps}\label{thm: c-b}
For $j=1,2$, let $T^j$ be a tree, $F_j:T^j\to( Q,\rho)$ be a bound quiver morphism, and $M_j:={F_{j\lambda}}(V_{T^1})$ be the associated tree module. Then the Hom-set $\Hom$ has as a basis the set of triples $(\Tfac,\Tim,\Phi)$, known as \emph{graph maps}, satisfying the following conditions:
\begin{enumerate}
    \item $\Tfac$ is a non-empty factor subtree of $T^1$;
    \item $\Tim$ is an image subtree of $T^2$; and
    \item $\Phi:\Tfac\to \Tim$ is a quiver isomorphism satisfying ${F_2}\circ\Phi={F_1}$.
\end{enumerate}
\end{thm}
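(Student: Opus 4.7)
The plan is to split the proof into three pieces: (i) each triple $(\Tfac,\Tim,\Phi)$ defines an element of $\Hom$, (ii) the corresponding morphisms are linearly independent, and (iii) they span $\Hom$. The engine behind (ii) and (iii) is a deterministic \emph{propagation} procedure that attempts to grow a candidate graph map starting from a single pair $(n_0,m_0)\in T^1_0\times T^2_0$ with $F_1(n_0)=F_2(m_0)$.

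For (i), I would define the linear map associated with $(\Tfac,\Tim,\Phi)$ by sending the standard basis vector $e_n$ to $e_{\Phi(n)}$ if $n\in\Tfac_0$ and to $0$ otherwise. Verifying commutativity with each $(M_\bullet)_\gamma$ is a routine case analysis: the factor subtree property ensures that the image of $e_n$ under $(M_1)_\gamma$ is killed by $f$ when $n\notin\Tfac_0$; the image subtree condition together with the tree module conditions on $F_1,F_2$ pin down the unique candidate arrows on each side and force the two compositions to agree.

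For (ii) and (iii), the propagation from $(n_0,m_0)$ inspects the arrows of $T^1$ at $n$ and of $T^2$ at $m=\Phi(n)$: for each outgoing arrow $b:m\to m'$ in $T^2_1$ with $F_2(b)=\gamma$, it requires a (unique, by the tree module condition) outgoing $a:n\to n'$ in $T^1_1$ with $F_1(a)=\gamma$, in which case it adds $(n',m')$, and symmetrically for incoming arrows; otherwise it fails. I would prove that whenever the procedure succeeds the output is a bona fide graph map, and, crucially, that any graph map whose ``support'' $\{(n,\Phi(n)):n\in\Tfac_0\}$ contains $(n_0,m_0)$ coincides with this output. The main technical hurdle is the well-definedness and injectivity of the candidate $\Phi$: were $\Phi(n')=\Phi(n'')$ for distinct $n',n''\in\Tfac_0$, the unique path between them in $\Tfac$ would map to a non-trivial closed walk in $T^2$, and since $T^2$ is a tree this walk must retrace somewhere; the retrace produces two distinct arrows of $T^1$ meeting at a common vertex with equal $F_1$-images, contradicting the tree module condition for $F_1$.

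With the uniqueness statement in hand, linear independence follows from the disjointness of the supports of distinct graph maps: evaluating $\sum_i\lambda_ig_i=0$ at any pair in the support of $g_i$ yields $\lambda_i=0$. For spanning, I would translate the morphism equations for an arbitrary $f$ with matrix $(c_{n,m})$ into equalities $c_{n,m}=c_{n',m'}$ along matched arrows and vanishing conditions where matching fails. Given $c_{n_0,m_0}\neq 0$, the propagation from $(n_0,m_0)$ cannot fail without forcing $c_{n_0,m_0}=0$ through these relations, so it succeeds and yields a graph map $g$ along whose support $c$ is constantly $c_{n_0,m_0}$. Then $f-c_{n_0,m_0}g$ is a morphism whose support is strictly smaller than that of $f$, and induction on the size of the support closes the argument.
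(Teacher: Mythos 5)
The paper does not prove this statement; it is quoted from Crawley-Boevey, so there is no in-paper argument to compare against. Your proposal is a correct reconstruction of the standard proof, and its spanning step (propagate from a nonzero matrix entry, extract a graph map on which the entry is constant, subtract, and induct on the size of the support) is exactly the strategy the authors themselves deploy later for the generalised version in Theorem~\ref{thm: main 1} via \Cref{prop: support is complete} and \Cref{lem}. One small point worth making explicit when you write this up: you name ``well-definedness and injectivity'' of $\Phi$ as the hurdle but only spell out the injectivity half (a closed walk in $T^2$ retracing forces two arrows of $T^1$ with a common endpoint and equal $F_1$-images). Well-definedness --- ruling out $(n,m')$ and $(n,m'')$ with $m'\neq m''$ in the propagation closure --- is the mirror argument: the chain projects to a closed walk in $T^1$, which must backtrack, and the backtrack yields two distinct arrows of $T^2$ with a common source or target and equal $F_2$-images, violating the tree module condition for $F_2$. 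With that symmetric half supplied, the proof is complete.
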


\begin{rmk}\label{rmk: Flambda functor}
In the absence of Condition \ref{tree module condition} in \Cref{defn: tree module} for a bound quiver morphism $F:T\to( Q,\rho)$, the following assignments still define a functor (by an abuse of notation) $F_\lambda:\C KT\text{-}\R{mod}\to\Lambda\text{-}\R{mod}$:
$$(F_\lambda((U_n)_{n\in T_0},(\psi_a)_{a\in T_1}))_j:=\bigoplus_{n\in F^{-1}(j)}U_n,\quad(F_\lambda((U_n)_{n\in T_0},(\psi_a)_{a\in T_1}))_\gamma:=\sum_{a\in F^{-1}(\gamma)}\psi_a.$$  
\end{rmk}
Now we are ready to define the main object of study in this paper.
\begin{defn}\label{defn: gen tree module}
A \emph{generalised tree module} is a module of the form $F_{\lambda}(V_T)$, where $T$ is a tree, $F:T\to ( Q,\rho)$ is a bound quiver morphism, and $F_\lambda$ is the functor as defined in \Cref{rmk: Flambda functor}.
\end{defn}

If $Q$ is finite and $\rho=\emptyset$, the notion of a generalised tree module coincides with the notion of a \emph{tree module}\footnote{Caveat: Ringel's tree modules are different from Crawley-Boevey's tree modules.} introduced by Ringel \cite{ringelexceptional}. Suppose $M\in\C KQ\text{-}\R{mod}$ admits a basis $\C B$ for its underlying vector space. The \emph{coefficient quiver} $\Gamma(M,\C B):=(\C B,\Gamma_1)$ of $M$ with respect to the basis $\C B$ is defined as follows: given $b,b'\in\C B$ and $\alpha\in Q_1$, there is an arrow $(\alpha,b,b')$ from $b$ to $b'$ in $\Gamma_1$ if $\alpha\cdot b$ admits a non-zero coefficient of $b'$.  If the coefficient quiver $\Gamma(M,\C B)$ is a tree, then there are exactly $|\C B|-1$ arrows of $\Gamma$. Then it follows from \cite[Property~2]{ringelexceptional} that whenever $(\alpha,b,b')\in\Gamma_1$ then the coefficient of $b'$ in the expression of $\alpha\cdot b$ could be chosen to be equal to $1$.

Given a generalised tree module $M:=F_\lambda(V_T)$, there is a canonical $\C K$-basis $\C B:=\{v_n\mid n\in T_0\}$ of $M$ induced by $F$ that admits a natural isomorphism $\Gamma(M,\C B)\cong T$.

A generalised tree module is not necessarily indecomposable as \Cref{exmp: gtm can be decomposable} demonstrates. Some special generalised tree modules called \emph{rooted tree modules} over rooted tree quivers were studied by Kinser \cite{Kinser}. Subsequently, Katter and Mahrt provided a necessary and sufficient condition \cite[Lemma~2]{katter_mahrt} for the indecomposability of some rooted tree modules.

\begin{exmp}\label{exmp: gtm can be decomposable}
If $\B A_2$ is the quiver $\B1\xrightarrow{\B a}\B 2$, then the $\C K\B A_2$-module $\C K\oplus\C K\xrightarrow{(1\ 1)}\C K$ is a decomposable generalised tree module associated with the data of the tree $T$ given by  $1\xrightarrow{a}2\xleftarrow{b}3$ with quiver morphism $F:T\to \B A_2$ given by $F(1)=F(3)=\B 1, F(2)=\B 2, F(a)=F(b)=\B a$.
\end{exmp}

We use the following characterisation of the indecomposability of $\Lambda$-modules.
\begin{prop}\cite[Lemma~I.4.6, Corollary~I.4.8]{assem2006elements}\label{prop: indecomposability condn assem}
If $M$ is a finite-dimensional $\Lambda$-module, then $M$ is indecomposable if and only if the endomorphism algebra $\R{End}_\Lambda(M)$ contains only two idempotents, viz. $\B 0$ and $\B 1_M$.
\end{prop}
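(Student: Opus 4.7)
The plan is to establish the biconditional via the standard correspondence between nontrivial direct-sum decompositions of $M$ and nontrivial idempotent endomorphisms of $M$; finite-dimensionality will in fact not be used in either direction.

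For the forward direction, I would suppose $M$ is indecomposable and take an arbitrary idempotent $e \in \R{End}_\Lambda(M)$, so $e^2 = e$. The complementary element $1_M - e$ is then also an idempotent, and satisfies $e(1_M-e) = 0 = (1_M-e)e$. Next I would verify that $M = \R{im}(e) \oplus \R{im}(1_M-e)$ as $\Lambda$-modules: both summands are submodules because they are images of $\Lambda$-linear maps; every $m \in M$ admits the expression $m = e(m) + (1_M-e)(m)$; and the sum is direct because applying $e$ to an element of the form $e(x) + (1_M-e)(y)$ and using $e^2 = e$ returns exactly $e(x)$. Indecomposability then forces one of the two summands to vanish, yielding $e = 0$ or $e = 1_M$.

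For the converse, I would argue contrapositively. Assume $M$ is decomposable, so there exist nonzero $\Lambda$-submodules $M_1, M_2$ with $M = M_1 \oplus M_2$. The endomorphism $e: M \to M$ projecting onto $M_1$ along $M_2$ (and then reincluding into $M$) is $\Lambda$-linear and satisfies $e^2 = e$. Since $M_1 \neq 0$ and $M_2 \neq 0$, we have $e \neq \B 0$ and $e \neq \B 1_M$, producing a nontrivial idempotent in $\R{End}_\Lambda(M)$.

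There is no serious obstacle here: the argument is a routine instance of the dictionary between idempotents of the endomorphism ring and direct-sum decompositions of the underlying module. The only mild care needed is in checking that the image of a $\Lambda$-linear idempotent is a $\Lambda$-submodule and that one genuinely gets an internal direct sum, both of which are formal from $e^2 = e$.
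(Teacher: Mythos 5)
Your proof is correct and is the standard idempotent--decomposition correspondence; the paper does not prove this statement itself but simply cites \cite[Lemma~I.4.6, Corollary~I.4.8]{assem2006elements}, where essentially the same argument appears. Your observation that finite-dimensionality is not needed for this particular equivalence is also accurate.
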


In view of the above characterisation, in an attempt to provide some necessary and sufficient conditions for the indecomposability of generalised tree modules, we first generalise \Cref{thm: c-b} that describes a basis of the Hom-set between two tree modules to a theorem (\Cref{thm: main 1}) that provides a combinatorial description of a finite generating set for the Hom-set between two generalised tree modules under certain conditions. To state this theorem clearly, we need to set up some notations and introduce some terminology.

Suppose that $T^1$ and $T^2$ are trees, $F_1:T^1\to  Q$, $F_2:T^2\to  Q$ are bound quiver morphisms, and $M_1:={F_{1\lambda}}(V_{T^1})$ and $M_2:={F_{2\lambda}}(V_{T^2})$ are generalised tree modules. Denote the canonical $\C K$-basis of $M_1$ by $\{v_n\}_{n\in T^1_0}$ and that of $M_2$ by $\{w_m\}_{m\in T^2_0}$. Let $\C P:=(\C P_0,\C P_1)$ denote the pullback quiver of the morphism $F_1:T^1\to Q$ along $F_2:T^2\to Q$ in the category of quivers and quiver morphisms.

If $M_1$ and $M_2$ are tree modules then each graph map $(\Tfac,\Tim,\Phi)$ can be viewed as a subquiver $\C G$ of $\C P$ induced by the set $\C G_0:=\{(n,\Phi(n))\mid n\in\Tfac_0\}$; the corresponding homomorphism is $\C {H_G}(v_n):=\sum_{(n,m)\in\C G_0}w_m$.
Note that the coefficients of $w_m$ in the expression of $\C {H_G}(v_n)$ are either $0$ or $1$. However, if either $M_1$ or $M_2$ is not a tree module, then as a result of dropping Condition \ref{tree module condition} of \Cref{defn: tree module}, we need to allow yet another coefficient, namely $-1$, while ensuring finiteness of a generating set for $\Hom$--the next example demonstrates the need for this adjustment.

\begin{exmp}\label{exmp: motivation of flip}
Consider the quiver $\B A_2$, and trees $T^1$ and $T^2$ defined in Figure \ref{fig:motivation of flip} with $F_1(1)=\B{1}$, $F_1(2)=F_1(3)=F_2(4)=\B{2}$ and $F_1(a)=F_1(b)=\B{a}$.

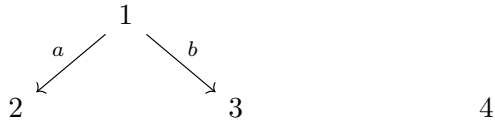
\begin{figure}[H]
    \[\begin{tikzcd}
  & 1 \arrow[ld, "a"'] \arrow[rd, "b"] &   &   \\
2 &                                    & 3 &&& 4
\end{tikzcd}\]
    \caption{Two trees $T^1$ (in the left) and $T^2$ (in the right)}
    \label{fig:motivation of flip}
\end{figure}

The assignment $v_1\mapsto0,\ v_2\mapsto w_4,\ v_3\mapsto-w_4$ describes a ``sign-flip" homomorphism as the only element in a basis of $\R{Hom}_{\C K\B A_2}(M_1,M_2)$.
\end{exmp}

Our goal is to find a finite generating set for $\Hom$ by choosing generators as certain elements of $\C P_0\times\{-1,0,1\}$. Examples like the above motivate us to record sign-flips as (undirected) edges between some vertices of $\C P$--we call the resulting structure the \emph{pullback network} $\C N[1]$ associated with the pair $(M_1,M_2)$. Moreover, to explicitly record the sign, we also define the \emph{$2$-covering network} $\C N[2]$ whose vertices are $(n,m,j)$, where $(n,m)\in\C P_0$ and $j\in\{-1,1\}$, equipped with a natural surjection $\pi:\C N[2]\to\C N[1]$. The network $\C N[2]$ is indeed a $2$-cover of $\C N[1]$ for a fiber of a vertex (resp., an arrow or an edge) in $\C N[1]$ with respect $\pi$ contains exactly two vertices (resp., two arrows or two edges).

The set $\C M_0$ of vertices of an arbitrary subnetwork $\C M$ of $\C N[2]$ can be used to define a $\C K$-linear map $\C{H_M}:M_1\to M_2$ by $\C{H_M}(v_n):=\sum_{(n,m,j)\in\C M_0}jw_m$. However, $\C{H_M}$ may not be a homomorphism for it may fail to satisfy $\C{H_M}(\alpha\cdot v_n)=\alpha\cdot(\C{H_M}(v_n))$ for some $\alpha\in Q_1$ and $n\in T^1_0$. When $M_1$ and $M_2$ are tree modules, such identities are satisfied because of the fact that $\Tfac$ and $\Tim$ are factor and image subtrees of $T^1$ and $T^2$ respectively. For the case of generalised tree modules, we ensure that these identities are satisfied by requiring $\C M$ to satisfy two key conditions namely, \emph{completeness}(=existence) (\Cref{defn: complete subnetwork}) and \emph{$\C R[2]$-freeness}(=uniqueness) (\Cref{defn: Rj free}), where $\C R[2]$ documents the data of certain ``blocked walks'' in the network $\C N[2]$. We use these two conditions to declare some connected subnetworks of $\C N[2]$ \emph{generalised graph maps} (\Cref{defn: ggm}) from $M_1$ to $M_2$. The set $\{\C{H_G}\mid\C G\text{ is a generalised graph map from $M_1$ to $M_2$}\}$ of homomorphisms is not necessarily linearly independent (\Cref{exmp: ggm}) but it ``should'' provide a finite combinatorially-described generating set for $\Hom$. Unfortunately, we encounter some obstacles along the way. 

Note that if $(n,m,1)$ and $(n,m,-1)$ are both vertices of a subnetwork $\C M$ of $\C N[2]$, then the coefficient of $w_m$ in the expression of $\C{H_M}(v_n)$ is $0$. We call a non-empty connected complete $\C R[2]$-free subnetwork $\C M$ of $\C N[2]$ a \emph{ghost} (\Cref{defn: ghost}) if $\C{H_M}=\B0$--our choice of terminology comes from the fact that these combinatorial objects are algebraically invisible. \Cref{exmp: ghost} shows a concrete example of a ghost. Say that the pair $(M_1,M_2)$ of generalised tree modules is \emph{ghost-free} if the associated $2$-covering network $\C N[2]$ does not contain a ghost.

Now we are ready to state the main results of the paper.
\begin{restatable}{theom}{mainone}\label{thm: main 1}
Suppose $(M_1,M_2)$ is a ghost-free pair of generalised tree modules. Given a homomorphism $\C H:M_1\to M_2$, there exist $\mu_1,\cdots,\mu_N\in\C K$ and generalised graph maps $\C G_1,\cdots,\C G_N$ from $M_1$ to $M_2$ such that $$\C H=\sum_{l=1}^N\mu_l\C H_{\C G_l}.$$
\end{restatable}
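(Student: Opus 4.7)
The plan is to argue by a stratified induction on the size of the support of a homomorphism, using the combinatorics of $\mathcal N[2]$ to peel off one generalised graph map at a time. Since any homomorphism preserves the $Q_0$-grading, I write $\mathcal H(v_n) = \sum_{m} c_{n,m}\, w_m$, with the sum running over $m$ such that $(n,m)\in\mathcal P_0$, and define $\mathrm{supp}(\mathcal H) := \{(n,m)\in\mathcal P_0 \mid c_{n,m}\neq 0\}$. The homomorphism identities $\mathcal H(\alpha\cdot v_n) = \alpha\cdot\mathcal H(v_n)$, as $\alpha$ ranges over $Q_1$, translate into a system of $\mathcal K$-linear relations among the $c_{n,m}$ whose combinatorial shape is encoded by the arrows and edges of $\mathcal N[1]$, and, after lifting through the cover $\pi$, by those of $\mathcal N[2]$.

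If $\mathcal H = 0$, the empty sum suffices. Otherwise I pick $(n^*,m^*)\in\mathrm{supp}(\mathcal H)$ and consider its lift $(n^*,m^*,+1)\in\mathcal N[2]_0$. The central step is to produce a generalised graph map $\mathcal G$ containing $(n^*,m^*,+1)$ such that $w_{m^*}$ appears with coefficient $+1$ in $\mathcal H_{\mathcal G}(v_{n^*})$. I would build $\mathcal G$ greedily: starting from $\{(n^*,m^*,+1)\}$, I repeatedly adjoin the vertices forced by the completeness condition (\Cref{defn: complete subnetwork}) along arrows and edges of $\mathcal N[2]$, making the unique admissible lift choices dictated by the blocked walks recorded in $\mathcal R[2]$ so as to preserve $\mathcal R[2]$-freeness (\Cref{defn: Rj free}). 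Because $\mathcal N[2]$ is a $2$-cover of $\mathcal N[1]$, only two lifts are ever available at each branching, and the sign-flip data pins down which one to include.

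With $\mathcal G$ in hand I form $\mathcal H' := \mathcal H - c_{n^*,m^*}\mathcal H_{\mathcal G}$ and check that $(n^*,m^*)\notin\mathrm{supp}(\mathcal H')$. To convert this into a genuine induction I would equip $\mathcal P_0$ with a total order compatible with a rooted traversal of $T^1$ and $T^2$, always choose $(n^*,m^*)$ to be maximal in $\mathrm{supp}(\mathcal H)$, and show that every pair whose coefficient is altered by the subtraction lies strictly below $(n^*,m^*)$ in this order. The fact that $\mathcal H_{\mathcal G}$ assigns each pair a coefficient in $\{-1,0,+1\}$ on the canonical bases keeps this bookkeeping tractable, and iterating yields the required expression $\mathcal H = \sum_{l=1}^N \mu_l \mathcal H_{\mathcal G_l}$ in finitely many steps.

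The main obstacle I anticipate is guaranteeing that the greedy construction terminates in a genuine generalised graph map rather than accidentally producing a ghost. A priori, completeness-driven propagation could force $\mathcal G$ to contain both lifts $(n,m,+1)$ and $(n,m,-1)$ over enough pullback vertices that $\mathcal H_{\mathcal G} = 0$, as in \Cref{exmp: ghost}; in such a scenario the subtraction would not reduce the support, and the induction would stall. This is precisely where the ghost-free hypothesis is used in an essential way: it forbids, globally on $\mathcal N[2]$, any non-empty connected complete $\mathcal R[2]$-free subnetwork from being algebraically invisible, so every $\mathcal G$ produced by the construction is algebraically visible, and subtracting a non-zero multiple of $\mathcal H_{\mathcal G}$ honestly removes $(n^*,m^*)$ from the support.
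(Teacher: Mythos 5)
Your top-level strategy (induct on $|\R{supp}(\C H)|$, extract one generalised graph map supported inside $\R{supp}(\C H)$, subtract a suitable multiple, and invoke ghost-freeness to guarantee progress) is the same as the paper's, but two essential steps are missing, and the second of your two patches would not work as stated.

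First, your greedy construction gives no reason why the resulting $\C G$ should satisfy $\pi(\C G_0)\subseteq\R{supp}(\C H)$. Completeness-driven propagation from $(n^*,m^*,+1)$ forces vertices in both directions (incoming arrows of $T^1$ and outgoing arrows of $T^2$), and without knowing in advance that $\ang{\pi^{-1}(\R{supp}(\C H))}$ is itself a complete subnetwork, the forced vertices can leave the support; the subtraction $\C H-c_{n^*,m^*}\C H_{\C G}$ would then \emph{enlarge} the support and the induction on its cardinality collapses. The paper avoids this by first proving that the support of any homomorphism is complete (\Cref{prop: support is complete}) and then carving the generalised graph map out of $\pi^{-1}(\R{supp}(\C H))$ via \Cref{prop: complete R free from complete involution invariant} and \Cref{lem}, so that $\C G_0\subseteq\R{supp}(\C H)\times\{-1,1\}$ holds by construction. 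Your fallback of totally ordering $\C P_0$ and claiming that every altered coefficient lies strictly below $(n^*,m^*)$ is unjustified (propagation is bidirectional, so no rooted-traversal order controls it) and in any case does not rescue an induction whose measure is the \emph{size} of the support.

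Second, ghost-freeness does less than you ask of it. A ghost is a connected complete $\C R[2]$-free subnetwork that is involution-\emph{invariant}; ghost-freeness only guarantees that some connected component of the carved-out network contains a vertex $(n_0,m_0,j_0)$ whose partner $(n_0,m_0,-j_0)$ is absent. It does not prevent that component from containing \emph{some} involution pairs, so it need not be involution-free, and hence need not be a generalised graph map, and the coefficient of $w_{m^*}$ in $\C H_{\C G}(v_{n^*})$ could still vanish at other support positions you need to control. The paper's \Cref{lem} performs a genuine surgery here: it re-chooses the perfect matchings inside the $\C R[2]$-systems so that the network satisfies Property $(P)$, and only then does the connected component of $(n_0,m_0,j_0)$ become involution-free, complete and $\C R[2]$-free. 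This matching modification, together with the completeness of the support, is the real content of the proof, and both are absent from your proposal.
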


We ask in \Cref{ques: main1} whether the ghost-free hypothesis could be dropped from the above theorem. Using the above theorem, we give a checkable sufficient condition for the indecomposability of some generalised tree modules.

\begin{restatable}{theom}{maintwo}\label{thm: main 2}
Suppose $M:=F_\lambda(V_T)$ is a generalised tree module such that the pair $(M,M)$ is ghost-free. Suppose the following conditions hold:
\begin{enumerate}
    \item for every subtree of $T$ of the form $n_1\xleftarrow{a}n\xrightarrow{b}n_2$ or $n_1\xrightarrow{a}n\xleftarrow{b}n_2$ with $n_1\neq n_2$ and satisfying $F(a)=F(b)$, and for every generalised graph map $\C G$ from $M$ to $M$, we have that $(n_1,n_2,j)$ is not a vertex of $\C G$ for any $j\in\{-1,1\}$; and

    \item for $n_1\neq n_2$ in $T_0$, if there exists a generalised graph map $\C G$ such that $(n_1,n_2,j)$ is a vertex of $\C G$ for some $j\in\{-1,1\}$, then there does not exist a generalised graph map $\C G'$ containing the vertex $(n_2,n_1,j')$ for some $j'\in\{-1,1\}$.
\end{enumerate}
Then $M$ is indecomposable.
\end{restatable}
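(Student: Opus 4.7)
The plan is to verify the criterion of \Cref{prop: indecomposability condn assem}: I must show that every idempotent $e\in\R{End}_\Lambda(M)$ coincides with $\B 0$ or $\B 1_M$. Since $(M,M)$ is ghost-free, \Cref{thm: main 1} lets me write $e=\sum_l\mu_l\C H_{\C G_l}$ for some $\mu_l\in\C K$ and generalised graph maps $\C G_l$ from $M$ to $M$. Setting $c_{n,m}\in\C K$ to be the coefficient of $v_m$ in $e(v_n)$, I see from the formula $\C H_{\C G_l}(v_n)=\sum_{(n,m,j)\in(\C G_l)_0}jv_m$ that $c_{n,m}=0$ whenever no $\C G_l$ contains a vertex of the form $(n,m,j)$.

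The first step uses condition~(2) to control the diagonal. For $n\neq m$, a nonzero $c_{n,m}$ forces some $\C G_l$ to contain $(n,m,j)$ for some $j$, and condition~(2) then forbids $(m,n,j')$ from being a vertex of any generalised graph map, whence $c_{m,n}=0$. Thus $c_{n,m}c_{m,n}=0$ for every $n\neq m$, and the $(n,n)$-entry of the idempotency relation $e^2=e$ reduces to $c_{n,n}^2=c_{n,n}$, giving $c_{n,n}\in\{0,1\}$.

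The second step, which I expect to be the main obstacle, is to show that this diagonal value is constant on $T_0$. Fix an arrow $a:n\to n'$ in $T_1$, set $\alpha:=F(a)$, and compare the coefficient of $v_{n'}$ on both sides of $e(\alpha\cdot v_n)=\alpha\cdot e(v_n)$ using the action formula $\alpha\cdot v_k=\sum_{b\in T_1,\,s(b)=k,\,F(b)=\alpha}v_{t(b)}$ from \Cref{rmk: Flambda functor}. The left-hand coefficient expands as $\sum_{b:\,s(b)=n,\,F(b)=\alpha}c_{t(b),n'}$; for each $b\neq a$ in this sum the tree structure of $T$ forces $t(b)\neq n'$, and the subtree $n'\xleftarrow{a}n\xrightarrow{b}t(b)$ is of the first form in condition~(1), so $c_{t(b),n'}=0$, leaving only the $b=a$ contribution $c_{n',n'}$. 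Symmetrically, the right-hand coefficient $\sum_{b:\,t(b)=n',\,F(b)=\alpha}c_{n,s(b)}$ collapses to $c_{n,n}$ via condition~(1) applied to the subtrees $n\xrightarrow{a}n'\xleftarrow{b}s(b)$ for $b\neq a$. Hence $c_{n,n}=c_{n',n'}$ for every arrow of $T$, and connectedness of $T$ propagates the equality throughout $T_0$.

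Combining the two steps, the diagonal of $e$ is identically $0$ or identically $1$. Since $e^2=e$, its minimal polynomial divides the separable polynomial $x(x-1)$, so $e$ is diagonalisable over $\C K$ with eigenvalues in $\{0,1\}$ and $\R{tr}(e)=\R{rank}(e)$. If the diagonal is identically $0$ then $\R{tr}(e)=0$ forces $e=\B 0$; if it is identically $1$ then $\R{tr}(e)=\dim_{\C K}M$ forces $e$ to be surjective, and idempotency then gives $e=\B 1_M$. The delicate point is the bookkeeping in the second step: one must confirm that the ``spurious'' cross terms in the expansions of $e(\alpha\cdot v_n)$ and $\alpha\cdot e(v_n)$ arise precisely from the fan and cofan subtrees forbidden by condition~(1), which relies on the tree structure to guarantee that distinct arrows from (resp.\ to) a common vertex have distinct opposite endpoints.
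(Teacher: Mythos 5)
Your proposal is correct and follows essentially the same route as the paper: reduce to idempotents via \Cref{prop: indecomposability condn assem}, use \Cref{thm: main 1} to translate Conditions (1) and (2) into vanishing statements about the coefficients $\mu^{\C H}_{n_1n_2}$ (the paper's \Cref{rmk: hyp1 thB equiv} and \Cref{rmk: hyp2 thB equiv}), deduce that each diagonal coefficient is idempotent, propagate equality of diagonal entries along the arrows of $T$ using Condition (1), and conclude with the trace-equals-rank argument. The only cosmetic difference is that the paper first normalises $\mu^{\C I}_{n_0n_0}=1$ at one vertex (replacing $\C I$ by $\B 1_M-\C I$ if necessary) and then propagates the value $1$ through $T$ in two directional cases, whereas you establish $c_{n,n}\in\{0,1\}$ at every vertex and then prove constancy of the diagonal by a single symmetric computation for each arrow $a\colon n\to n'$.
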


For a subclass of the class of generalised tree modules, we provide a sufficient condition (\Cref{thm: conv main2}) for decomposability, and conjecture (\Cref{conj}) that any generalised tree module satisfying this condition is decomposable. An affirmative answer to \Cref{ques: main1} as well as \Cref{conj} would provide a necessary and sufficient condition for the indecomposability of an arbitrary generalised tree module.

For a quiver $Q$ and an indecomposable $M\in\C KQ\text{-}\R{mod}$, Ringel \cite{ringelexceptional} showed that if $M$ is exceptional, i.e. $\R{Ext}^1_{\C KQ}(M,M)=0$, then $M$ is isomorphic to a generalised tree module. It is known  that all indecomposable modules over a Dynkin quiver are exceptional, and hence are generalised tree modules by the above theorem. For a dimension vector $\overline{d}$ of an indecomposable module over a Dynkin quiver of type $\B D$, we give in \S~\ref{sec:Dn indecomposable} an explicit construction of a generalised tree module $M(\overline{d})$ with $\overline{\dim}(M(\overline{d}))=\overline{d}$ and, as an application of \Cref{thm: main 2}, we show that $M(\overline{d})$ is indecomposable.

The paper is organised as follows. In \S~\ref{sec: cov network}, we introduce the concept of a network, define networks $\C N[1]$ and $\C N[2]$ associated with a pair of generalised tree modules, and discuss their properties. Generalised graph maps are introduced in \S~\ref{sec: ggm}. The proof of \Cref{thm: main 1} is complete by the end of \S~\ref{sec: main}, and it relies on a key lemma that occupies most of \S~\ref{sec: R2 free from complete}. \Cref{thm: main 2} along with a result in the direction of its converse are proved in \S~\ref{sec: indecomposability}. Applications of the results from \S~\ref{sec: indecomposability} to modules over Dynkin quivers of type $\B D$ are given in \S~\ref{sec:Dn indecomposable}.

\section{Networks associated with a pair of generalised tree modules}\label{sec: cov network}
Throughout the rest of the paper, we assume that $T^1$ and $T^2$ are trees, $F_1:T^1\to(Q,\rho)$, $F_2:T^2\to(Q,\rho)$ are bound quiver morphisms, and $M_1:={F_{1\lambda}}(V_{T^1})$ and $M_2:={F_{2\lambda}}(V_{T^2})$ are generalised tree modules with natural $\C K$-bases $\{v_n\}_{n\in T^1_0}$ and $\{w_m\}_{m\in T^2_0}$ respectively.

In this section, we define some structures that will help us to generalise the definition of a graph map between two tree modules. The relaxation of Condition \ref{tree module condition} in the definition of a tree module forces the existence of some homomorphisms which are different from the ones appearing in the basis of the Hom-set between two tree modules as described in \Cref{thm: c-b}. Such a sign-flip homomorphism was shown in \Cref{exmp: motivation of flip}. We want to look at the assignments of the basis elements in a homomorphism, as in that example, as pairs of basis elements, which will be vertices of some ``network'' associated with a pair of generalised tree modules; we introduce this concept now.

\begin{defn}\label{defn: network}
A \emph{network} $\C N$ is defined as the pentuple $(\C N_0,\C N_1,\sigma,\tau,\C E)$, where $(\C N_0,\C N_1,\sigma,\tau)$ is a quiver and $(\C N_0,\C E)$ is a simple undirected graph.    
\end{defn}

The same concept could be found under the name of \emph{mixed graph} introduced by Harary and Palmer (see \cite{harary1966enumeration}). A subnetwork $\C M$ of $\C N$ is \emph{connected} if the underlying graph of $\C M$ is connected. Given a set of vertices $\C V\subseteq\C N_0$, denote by $\ang{\C V}$ the induced subnetwork of $\C N$ with the set of vertices $\C V$.

Borrowing motivation from strings in a string algebra (see \cite[\S~2]{sengupta2024characterisation}), we define \emph{traversals} in a network $\C N$. For each $\alpha\in\C N_1$, we introduce a formal inverse $\alpha^{-1}$ and extend the functions $\sigma$ and $\tau$ so that $\sigma(\alpha^{-1}):=\tau(\alpha)$ and $\tau(\alpha^{-1}):=\sigma(\alpha)$. Let $\C N_1^{-1}:=\{\alpha^{-1}\mid\alpha\in\C N_1\}$. We will refer to the elements of the set $\C N_1\sqcup\C N_1^{-1}\sqcup\C E$ as \emph{links}.

\begin{defn}
A sequence of links $\f t:=\alpha_n\cdots\alpha_1$ is a \emph{traversal} (of length $n\geq1$) if the following conditions hold:
\begin{itemize}
    \item for all $1\leq i<n$, if $\alpha_i,\alpha_{i+1}\in\C N_1\sqcup\C N_1^{-1}$, then $\tau(\alpha_i)=\sigma(\alpha_{i+1})$;

    \item for all $1\leq i<n$, if $\alpha_i\in\C N_1\sqcup\C N_1^{-1}$ and $\alpha_{i+1}\in\C E$, then $\tau(\alpha_i)\in\alpha_{i+1}$;

    \item for all $1\leq i<n$, if $\alpha_{i+1}\in\C N_1\sqcup\C N_1^{-1}$ and $\alpha_i\in\C E$, then $\sigma(\alpha_{i+1})\in\alpha_{i}$;

    \item for all $1\leq i<n$, if $\alpha_i,\alpha_{i+1}\in\C E$, then $|\alpha_i\cap\alpha_{i+1}|=1$;

    \item for all $1\leq i<n-1$, if $\alpha_i,\alpha_{i+1},\alpha_{i+2}\in\C E$, then $\alpha_i\cap\alpha_{i+1}\cap\alpha_{i+2}=\emptyset$;

    \item for all $1\leq i<n-1$, if $\alpha_i\in \C N_1\sqcup\C N_1^{-1}$ and $\alpha_{i+1},\alpha_{i+2}\in\C E$, then $\tau(\alpha_i)\notin\alpha_{i+1}\cap\alpha_{i+2}$;

    \item for all $1\leq i<n-1$, if $\alpha_i,\alpha_{i+1}\in\C E$ and $\alpha_{i+2}\in \C N_1\sqcup\C N_1^{-1}$, then $\sigma(\alpha_{i+2})\notin\alpha_i\cap\alpha_{i+1}$;

    \item for all $1\leq i<n$, if $\alpha_i,\alpha_{i+1}\in\C N_1\sqcup\C N_1^{-1}$, then $\alpha_{i+1}\neq\alpha_i^{-1}$.
\end{itemize}
In addition to the above, we also refer to the vertices in $\C N_0$ as \emph{traversals} (of length $0$). Define $\alpha^{-1}:=\alpha$ for $\alpha\in\C E\cup\C N_0$. We will denote the length of a traversal $\f t$ by $|\f t|$. Also, we will denote the set of traversals by $\f T(\C N)$.
\end{defn}

\begin{rmk}\label{rmk: inv of a traversal}
It is simple to observe that if $\f t:=\alpha_n\cdots\alpha_1\in\f T(\C N)$, then $\f t^{-1}:=\alpha_1^{-1}\cdots\alpha_n^{-1}\in\f T(\C N)$.
\end{rmk}

Suppose $\f t\in\f T(\C N)$ and assume $\f{t}=\alpha_n\cdots\alpha_1$ if $|\f t|=:n>0$. We associate sets $\f{s}(\f{t})$ and $\f{e}(\f{t})$ of starting and ending vertices respectively defined as follows:
\begin{equation*}
    \f{e}(\f{t}):=
    \begin{cases}
        \{\f t\} &\text{ if } |\f t|=0,\\
        
        \f{t}&\text{ if }\f{t}\in\C E,\\
        
        \{\tau(\alpha_n)\}&\text{ if }\alpha_n\in\C N_1\sqcup\C N_1^{-1},\\
        
        \alpha_n\setminus\alpha_{n-1}&\text{ if }\alpha_{n-1},\alpha_n\in\C E,\\
        
        \alpha_n\setminus\{\tau(\alpha_{n-1})\}&\text{ if }\alpha_{n-1}\notin\C E,\alpha_n\in\C E,        
    \end{cases}
\end{equation*}
and $\f{s}(\f{t}):=\f{e}(\f{t}^{-1})$. We say that $\f{t}$ is a traversal from $(v,i)$ to $(v',j)$ if $(v,i)\in\f{s}(\f{t})$, $(v',j)\in\f{e}(\f{t})$, and $(v,i)\neq(v',j)$ if $\f{t}\in\C E$. It is convenient to exhibit a traversal with the help of a diagram of links instead of the sequence, where the notation $``--"$ will be used to denote a link. For example, a traversal $\alpha_3\alpha_2\alpha_1$ where $\alpha_1\in\C N_1$, $\alpha_2\in\C E$ and $\alpha_3\in\C N_1^{-1}$ can be expressed as $\tau(\alpha_1)\xrightarrow{\alpha_1^{-1}}\sigma(\alpha_1)\stackrel{\alpha_2}{\text{ ----- }}\tau(\alpha_3)\xleftarrow{\alpha_3}\sigma(\alpha_3).$

\begin{figure}[H]
    \[\begin{tikzcd}
1 \arrow[r, no head] \arrow[d, "a"'] & 2 \arrow[d, "b"] \\
3 \arrow[r, no head]                 & 4               
\end{tikzcd}\]
    \caption{A network $\C N$ used in \Cref{exmp: traversal network}}
    \label{fig:traversal network}
\end{figure}

\begin{exmp}\label{exmp: traversal network}
Consider the network $\C N$ in \Cref{fig:traversal network}. The  diagram $1\xrightarrow{a}3\text{ --- }4\xleftarrow{b}2\text{ --- }1$ shows a traversal from the vertex $1$ to itself in $\C N$.
\end{exmp}

Now we describe the construction of the \emph{pullback network} associated with the pair $(M_1,M_2)$ of generalised tree modules. In the category of quivers, consider the quiver $\C P$ which is the pullback as shown in \Cref{fig: pullback}.
\begin{figure}[H]
    \[\begin{tikzcd}
\C P \arrow[r, "\pi_1"] \arrow[d, "\pi_2"'] \arrow[dr, phantom, "\scalebox{1.5}{$\lrcorner$}", very near start] & T^1 \arrow[d, "F_1"] \\
T^2 \arrow[r, "F_2"']                                &  Q                
\end{tikzcd}\]
    \caption{Pullback diagram used in the definition of pullback network}
    \label{fig: pullback}
\end{figure}

Explicitly, the quiver $\C P:=(\C P_0,\C P_1,s,t)$ is defined by
\begin{align*}
\C P_0&:=\{(n,m)\in T^1_0\times T^2_0\mid F_1(n)=F_2(m)\};\text{ and}\\
\C P_1&:=\{(n,m)\xrightarrow{(a,b)}(n',m')\mid(n\xrightarrow{a}n')\in T^1_1,(m\xrightarrow{b}m')\in T^2_1, F_1(a)=F_2(b)\}.
\end{align*}
There is a quiver morphism $F:\C P\to Q$ defined by $F((x,y)):=F_1(x)=F_2(y)$ for $(x,y)\in\C P_0\cup\C P_1$. Recall from Example \ref{exmp: motivation of flip} that there is a basis homomorphism containing the assignments $v_2\mapsto w_4$ and $v_3\mapsto-w_4$. We wish to ``connect'' the vertices $(2,4)$ and $(3,4)$ via an (undirected) edge to encode this ``sign-flip'' homomorphism. Therefore, we define the following set of edges:
\begin{align*}
    \C E^1&:=\{\{(n,m),(n',m)\}\mid\text{ there are }(n\xleftarrow{a}n''),(n''\xrightarrow{b}n')\text{ in }T^1_1\text{ with }n\neq n',F_1(a)=F_1(b)\}\\&\cup
    \{\{(n,m),(n,m')\}\mid\text{ there are }(m\xrightarrow{c}m''),(m''\xleftarrow{d}m')\text{ in }T^2_1\text{ with }m\neq m',F_2(c)=F_2(d)\}.
\end{align*}
The network $(\C P,\C E^1)$ will be called the \emph{pullback network} associated with the pair $(M_1,M_2)$ of generalised tree modules. We will use the notation $\C N[1]:=(\C N[1]_0,\C N[1]_1,s^1,t^1,\C E^1)$ to denote this network--the choice of notation will be clear by the end of this section. \Cref{fig:situation edge N1} shows both the situations mentioned in the definition of $\C E^1$.
\begin{figure}[H]
    % First subfigure
    \begin{subfigure}{0.7\textwidth} % 45% of text width
        \[\begin{tikzcd}
   & n'' \arrow[ld, "a"'] \arrow[rd, "b"] &     &  &                               &  &            \\
n &                                    & n' & m               & {(n,m)} \arrow[rr, no head] &                                                      & {(n',m)}
\end{tikzcd}\]
        \caption{$F_1(n)=F_1(n')=F_2(m)$}
        \label{fig:situation edge N1 1}
    \end{subfigure}
    \\
    % Second subfigure
    \begin{subfigure}{0.7\textwidth}
        \[\begin{tikzcd}
n & m \arrow[rd, "c"'] &   & m' \arrow[ld, "d"] & {(n,m)}  \arrow[rr, no head] &  & {(n,m')} \\
  &  & m'' &  &  & &      
\end{tikzcd}\]
    \caption{$F_1(n)=F_2(m)=F_2(m')$}
        \label{fig:situation edge N1 2}
    \end{subfigure}

    \caption{Two situations showing the existence of an edge in the pullback network, where portions of $T^1$, $T^2$ and $\C N[1]$ are being shown respectively from left to right.}
    \label{fig:situation edge N1}
\end{figure}
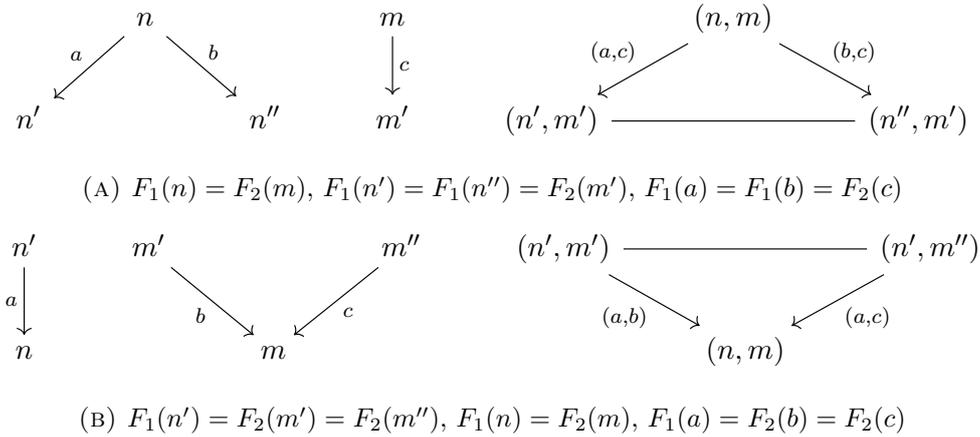

We mention below some observations about $\C N[1]$.

\begin{rmk}\label{rmk: atmost one arrow or edge between two vertices}
Since $T^1$ and $T^2$ are trees, there is at most one link from one vertex to another in $\C N[1]$, and there does not exist a directed cycle of arrows in $\C N[1]$.
\end{rmk}

To state the next remark, we need some preliminaries on alphabets and words. One may look at \cite[Chapter 1.0]{lothaire1997combinatorics} for the definition of an alphabet and a word. Let $\s A:= Q_1\cup Q_1^{-1}$ be an alphabet. Denote the empty $\s A$-word by $e$. An $\s A$-word $\s w$ is said to be \emph{reduced} if there is no subword $\alpha\alpha^{-1}$ or $\alpha^{-1}\alpha$ of $\s w$. Given an $\s A$-word $\s w$, one can obtain a reduced word $\s w'$ by repeatedly replacing $\alpha\alpha^{-1}$ or $\alpha^{-1}\alpha$ with the empty word.
\begin{rmk}\label{rmk: traversal in N1 has a unique string}
Given a traversal $\f t:=\alpha_n\cdots\alpha_1$ in $\C N[1]$, an $\s A$-word can be associated with $\f t$ in the following way: $\f W(\f t):=e$ if $|\f t|=0$, otherwise $\f W(\f t):=\f W(\alpha_n)\cdots\f W(\alpha_1)$, where $\f W(\alpha_i):=\begin{cases}
    F(\alpha_i) &\text{ if }\alpha_i\in\C N[1]\cup\C N[1]^{-1};\\
    e &\text{ otherwise.}
\end{cases}$
Let $\f W^\R{red}(\f t)$ denote the reduced word obtained from $\f W(\f t)$. Let $\f t_1$ and $\f t_2$ be traversals with $\f s(\f t_1)=\f s(\f t_2)$ and $\f e(\f t_1)=\f e(\f t_2)$. Then $\f W^\R{red}(\f t_1)=\f W^\R{red}(\f t_2)$, for otherwise there would be a cycle in the underlying graphs of both $T^1$ and $T^2$, a contradiction to them being trees.
\end{rmk}

\begin{rmk}\label{rmk: E1 asymmetric}
The definition of $\C E^1$ is inherently asymmetric. To understand this, let $\C P^{12}$ and $\C P^{21}$ denote the pullback quivers used as arguments while constructing the pullback networks associated with the pairs $(M_1,M_2)$ and $(M_2,M_1)$ respectively. Then the vertex assignment $(n,m)\mapsto(m,n)$ induces an isomorphism between $\C P^{12}$ and $\C P^{21}$. However, this isomorphism does not necessarily induce an isomorphism between the respective pullback networks for it may not induce an isomorphism between the sets of edges as shown in \Cref{exmp: E1 asymmetric}.
\end{rmk}

\begin{exmp}\label{exmp: E1 asymmetric}
Consider $T^1$, $T^2$ and $\C N[1]$ as shown in \Cref{fig:situation edge N1 1}, where $\{(n',m'),(n'',m')\}\in\C E^1$, whereas the pullback network associated with the pair $(M_2,M_1)$ does not contain the edge $\{(m',n'),(m',n'')\}$.
\end{exmp}

A \emph{triangle} is a subnetwork of $\C N[1]$ containing three vertices whose underlying graph is a clique. Denote the set of triangles in $\C N[1]$ by $\triangle$. In view of \Cref{rmk: atmost one arrow or edge between two vertices}, given a collection $\C V$ of three vertices in $\C N[1]$, there is at most one triangle in $\ang{\C V}$. Recall from \Cref{rmk: traversal in N1 has a unique string} that the associated $\s A$-word for an edge and an arrow is the empty word and a non-empty word respectively. This guarantees that there is always an odd number of edges in a triangle. There are only four types of triangles; all types are shown in \Cref{fig:triangle}.

\begin{figure}[H]
\centering
\begin{subfigure}{0.3\linewidth}
\begin{tikzcd}[column sep=small]
                               & {(n,m)} \arrow[ld, "{(a,b)}"'] \arrow[rd, "{(c,b)}"] &           \\
{(n'',m')} \arrow[rr, no head] &                                                      & {(n',m')}
\end{tikzcd}
\caption{}
\label{subfig: triangle type I}
\end{subfigure}
\hspace{2mm}
\begin{subfigure}{0.3\linewidth}
\begin{tikzcd}[column sep=small]
& {(n,m)} &                                   \\
{(n',m')} \arrow[ru, "{(a,b)}"] \arrow[rr, no head] &         & {(n',m'')} \arrow[lu, "{(a,c)}"']
\end{tikzcd}
\caption{}
\label{subfig: triangle type II}
\end{subfigure}\\
\vspace{5mm}
\begin{subfigure}{0.3\linewidth}
\begin{tikzcd}[column sep=small]
                             & {(n,m)} \arrow[ld, no head] \arrow[rd, no head] &           \\
{(n,m')} \arrow[rr, no head] &                                                 & {(n,m'')}
\end{tikzcd}
\caption{}
\label{subfig: triangle type III}
\end{subfigure}
\hspace{2mm}
\begin{subfigure}{0.3\linewidth}
\begin{tikzcd}[column sep=small]
                             & {(n,m)} \arrow[ld, no head] \arrow[rd, no head] &           \\
{(n',m)} \arrow[rr, no head] &                                                 & {(n'',m)}
\end{tikzcd}
\caption{}
\label{subfig: triangle type IV}
\end{subfigure}

    \caption{Four types of triangles}
    \label{fig:triangle}
\end{figure}
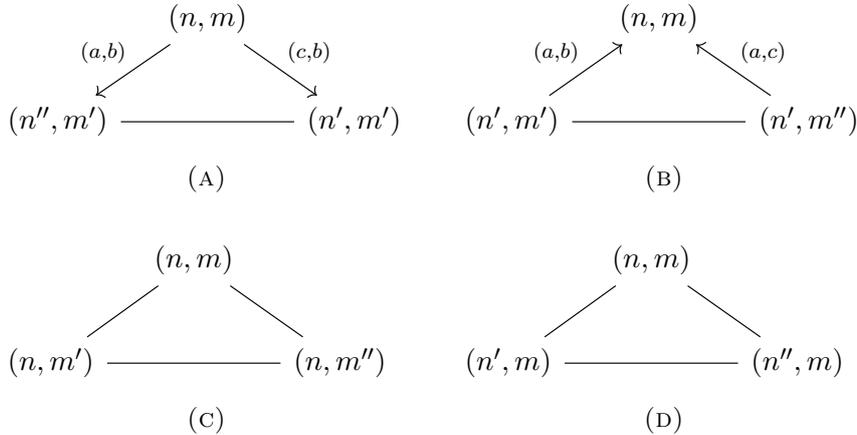

We will see now that these triangles have an interesting structure, a useful result which will be used in \S~\ref{sec: R2 free from complete}.

\begin{prop}\label{prop: triangle structure 1}
Let $\C V_1:=\{(n_1,m_1),(n_2,m_2),(n_3,m_3)\}$, $\C V_2:=\{(n_1,m_1),(n_2,m_2),(n_4,m_4)\}$ be subsets of $\C N[1]_0$ such that $\ang{\C V_1},\ang{\C V_2}\in\triangle$. Then there exists a link from $(n_3,m_3)$ to $(n_4,m_4)$.
\end{prop}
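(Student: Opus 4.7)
Let $u := (n_1, m_1)$, $v := (n_2, m_2)$, $x := (n_3, m_3)$, and $y := (n_4, m_4)$. The plan is to exploit the fact that, by \Cref{rmk: atmost one arrow or edge between two vertices}, there is a unique link $\ell$ between $u$ and $v$ in $\C N[1]$; this link is a common side of both triangles $\ang{\C V_1}$ and $\ang{\C V_2}$. I will do a case analysis on what $\ell$ is (a directed arrow, an edge of type $\C E^1$ coming from $T^1$, or an edge of type $\C E^1$ coming from $T^2$) and, within each case, on the types (A)--(D) from \Cref{fig:triangle} that the two triangles can take around $\ell$.

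The geometry of each triangle type constrains the third vertex once $u,v$ and $\ell$ are fixed. In type (A), the side $\ell$ is one of the two arrows, forcing the third vertex to be the common source (an element of $\C P_0$ whose second component matches that of $u$ via the common $T^2$-arrow appearing in both arrow-labels); similarly, type (B) forces the common target. Types (C) and (D) need a small reduction: here all three vertices share a first or second coordinate, and using the tree property of $T^2$ or $T^1$ one checks that the three edges of the triangle must in fact be witnessed by a \emph{single} common successor (resp.\ predecessor) in the tree, for otherwise distinct witnesses would create a genuine cycle in the underlying undirected graph of $T^2$ (resp.\ $T^1$).

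In each resulting combination of types for the two triangles I will construct the required link from $x$ to $y$ explicitly: either an arrow in $\C N[1]_1$, by exhibiting matching arrows in $T^1$ and $T^2$ with equal images under $F$, or an edge in $\C E^1$, by exhibiting a common predecessor in $T^1$ or common successor in $T^2$ whose incident arrows share an $F$-image. The identity $F_1(-)=F_2(-)$ provided by the sides of the triangles incident with $\ell$ propagates cleanly to the new link. To illustrate: if $\ell$ is an arrow $u\to v$ via $(a,b)$ and both triangles have type (A), then $x=(n_3,m_2)$ and $y=(n_4,m_2)$ with arrows $n_1\xrightarrow{a'}n_3,\ n_1\xrightarrow{a''}n_4$ in $T^1$ satisfying $F_1(a)=F_1(a')=F_1(a'')$, whence $\{x,y\}\in\C E^1$; in the mixed (A)--(B) case one instead produces a new arrow between $x$ and $y$; and so on for the remaining combinations.

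The main obstacle is the bookkeeping in this enumeration rather than any deep idea, and the recurring subtlety is the potential appearance of multiple distinct witnesses in $T^1$ or $T^2$ when the data of the two triangles is glued together. In each such instance I expect to argue that two distinct witnesses, taken together with the common vertices $u$ and $v$ (and possibly $x$ or $y$), would assemble into a nontrivial cycle in the underlying graph of $T^1$ or $T^2$, contradicting the tree assumption; this forces the witnesses to coincide and yields the desired link between $x$ and $y$.
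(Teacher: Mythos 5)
Your proposal is correct and follows essentially the same route as the paper: both proofs fix the unique shared link between $(n_1,m_1)$ and $(n_2,m_2)$, run a case analysis over the types of the two triangles from \Cref{fig:triangle}, and in each case exhibit the link from $(n_3,m_3)$ to $(n_4,m_4)$ explicitly, using the tree property of $T^1$ or $T^2$ to force a single common witness (the paper's vertex $\overline m$ in its Case II plays exactly the role of your ``single common successor''). The paper likewise writes out only two representative cases and declares the rest similar, so your level of completeness matches its own.
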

\begin{proof}
We mention the proofs of two cases; the proofs for the rest of the cases are similar.

\textbf{Case I.} Suppose $\ang{V_1}$ and $\ang{\C V_2}$ are triangles of types A and B respectively (see \Cref{fig:triangle}), and share a common arrow.

As shown in \Cref{fig: case 1 triangle structure 1}, we have $(n_1,m_1)=(n,m)$, $(n_2,m_2)=(n'',m')$, $(n_3,m_3)=(n',m')$ and $(n_4,m_4)=(n,m'')$ with $n,n',n'',m,m''$ pairwise distinct. It is obvious to see that the arrows $n\xrightarrow{a}n'$ and $m''\xrightarrow{d}m'$ satisfy $F_1(a)=F_2(d)$, thus implying the existence of the arrow $(n,m'')\xrightarrow{(a,d)}(n',m')$.\\

\textbf{Case II.} Suppose $\ang{V_1}$ and $\ang{\C V_2}$ are triangles of type C (see \Cref{fig:triangle}).

As shown in \Cref{fig: case 2 triangle structure 1}, we have $(n_1,m_1)=(n,m)$, $(n_2,m_2)=(n,m')$, $(n_3,m_3)=(n,m'')$ and $(n_4,m_4)=(n,m''')$ with $n,m,m',m'',m'''$ pairwise distinct. The definition of $\C E^1$ and the uniqueness of paths between two vertices in a tree forces the existence of a vertex $\overline m$ and arrows $a,b,c,d$ in $T^2$ with $F_2(a)=F_2(b)=F_2(c)=F_2(d)$ as shown in \Cref{fig: case 2 codomain triangle structure 1}, thereby concluding the existence of an edge between $(n,m'')$ and $(n,m''')$ in $\C N[1]$.
\end{proof}

\begin{figure}[H]
    \centering
    % First subfigure
    \begin{subfigure}{0.45\textwidth} % 45% of text width
        \[\begin{tikzcd}[row sep=huge]
{(n,m)} \arrow[d, "{(a,b)}"'] \arrow[rd, "{(c,b)}"'{pos=0.85,rotate=-35}] \arrow[r, no head] & {(n,m'')} \arrow[d, "{(c,d)}"] \arrow[ld, color=red, "{(a,d)}"'{pos=0.1,rotate=35}] \\
{(n',m')} \arrow[r, no head]                                           & {(n'',m')}                                          
\end{tikzcd}\]
        \caption{Case I}
        \label{fig: case 1 triangle structure 1}
    \end{subfigure}
    \hfill
    % Second subfigure
    \begin{subfigure}{0.45\textwidth}
        \[\begin{tikzcd}[row sep=huge]
{(n,m)} \arrow[d, no head] \arrow[rd, no head] \arrow[r, no head] & {(n,m''')} \arrow[d, no head] \arrow[ld, no head,color=red] \\
{(n,m'')} \arrow[r, no head]                                      & {(n,m')}                                         
\end{tikzcd}\]
        \caption{Case II}
        \label{fig: case 2 triangle structure 1}
    \end{subfigure}
    \caption{The structure of triangles used in the proof of \Cref{prop: triangle structure 1}}
    \label{fig:triangle structure 1}
\end{figure}
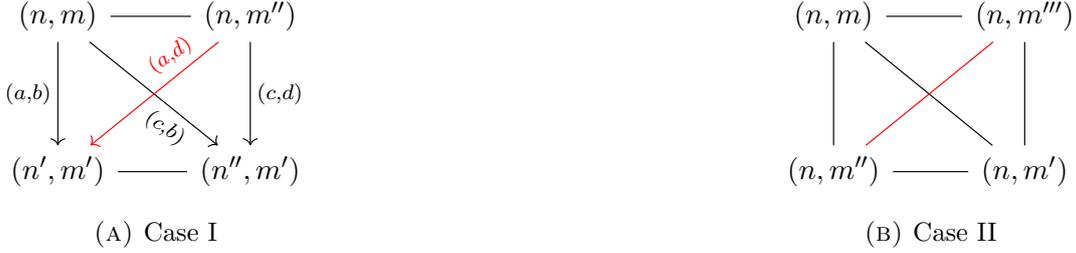

\begin{figure}[H]
    \[\begin{tikzcd}
m \arrow[rrrdd, "a"'] &  & m' \arrow[rdd, "b"] &       & m'' \arrow[ldd, "c"'] &  & m''' \arrow[llldd, "d"] \\
                     &  &                     &       &                      &  &                         \\
                     &  &                     & \overline m &                      &  &                        
\end{tikzcd}\]
    \caption{Subtree of $T^2$ used in Case II of \Cref{prop: triangle structure 1}}
    \label{fig: case 2 codomain triangle structure 1}
\end{figure}
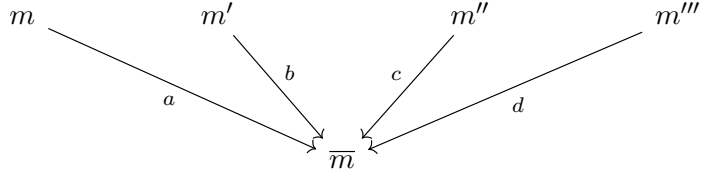

%It will be useful to define a set $\C R[1]$ of two-length traversals in $\C N[1]$. MOTIVATION?? Any two-length traversal that is a part of a triangle in $\C N[1]$ shall be an element of $\C R[1]$.

Define a relation $\sim$ on $\triangle$ as follows: given $\C T_1,\C T_2\in\triangle$, say that $\C T_1\sim\C T_2$ if they share a common link. Clearly, $\sim$ is reflexive and symmetric. Let $\approx$ be the transitive closure of $\sim$. Therefore, $\approx$ is an equivalence relation on $\triangle$. The following proposition shows an interesting property of an $\approx$-equivalence class.

\begin{prop}\label{prop: triangle structure complete graph}
Consider an $\approx$-equivalence class $\Gamma$. Denote by $\Gamma_0$ the set of vertices that appear in $\Gamma$. Then the underlying graph of $\ang{\Gamma_0}$ is a clique.
\end{prop}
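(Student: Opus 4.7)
The plan is to induct on the length of a $\sim$-chain and establish the slightly stronger statement: whenever $\C T^{(0)}\sim\C T^{(1)}\sim\cdots\sim\C T^{(N)}$ is a chain in $\triangle$, the set $\bigcup_{i=0}^N V(\C T^{(i)})$ (where $V(\C T)$ denotes the three-element vertex set of $\C T$) induces a clique in $\C N[1]$. Since any two triangles in $\Gamma$ are connected by such a chain by the definition of $\approx$, and since $\Gamma_0=\bigcup_{\C T\in\Gamma}V(\C T)$, proving this version of the statement suffices.

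The base case $N=0$ is immediate from the definition of a triangle. For the inductive step, I would write $V_{N-1}:=\bigcup_{i=0}^{N-1}V(\C T^{(i)})$ and assume it is already a clique. The adjacent triangles $\C T^{(N-1)}$ and $\C T^{(N)}$ share a common link by the definition of $\sim$, and by \Cref{rmk: atmost one arrow or edge between two vertices} this link has exactly two distinct endpoints $u_1,u_2$ which lie in $V_{N-1}\cap V(\C T^{(N)})$. Writing $V(\C T^{(N)})=\{u_1,u_2,x\}$, the case $x\in V_{N-1}$ is trivial; otherwise I need to produce a link from $x$ to every $v\in V_{N-1}$. When $v\in\{u_1,u_2\}$ the required link lies in $\C T^{(N)}$ itself. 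When $v\in V_{N-1}\setminus\{u_1,u_2\}$, the inductive hypothesis furnishes links $u_1\sim v$ and $u_2\sim v$, which together with $u_1\sim u_2$ (coming from $\C T^{(N)}$) force $\ang{\{u_1,u_2,v\}}$ to be a triangle in $\triangle$. This auxiliary triangle and $\C T^{(N)}=\ang{\{u_1,u_2,x\}}$ share the link between $u_1$ and $u_2$, so \Cref{prop: triangle structure 1} delivers the desired link between $v$ and $x$.

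The one conceptual subtlety worth flagging is that the auxiliary triangle $\ang{\{u_1,u_2,v\}}$ need not itself lie in the equivalence class $\Gamma$; this is harmless because \Cref{prop: triangle structure 1} only requires two elements of $\triangle$ sharing two vertices, not two $\approx$-equivalent triangles. Beyond this bookkeeping, the argument is driven entirely by the ``triangle-completion'' property supplied by \Cref{prop: triangle structure 1}, and I do not anticipate any further obstacle.
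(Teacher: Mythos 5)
Your proof is correct and follows essentially the same strategy as the paper's: induction along a $\sim$-chain of triangles, with \Cref{prop: triangle structure 1} supplying the new link at each step. Your formulation (strengthening the inductive statement to ``the union of the vertex sets along the chain is a clique'') is a slightly cleaner bookkeeping of the same idea, since it avoids the paper's need to argue that intermediate vertices admit strictly shorter connecting chains; the observation that the auxiliary triangle $\ang{\{u_1,u_2,v\}}$ need not lie in $\Gamma$ is correct and harmless, exactly as you note.
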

\begin{proof}
Let $(n,m)$ and $(n',m')$ be distinct vertices in $\Gamma_0$. We will show that there is a link from $(n,m)$ to $(n',m')$. There is a sequence
\begin{equation}\label{eqn: seq of triangles}
\C T^1\sim\C T^2\sim\cdots\sim\C T^L
\end{equation} of pairwise distinct  triangles in $\Gamma$ such that $(n,m)\in\C T^1_0\setminus\C T^2_0$ and $(n',m')\in\C T^L_0\setminus\C T^{L-1}_0$. We will prove the result by induction on the length $L$ of such a sequence.

\textbf{Base Case.} $L=1.$

Here, the triangle $\C T^1$ has $(n,m)$ and $(n',m')$ as two of its vertices. Therefore, there exists a link from $(n,m)$ to $(n',m')$.\\

\textbf{Induction Step.} Suppose the result is true for all $1\leq L\leq k$. Assume that $L=k+1$.

Note that for $(n'',m'')\in \bigcup_{l=1}^{k+1}\C T^l_0\setminus\{(n,m),(n',m')\}$, the length of a sequence of triangles for the pair $(n,m)$ and $(n'',m'')$ as in Sequence \eqref{eqn: seq of triangles} is less than $k+1$, and therefore, by the induction hypothesis, there exists a link from $(n,m)$ to $(n'',m'')$. Thus $\ang{\C V}\in\triangle$, where $\C V:=\{(n,m)\}\cup\C T^{(k+1)}_0\setminus\{(n',m')\}$. Since $\ang{\C V},\C T^{k+1}\in\triangle$ share a common link, \Cref{prop: triangle structure 1} yields a link from $(n,m)$ to $(n',m')$.
\end{proof}

Even though sign-flips are encoded using edges in $\C N[1]$, the signs are not ``visible'' in the network. To give them visibility, we define the \emph{$2$-covering network} associated with the pair $(M_1,M_2)$ of generalised tree modules.

\begin{defn}
The \emph{$2$-covering network} $\C N[2]:=(\C N[2]_0,\C N[2]_1,s^2,t^2,\C E^2)$ associated with a pair of generalised tree modules $(M_1,M_2)$ is defined as follows:
\begin{align*}
    \C N[2]_l&:=\C N[1]_l\times\{-1,1\}\text{ for }l\in\{0,1\},\\
    s^2((a,b,i))&:=(s^1(a,b),i)\text{ for }(a,b,i)\in\C N[2]_1,\\
    t^2((a,b,i))&:=(t^1(a,b),i)\text{ for }(a,b,i)\in\C N[2]_1,\\
    \C E^2&:=\{\{(n,m,i),(n',m',-i)\}\mid\{(n,m),(n',m')\}\in\C E^1, i\in\{-1,1\}\}.
\end{align*}
\end{defn}

\begin{rmk}
If $\R{char}(\C K)=2$, then $\C N[2]=\C N[1]$, and hence we assumed that $\R{char}(\C K)\neq2$.
\end{rmk}

The next remark explains why the $2$-covering network $\C N[2]$ is a cover of $\C N[1]$.

\begin{rmk}\label{rmk: proj between network}
It is trivial to note that there is a canonical projection $\pi:\C N[2]\to\C N[1]$ given by
\begin{equation*}
    \pi((x,y,i)):=(x,y)\text{ for every }(x,y,i)\in\C N[2]_0\cup\C N[2]_1,
\end{equation*}
satisfying $\pi\circ s^2=s^1\circ\pi$, $\pi\circ t^2=t^1\circ\pi$, and mapping $\C E^2$ to $\C E^1$ surjectively. Since $|\pi^{-1}(\alpha)|=2$ for each $\alpha\in\C N[1]_0\cup\C N[1]_1\cup\C E^1$, $\C N[2]$ is indeed a $2$-cover of $\C N[1]$.
\end{rmk}

\begin{rmk}\label{rmk: similar remarks for N2}
Using \Cref{rmk: proj between network}, it is readily verified that the conclusions of Remarks \ref{rmk: atmost one arrow or edge between two vertices} and \ref{rmk: traversal in N1 has a unique string} also hold true for $\C N[2]$.
\end{rmk}

\begin{defn}
Let $\C T\in\triangle$. Call the subnetwork $\pi^{-1}(\C T)$ a \emph{hexagon} in $\C N[2]$. \Cref{fig: triangle and hexagon} shows a hexagon in $\C N[2]$ corresponding to a triangle in $\C N[1]$.
\end{defn}

\begin{figure}[H]
    \[\begin{tikzcd}[column sep=small]
                              &                                                      &            &  &                                & {(n,m,1)} \arrow[ld, "{(a,b,1)}"'] \arrow[rd, "{(c,b,1)}"]    &                                 \\
                              & {(n,m)} \arrow[ld, "{(a,b)}"'] \arrow[rd, "{(c,b)}"] &            &  & {(n',m',1)} \arrow[d, no head] &                                                               & {(n'',m',1)}                    \\
{(n',m')} \arrow[rr, no head] &                                                      & {(n'',m')} &  & {(n'',m',-1)}                  &                                                               & {(n',m',-1)} \arrow[u, no head] \\
                              &                                                      &            &  &                                & {(n,m,-1)} \arrow[lu, "{(c,b,-1)}"] \arrow[ru, "{(a,b,-1)}"'] &                                
\end{tikzcd}\]
    \caption{A triangle in $\C N[1]$ and the corresponding hexagon in $\C N[2]$}
    \label{fig: triangle and hexagon}
\end{figure}
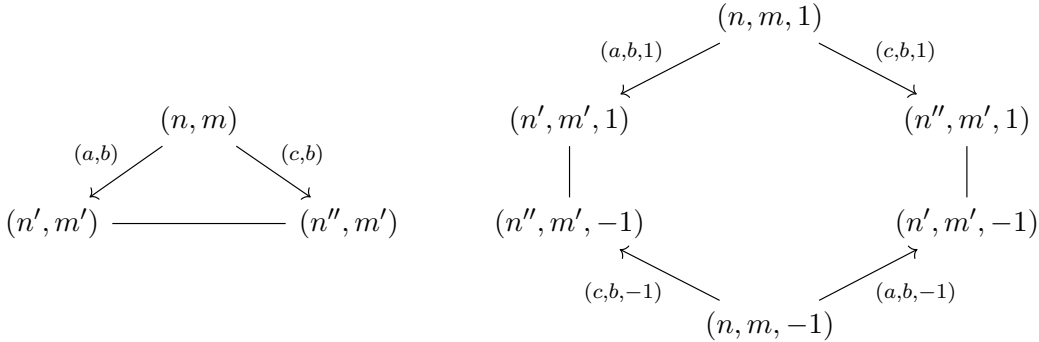

\section{Generalised graph maps}\label{sec: ggm}
If $M_1=F_{1\lambda}(V_{T^1})$ and $M_2=F_{2\lambda}(V_{T^2})$ are tree modules, recall from \Cref{thm: c-b} the description of a graph map $(\Tfac,\Tim,\Phi)$ from $M_1$ to $M_2$. The quiver isomorphism $\Phi$ can be treated as a subnetwork $\C M:=(\C M_0,\C M_1,\C{E_M})$ of $\C N[2]$ satisfying the following properties for each $(n,m,1)\in \C M_0$:
\begin{itemize}
    \item for every $n'\xrightarrow{a}n$ in $T^1$, there exists a unique $m'\xrightarrow{b}m$ in $T^2$ so that $(n',m',1)\in \C M_0$ and 
    $$((n',m',1)\xrightarrow{(a,b,1)}(n,m,1))\in \C M_1;$$
    \item for every $m\xrightarrow{b}m'$ in $T^2$, there exists a unique $n\xrightarrow{a}n'$ in $T^1$ so that $(n',m',1)\in \C M_0$ and
    $$((n,m,1)\xrightarrow{(a,b,1)}(n',m',1))\in \C M_1.$$
\end{itemize}

On the other hand, given a subnetwork $\C M:=(\C M_0,\C M_1,\C{E_M})$ of $\C N[2]$, we can define a $\C K$-linear map $\C {H_M}:M_1\to M_2$ as follows:
\begin{equation}\label{map from M1 to M2}
    \C{ H_{M}}(v_n):=\sum_{(n,m,j)\in\C M_0}jw_m.
\end{equation}
The $\C K$-linear map $\C{ H_{M}}$ defined above is not necessarily a homomorphism as \Cref{ex: blocked reln motivation} shows.

\begin{exmp}\label{ex: blocked reln motivation}
Consider the generalised tree modules $M_1:=F_\lambda(V_{T^1})$ and $M_2:=F_\lambda(V_{T^2})$ over $\C K\B A_2$, where the trees $T^1$ and $T^2$ are given in Figure \ref{fig:blocked relation motivation} with $F_1(1)=F_2(4)=\B 1$, $F_1(2)=F_1(3)=F_2(5)=\B2$ and $F_1(a)=F_1(b)=F_2(c)=\B a$.
The $2$-covering network $\C N[2]$ is shown in \Cref{fig: N2 for an example}. The $\C K$-linear map corresponding to the induced subnetwork $\ang{\C M_0}$ (see \Cref{map from M1 to M2}), where $\C M_0:=\{(2,5,1),(1,4, 1),(3,5,1)\}$, is not a homomorphism since $\B a\cdot\C{ H_{M}}(v_1)=w_5\neq 2w_5=\C{ H_{M}}(\B a\cdot v_1).$
\end{exmp}

\begin{minipage}{0.45\linewidth}
    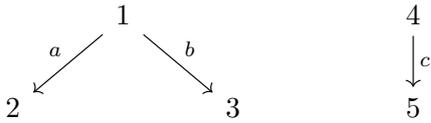
\begin{figure}[H]
    \[\begin{tikzcd}
  & 1 \arrow[ld, "a"'] \arrow[rd, "b"] &   &  & 4 \arrow[d, "c"] \\
2 &                                    & 3 &  & 5                
\end{tikzcd}\]
    \caption{Two trees $T^1$ and $T^2$ used in \Cref{ex: blocked reln motivation}}
    \label{fig:blocked relation motivation}
\end{figure}
\end{minipage}
\hfill
\begin{minipage}{0.55\linewidth}
\begin{figure}[H]
    \[\begin{tikzcd}
                             & {(1,4,1)} \arrow[ld, "{(a,c,1)}"'] \arrow[rd, "{(b,c,1)}"]    &                              \\
{(2,5,1)} \arrow[d, no head] &                                                               & {(3,5,1)} \arrow[d, no head] \\
{(3,5,-1)}                   &                                                               & {(2,5,-1)}                   \\
                             & {(1,4,-1)} \arrow[lu, "{(b,c,-1)}"] \arrow[ru, "{(a,c,-1)}"'] &                             
\end{tikzcd}\]
    \caption{Network $\C N[2]$ in \Cref{ex: blocked reln motivation}}
    \label{fig: N2 for an example}
\end{figure}
\end{minipage}
\vspace{2mm}

Therefore, when $M_1,M_2$ are generalized tree modules, we will define certain subnetworks of $\C N[2]$ in this section for which the corresponding map, as defined in \Cref{map from M1 to M2}, is a homomorphism. Such subnetworks will be called ``generalised graph maps'' (\Cref{defn: ggm}) since they are a generalisation of graph maps between two tree modules. Contrary to what one might expect, the (finite) set of such maps will be a generating set (\Cref{thm: main 1}) for $\Hom$ and it may fail to be a basis (see \Cref{exmp: ggm}).

Note that there are two kinds of properties of a subnetwork of $\C N[2]$ corresponding to a graph map between two tree modules--existence and uniqueness. Keeping in mind the ``sign-flip homomorphisms'' as shown in \Cref{exmp: motivation of flip}, the existence conditions are generalised and captured in a \emph{complete subnetwork} as defined below.

\begin{defn}\label{defn: complete subnetwork}
A subnetwork $\C M:=(\C M_0,\C M_1,\C{E_M})$ of $\C N[2]$ is called \emph{complete} if the following hold for each $(n,m,j)\in\C M_0$.
\begin{enumerate}
    \item For every $n'\xrightarrow{a}n$ in $T^1$, at least one of the following holds:
    \begin{enumerate}
        \item there exists $m'\xrightarrow{b} m$ in $T^2$ such that $$((n',m',j)\xrightarrow{(a,b,j)}(n,m,j))\in\C M_1;$$

        \item there exist $n\xleftarrow{a}n'\xrightarrow{c}n''$ in $T^1$ with $F_1(a)=F_1(c)$, and $(n'',m,-j)\in\C M_0$ such that $$\{(n,m,j),(n'',m,-j)\}\in\C{E_M}.$$
    \end{enumerate}   

    \item For every $m\xrightarrow{b}m'$ in $T^2$, at least one of the following holds: 
    \begin{enumerate}
        \item there exists $n\xrightarrow{a}n'$ in $T^1$ such that $$((n,m,j)\xrightarrow{(a,b,j)}(n',m',j))\in\C M_1;$$

        \item there exist $m\xrightarrow{b}m'\xleftarrow{d}m''$ in $T^2$ with $F_2(b)=F_2(d)$, and $(n,m'',-j)\in\C M_0$ such that $$\{(n,m,j),(n'',m,-j)\}\in\C{E_M}.$$
    \end{enumerate}     
\end{enumerate}
\end{defn}

\begin{exmp}\label{exmp: complete network}
Continuing from \Cref{ex: blocked reln motivation}, it is easy to verify that the non-empty complete subnetworks of $\C N[2]$ (see \Cref{fig: N2 for an example}) are precisely the subnetworks which have at least two vertices in each of its connected components.
\end{exmp}

Here is a simple observation on the completeness of two subnetworks of $\C N[2]$ with the same set of vertices.

\begin{rmk}\label{rmk: super network is complete}
Let $\C M^1$ be a subnetwork of $\C N[2]$ and $\C M^2$ be a subnetwork of $\C M^1$  such that $\C M^2_0=\C M^1_0$. If $\C M^2$ is complete, then so is $\C M^1$.
\end{rmk}

The uniqueness conditions for a subnetwork corresponding to a graph map between two tree modules in generalised in the following way: for Condition (1) of \Cref{defn: complete subnetwork}, at most one of Conditions (1a) or (1b) should hold, at most one arrow $b$ should satisfy Condition (1a), and at most one edge should satisfy Condition (1b). Similar is the case for Condition (2). We capture this information in the form of ``blocked traversals'' and look at those subnetworks that do not contain such blocked traversals. Suprisingly, all such blocked traversals are adjacent links of a hexagon in $\C N[2]$. Therefore, define 
\begin{align}
    \label{defn: R1}\C R[1]&:=\{(n_1,m_1)--(n_2,m_2)--(n_3,m_3)\in\f T(\C N[1])\mid\langle\{(n_k,m_k)\mid k\in\{1,2,3\}\}\rangle\in\triangle\};\text{ and}\\
    \label{defn: R2}\C R[2]&:=\{\f t\in\f T(\C N[2])\mid\f \pi(\f t)\in\C R[1]\}.
\end{align}

\begin{defn}\label{defn: Rj free}
For $j\in\{1,2\}$, a subnetwork $\C M$ of $\C N[j]$ is said to be \emph{$\C R[j]$-free} if there is no traversal $\f t$ of $\C M$ that lies in $\C R[j]$.
\end{defn}

The following result, which we mention without proof, captures the discussion in the last paragraph.

\begin{prop}\label{prop: uniqueness=Rfree}
Let $\C M:=(\C M_0,\C M_1,\C{E_M})$ be a complete subnetwork of $\C N[2]$. Then $\C M$ is $\C R[2]$-free if and only if for every $(n,m,j)\in\C M_0$, both the following hold:
\begin{itemize}
    \item for each  $(n'\xrightarrow{a}n)\in T^1_1$, exactly one of Conditions (1a) and (1b) of \Cref{defn: complete subnetwork} hold, at most one arrow $b$ satisfies Condition (1a), and at most one edge satisfies Condition (1b); and
    \item for each $(m\xrightarrow{b}m')\in T^2_1$, exactly one of Conditions (2a) and (2b) hold, at most one arrow $b$ satisfies Condition (2a), and at most one edge satisfies Condition (2b).
\end{itemize}
\end{prop}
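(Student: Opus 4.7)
The plan is to prove both implications by contrapositive, exploiting the fact that a blocked length-$2$ traversal in $\C R[2]$ sits inside a hexagon whose projection is one of the four triangle types depicted in \Cref{fig:triangle}.

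For the forward direction, assume $\C M$ is complete but some uniqueness condition fails at $(n,m,j)\in\C M_0$ with respect to an arrow $a:n'\to n$ in $T^1$. If both (1a) and (1b) of \Cref{defn: complete subnetwork} hold, then an arrow $(a,b,j):(n',m',j)\to(n,m,j)$ in $\C M_1$ and an edge $\{(n,m,j),(n'',m,-j)\}\in\C{E_M}$ coexist, with $c:n'\to n''$ in $T^1$ satisfying $F_1(c)=F_1(a)=F_2(b)$; hence the arrow $(c,b):(n',m')\to(n'',m)$ lies in $\C N[1]_1$, so $\{(n',m'),(n,m),(n'',m)\}$ spans a type A triangle and the length-$2$ traversal $(n',m',j)\to(n,m,j)\text{ --- }(n'',m,-j)$ belongs to $\C M\cap\C R[2]$---contradiction. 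If two distinct arrows $b,b'$ of $T^2$ both satisfy (1a), then (since $T^2$ is a tree) their sources $m',m''$ are distinct, and the fork $m'\xrightarrow{b}m\xleftarrow{b'}m''$ produces an edge $\{(n',m'),(n',m'')\}\in\C E^1$: this yields a type B triangle and a blocked traversal $(n',m'',j)\to(n,m,j)\leftarrow(n',m',j)$ in $\C M$. If two distinct $n'',n'''$ satisfy (1b), the fork $n''\xleftarrow{c}n'\xrightarrow{c'}n'''$ similarly yields a type D triangle and a blocked traversal $(n'',m,-j)\text{ --- }(n,m,j)\text{ --- }(n''',m,-j)$ in $\C M$. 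The cases for Condition (2) are symmetric, producing triangles of types A, B, and C.

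For the reverse direction, assume $\C M$ is complete and some $\f t=\ell_2\ell_1\in\C R[2]$ lies in $\C M$; let $v$ be its middle vertex. The projection $\pi(\f t)$ realises one of the four triangle types, and $\f t$ consists of two adjacent links of the corresponding hexagon in $\C N[2]$. A case analysis on the triangle type and on whether each $\ell_i$ is an arrow or an edge incident at $v$ identifies $\ell_1$ and $\ell_2$ as two distinct extension witnesses for $v$: two arrows incident at $v$ produce two arrows of $T^1$ or $T^2$ satisfying (1a) or (2a); two edges produce two edges satisfying (1b) or (2b); and one arrow plus one edge produces simultaneous validity of (1a) and (1b), or of (2a) and (2b). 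In every instance the uniqueness at $v$ fails.

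The main obstacle is the case analysis in the reverse direction. It is finite---four triangle types, up to three positions of $v$ per triangle, and three link-type combinations at $v$---but tedious. However, each subcase reduces to reading off the local hexagonal geometry at $v$ and matching it against a single clause of \Cref{defn: complete subnetwork}; no appeal to the deeper triangle-closure results \Cref{prop: triangle structure 1} or \Cref{prop: triangle structure complete graph} is required, so the verification is mechanical rather than structural.
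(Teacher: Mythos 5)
The paper states this proposition without proof (``The following result, which we mention without proof, \dots''), so there is no authorial argument to compare against; your proof supplies the missing details and is essentially the intended one: failures of the uniqueness clauses and blocked length-$2$ traversals are two descriptions of the same local configurations, matched type by type against \Cref{fig:triangle}. Your forward direction is complete and correct. In the reverse direction there is one point you gloss over, and it contradicts your closing claim that no structural input about triangles is needed. When the middle vertex $v$ of the blocked traversal carries two \emph{edges} (triangle types C and D), those two edges constitute a violation of uniqueness only if they both witness Condition (1b) (resp.\ (2b)) of \Cref{defn: complete subnetwork} \emph{for the same arrow} of $T^1$ (resp.\ $T^2$): a priori each edge arises from its own fork, and forks with distinct apexes witness the condition for distinct arrows at $v$, which is no violation at all. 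What rescues the argument is the third link of the triangle, which you never use: by uniqueness of paths in a tree, the presence of all three pairwise edges forces the three forks to share a common apex --- this is precisely the content of Case~II in the proof of \Cref{prop: triangle structure 1} and of \Cref{fig: case 2 codomain triangle structure 1} --- and only then do the two edges at $v$ witness the same arrow. The analogous remark applies to the arrow-plus-edge position in a type~A or~B hexagon, where tree-path uniqueness identifies the fork generating the edge with the fork formed by the two arrows of the triangle. With that observation inserted, the remaining case analysis is indeed mechanical and the proof is correct.
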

\begin{rmk}\label{rmk: connected component of complete R free}
Let $\C M$ be a complete $\C R[2]$-free subnetwork of $\C N[2]$ and $\C M'$ be a connected component of $\C M$. Then $\C M'$ is also a complete $\C R[2]$-free subnetwork of $\C N[2]$.
\end{rmk}

\begin{exmp}\label{exmp: complete R-free subnetwork}
Continuing from \Cref{exmp: complete network}, any subnetwork containing exactly two vertices in each connected component characterises all the non-empty complete $\C R[2]$-free subnetworks of $\C N[2]$ (see \Cref{fig: N2 for an example}).
\end{exmp}

We have been aiming at the following result.

\begin{prop}\label{prop: complete and Rfree is homo}
Let $\C M:=(\C M_0,\C M_1,\C{E_M})$ be a complete and $\C R[2]$-free subnetwork of $\C N[2]$. Then the $\C K$-linear map $\C{ H_{M}}:M_1\to M_2$ as defined in \Cref{map from M1 to M2} is a homomorphism.
\end{prop}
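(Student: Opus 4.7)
\bigskip
\noindent\textbf{Proof plan for \Cref{prop: complete and Rfree is homo}.}

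Since $\{v_n\}_{n\in T^1_0}$ generates $M_1$ and $\Lambda$ is generated as a $\C K$-algebra by $Q_0\cup Q_1$, it suffices to check that $\gamma\cdot\C H_{\C M}(v_n)=\C H_{\C M}(\gamma\cdot v_n)$ for every $\gamma\in Q_1$ and every $n\in T^1_0$. If $F_1(n)\neq\varsigma(\gamma)$, then $\gamma\cdot v_n=0$, and for every $(n,m,j)\in\C M_0$ we have $F_2(m)=F_1(n)\neq\varsigma(\gamma)$ so $\gamma\cdot w_m=0$; both sides then vanish. Thus I may assume $F_1(n)=\varsigma(\gamma)$, so every $(n,m,j)\in\C M_0$ satisfies $F_2(m)=\varsigma(\gamma)$.

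For an arbitrary $m'\in T^2_0$ with $F_2(m')=\varepsilon(\gamma)$, I will show that the coefficient of $w_{m'}$ on both sides agree. Using the definition of $F_\lambda$ from \Cref{rmk: Flambda functor}, and the fact that $T^1,T^2$ are trees (so at most one arrow between any pair of vertices), I compute
\begin{equation*}
[w_{m'}]\bigl(\gamma\cdot\C H_{\C M}(v_n)\bigr)=\!\!\!\!\sum_{\substack{(n,m,j)\in\C M_0\\ \exists\,b\colon m\xrightarrow{b}m',\,F_2(b)=\gamma}}\!\!\!\!j,\qquad [w_{m'}]\bigl(\C H_{\C M}(\gamma\cdot v_n)\bigr)=\!\!\!\!\sum_{\substack{a\colon n\xrightarrow{a}n',\,F_1(a)=\gamma\\ (n',m',j')\in\C M_0}}\!\!\!\!j'.
\end{equation*}
Call the summation indices on the left LHS--terms and on the right RHS--terms. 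The plan is to show that their signed sum is zero by pairing them off.

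For each LHS--term $(n,m,j)$ with distinguished arrow $b\colon m\to m'$, I apply \Cref{prop: uniqueness=Rfree} to see that exactly one of Conditions (2a), (2b) of \Cref{defn: complete subnetwork} holds. If (2a) holds, a unique arrow $(a,b,j)\colon(n,m,j)\to(n',m',j)$ lies in $\C M_1$; this realises the corresponding RHS--term $(n',m',j)$ via $a\colon n\to n'$, and by the uniqueness clauses of \Cref{prop: uniqueness=Rfree} applied symmetrically at $(n',m',j)$ with incoming arrow $a$, this gives a bijection between LHS--terms of type (2a) and RHS--terms of type (1a), matching contributions with identical sign $j$. Symmetrically, for each RHS--term $(n',m',j')$, either (1a) gives the reverse match, or (1b) produces a distinct vertex $(n'',m',-j')\in\C M_0$ connected to $(n',m',j')$ by an edge in $\C E_{\C M}$, together with an arrow $c\colon n\to n''$ in $T^1$ with $F_1(c)=\gamma$; hence $(n'',m',-j')$ is itself an RHS--term contributing $-j'$. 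Using the fact that edges in $\C E^2$ come from the symmetric data in $\C E^1$ together with the uniqueness of (1b)--edges at $(n'',m',-j')$, this pairing is an involution, so (1b)--contributions cancel in pairs on the right. The analogous argument shows that LHS--terms of type (2b) pair off via edges $\{(n,m,j),(n,m'',-j)\}\in\C E_{\C M}$ into cancelling contributions $+j$ and $-j$.

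After these three bookkeeping steps, every contribution to the signed difference is accounted for, and the coefficient of $w_{m'}$ vanishes. The main obstacle is purely organisational: one must verify the involutivity of the (2b)-- and (1b)--pairings, which relies on the symmetric construction of $\C E^2$ from $\C E^1$ (so that the partner vertex indeed lies in $\C M_0$ with the correct sign), together with the ``exactly one'' and ``at most one'' clauses of \Cref{prop: uniqueness=Rfree} (so that the partner is unique and remains of the same type).
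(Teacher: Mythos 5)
Your proposal is correct and follows essentially the same route as the paper: the paper partitions the contributing indices into edge-paired terms (its $\mathscr{I}_2,\mathscr{J}_2$, your types (2b) and (1b)), which cancel in $\pm$ pairs via the uniqueness of edges, and arrow-paired terms (its $\mathscr{I}_3,\mathscr{J}_3$, your types (2a) and (1a)), which it puts in sign-preserving bijection using the uniqueness clauses of \Cref{prop: uniqueness=Rfree}. The only difference is bookkeeping notation, not substance.
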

\begin{proof}
As mentioned at the beginning of \S~\ref{sec: cov network}, we will denote the $\C K$-basis elements of $M_1$ and $M_2$ by $\{v_n\}_{n\in T^1_0}$ and $\{w_m\}_{m\in T^2_0}$ respectively. It is sufficient to show that $\alpha\cdot\C{H_M}(v_n)=\C{H_M}(\alpha\cdot v_n)$ for every $n\in T^1_0$ and every $\alpha\in  Q_1$ by showing that for each $m\in T^2_0$, the coefficient of $w_m$ is identical in $\alpha\cdot\C{H_M}(v_n)$ and $\C{H_M}(\alpha\cdot v_n)$ with respect to the chosen bases. Let $n_0\in T^1_0$ and $m_0\in T^2_0$ be arbitrarily chosen and fixed.

Suppose $\C{H_M}(v_{n_0})=\sum_{(n_0,m,j)\in\C M_0}jw_m=:\sum_{k=1}^Lj_kw_{m_k}$, where each $j_k\in\{-1,1\}$. Let $\mathscr{I}:=\{1,\cdots,L\}$. Partition $\mathscr I$ into three sets $\mathscr I_1$, $\mathscr I_2$ and $\mathscr I_3$ as follows:
\begin{align*}
    \mathscr{I}_1&:=\{k\in\mathscr I\mid\text{the coefficient of $w_{m_0}$ in }\alpha\cdot w_{m_k}\text{ is zero}\};\\
    \mathscr{I}_2&:=\{k\in\mathscr I\mid\text{ there exists $k'\in\mathscr{I}$ such that }\{(n,m_k,j_k),(n,m_{k'},j_{k'})\}\in\C {E_M}\}\setminus\mathscr{I}_1;\text{ and}\\
    \mathscr{I}_3&:=\mathscr{I}\setminus(\mathscr{I}_1\cup\mathscr{I}_2).
\end{align*}
Using the uniqueness of an edge in Condition (2b) of \Cref{defn: complete subnetwork}, in view of \Cref{prop: uniqueness=Rfree}, for each $k\in\mathscr{I}_2$, there exists unique $k'\in\mathscr{I}_2$ such that $j_{k'}=-j_k$ and $\{(n,m_k,j_k),(n,m_{k'},j_{k'})\}\in\C {E_M}$. Therefore, the coefficient of $w_{m_0}$ in $\alpha\cdot(\sum_{k\in\mathscr{I}_2}j_kw_{m_k})$ is zero, which gives that the coefficient of $w_{m_0}$ in $\alpha\cdot(\sum_{k\in\mathscr{I}_3}j_kw_{m_k})$ is same as that in $\alpha\cdot({\sum_{k\in\mathscr{I}}j_kw_{m_k}})$.

Now, suppose $\alpha\cdot v_{n_0}=:\sum_{k=1}^{L'}v_{n_k}$ with the coefficient of $w_{m_0}$ in $\C{H_M}(v_{n_k})$ being $j'_k$. Let $\mathscr{J}:=\{1,\cdots,L'\}$. Partition $\mathscr J$ into three sets $\mathscr J_1$, $\mathscr J_2$ and $\mathscr J_3$ as follows:
\begin{align*}
    \mathscr{J}_1&:=\{k\in\mathscr{J}\mid j'_k=0\};\\
    \mathscr{J}_2&:=\{k\in\mathscr J\mid\text{ there exists $k'\in\mathscr{J}$ such that }\{(n_k,m,j'_k),(n_{k'},m,j'_{k'})\}\in\C {E_M}\}\setminus\mathscr{I}_1;\text{ and}\\
    \mathscr{J}_3&:=\mathscr{J}\setminus(\mathscr{J}_1\cup\mathscr{J}_2).
\end{align*}
Using the uniqueness of an edge in Condition (1b) of \Cref{defn: complete subnetwork}, in view of \Cref{prop: uniqueness=Rfree}, for each $k\in\mathscr{J}_2$, there exists a unique $k'\in\mathscr{J}_2$ such that $j'_{k'}=-j'_k$ and $\{(n_k,m,j'_k),(n_{k'},m,j'_{k'})\}\in\C {E_M}$. Therefore, the coefficient of $w_{m_0}$ in $\C{H_M}(\sum_{k\in\mathscr{J}_2}v_{n_k})$ is zero, which gives that the coefficient of $w_m$ in $\C{H_M}(\sum_{k\in\mathscr{J}_3}v_{n_k})$ is same as that in $\C{H_M}(\sum_{k\in\mathscr{J}}v_{n_k})$.

It is clear from the discussion in the above two paragraphs that in order to complete the proof of the proposition, it is sufficient to show that $\sum_{k\in\mathscr I_3}j_k=\sum_{k\in\mathscr J_3}j'_k$.

For each $k\in\mathscr{I}_3$, we have that $(n_0,m_k,j_k)\in\C M_0$ and $(m_k\xrightarrow{a}m_0)\in T^2_1$ with $F_2(a)=\alpha$, and Condition (2b) is not satisfied, for otherwise $k\in\mathscr I_2$, a contradiction to the hypothesis that $\C M$ is $\C R[2]$-free. Since $\C M$ is complete and $\C R[2]$-free, we have that Condition (2a) is satisfied for $k\in\mathscr{I}_3$ by a unique arrow (cf. \Cref{prop: uniqueness=Rfree}), thus providing an injection $\mathscr{F}:\mathscr{I}_3\to\mathscr{J}_3$ such that $j'_{\mathscr{F}(k)}=j_k$. Similarly, using uniqueness in  Condition (1a) of \Cref{defn: complete subnetwork}, there is an injection $\mathscr{F'}:\mathscr{J}_3\to\mathscr{I}_3$ such that $j_{\mathscr{F}(k)}=j'_k$. Therefore, there is a bijection between the sets $\mathscr{I}_3$ and $\mathscr{J}_3$ yielding an equality $\sum_{k\in\mathscr{I}_3}j_k=\sum_{k\in\mathscr{J}_3}j'_k$, as required.
\end{proof}

The coefficient of $w_m$ in $\C {H_M}(v_n)$ is $0$ if both $(n,m,1)$ and $(n,m,-1)$ are in $\C M_0$. To avoid this redundancy, it is desirable to consider those subnetworks $\C M'$ of $\C N[2]$ for which if $(n,m,j)\in\C M_0$ then $(n,m,-j)\notin\C M_0$, which we call \emph{involution-free} subnetworks. Also recall that, in view of \Cref{rmk: connected component of complete R free}, it is sufficient to work with connected subnetworks. Finally, we are ready to introduce the long-awaited concept of generalised graph maps from $M_1$ to $M_2$.

\begin{defn}\label{defn: ggm}
A \emph{generalised graph map} $\C G$ from $M_1$ to $M_2$ is a non-empty connected involution-free complete $\C R[2]$-free subnetwork of $\C N[2]$. We denote the homomorphism obtained from a generalised graph map $\C G$ by $\C{H_G}$.
\end{defn}

The following remarks are simple observations about generalised graph maps.

\begin{rmk}
The set of generalised graph maps from $M_1$ to $M_2$ is finite.
\end{rmk}

\begin{rmk}\label{rmk: -generalised graph map}
If $\C G:=(\C G_0,\C G_1,\C{E_G})$ is a generalised graph map then so is $-\C G:=(-\C G_0,-\C G_1,\C{E_{-G}})$, where
\begin{align*}
    -\C G_i&:=\{(x,y,j)\in\C N[2]_i\mid(x,y,-j)\in\C G_i\}\text{ for }i\in\{0,1\}; \text{ and}\\
    \C{E_{-G}}&:=\{\{(n,m,j),(n',m',-j)\}\in\C E^2\mid\{(n,m,-j),(n',m',j)\}\in\C{E_G}\}.
\end{align*}
Also, the corresponding homomorphisms satisfy $$\C{H_{-G}}=-\C{ H_G}.$$
\end{rmk}

\begin{exmp}\label{exmp: ggm}
Continuing from \Cref{exmp: complete R-free subnetwork}, the generalised graph maps are $\pm \C G_1$, $\pm\C G_2$, $\pm \C G_3$ as defined below:
\begin{align*}
    \C G_1:\  (1,4,1)\xrightarrow{(a,c,1)}(2,5,1);\ \ 
    \C G_2:\  (1,4,1)\xrightarrow{(b,c,1)}(3,5,1);\ \ 
    \C G_3:\  (2,5,1)\text{ ----- }(3,5,-1).
\end{align*}
The homomorphisms corresponding to the generalised graph maps are $\pm\C H_{\C G_1}$, $\pm\C H_{\C G_2}$, $\pm\C H_{\C G_3}$ as defined below:
\begin{align*}
    \C H_{\C G_1}:\ v_1\mapsto w_4,\ v_2\mapsto w_5,\ v_3\mapsto0;\ \ 
    \C H_{\C G_2}:\ v_1\mapsto w_4,\ v_2\mapsto 0,\ v_3\mapsto w_5;\ \ 
    \C H_{\C G_3}:\ v_1\mapsto0,\ v_2\mapsto w_5,\ v_3\mapsto -w_5.
\end{align*}
Note that $\C H_{\C G_1}-\C H_{\C G_2}=\C H_{\C G_3}$, which implies that the set $\{\C H_{\C G_1},\C H_{\C G_2},\C H_{\C G_3}\}$ is linearly dependent.
\end{exmp}

\section{Carving out a complete $\C R[2]$-free network from a complete network}\label{sec: R2 free from complete}
Let $\C V$ be a subset of $\C N[1]_0$ such that $\C M:=\ang{\pi^{-1}(\C V)}$ of $\C N[2]$ is complete. In this section, we describe an algorithm to obtain a complete $\C R[2]$-free subnetwork $\C M'$ of $\C M$ satisfying $\C M'_0=\C M_0=\pi^{-1}(\C V)$.

Recall the relation $\approx$ on the set $\triangle$ of triangles in $\C N[1]$ defined before \Cref{prop: triangle structure 1}. We denote the set of vertices in an $\approx$-equivalence class $\Gamma$ in $\triangle$ by $\Gamma_0$.

\begin{defn}\label{defn: R system}
Say that a subnetwork $\C S[1]$ of $\ang{\C V}$ is an \emph{$\C R[1]$-system relative to $\C V$} if $\C S[1]=\ang{\Gamma_0\cap\C V}$ for some $\approx$-equivalence class $\Gamma$ satisfying $|\Gamma_0\cap\C V|\geq3$. Say that a subnetwork $\C S[2]$ of $\C M$ is an \emph{$\C R[2]$-system relative to $\C V$} if $\C S[2]=\pi^{-1}(\C S[1])$ for some $\C R[1]$-system $\C S[1]$ relative to $\C V$.
\end{defn}

We shall drop the phrase ``relative to $\C V$'' when the set $\C V$ is clear from context. The following remarks shed more light on the structure of $\C R[1]$-systems and $\C R[2]$-systems.

\begin{rmk}\label{rmk: 2 length traversal in R system blocked}
Using Definitions \eqref{defn: R1} and \eqref{defn: R2}, we have that every two length traversal in an  $\C R[1]$-system (resp. $\C R[2]$-system) is in $\C R[1]$ (resp. $\C R[2]$).
\end{rmk}

\begin{rmk}\label{rmk: structure R system-clique}
Using \Cref{prop: triangle structure complete graph}, we have that the underlying graph of an $\C R[1]$-system is a clique. Therefore, for any set $\C V'$ of three vertices of an $\C R[1]$-system, the subnetwork $\ang{\pi^{-1}(\C V')}$ is a hexagon in the corresponding $\C R[2]$-system. 
\end{rmk}

\begin{rmk}\label{rmk: structure R system-sharing}
Any two triangles in an $\C R[1]$-system either share a common link or a common vertex or nothing. Correspondingly, any two hexagons in an $\C R[2]$-system either share a pair of antipodal links or a pair of antipodal vertices or nothing.
\end{rmk}

We wish to obtain a complete $\C R[2]$-free subnetwork of $\C N[2]$ containing $\pi^{-1}(\C V)$ by obtaining an $\C R[2]$-free subnetwork of each $\C R[2]$-system containing all its vertices. In view of \Cref{rmk: 2 length traversal in R system blocked}, obtaining a complete $\C R[2]$-free subnetwork of a complete subnetwork of $\C N[2]$ keeping the set of vertices intact is equivalent to finding a perfect matching(=a set $E$ of edges such that no two edges in $E$ are incident on a common vertex, but each vertex is incident on some edge in $E$) of the underlying graph of each of its $\C R[2]$-systems.

\begin{prop}\label{prop: complete R free from complete involution invariant}
Let $\C V\subseteq\C N[1]_0$ and $\C M:=\ang{\pi^{-1}(\C V)}$ be a complete subnetwork of $\C N[2]$. Then there exists a complete $\C R[2]$-free subnetwork $\C M'$ of $\C M$ such that $\C M'_0=\C M_0$.
\end{prop}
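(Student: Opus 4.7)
The plan is to decompose $\C M$ into its $\C R[2]$-system components and their complement, keep every link of $\C M$ that lies outside every $\C R[2]$-system, and replace each $\C R[2]$-system by a suitable perfect matching of its underlying graph. The first step is to observe that distinct $\C R[1]$-systems are link-disjoint: every link lying in a system belongs to a triangle of its $\approx$-class, and triangles from two different $\approx$-classes cannot share a link (otherwise they would be $\sim$-related, contradicting distinctness); applying $\pi^{-1}$, the $\C R[2]$-systems are link-disjoint too. The second step is to check that every traversal of $\C M$ lying in $\C R[2]$ has both its links inside a single $\C R[2]$-system: the three projected vertices form a triangle in $\C N[1]$ and, since all three lie in $\C V$, they lie in a common $\approx$-class $\Gamma$ with $|\Gamma_0\cap\C V|\geq 3$, so both links lie in $\pi^{-1}(\ang{\Gamma_0\cap\C V})$. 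Hence $\C R[2]$-freeness can be enforced independently inside each $\C R[2]$-system.

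Inside a fixed $\C R[2]$-system $\C S$, \Cref{rmk: 2 length traversal in R system blocked} implies that any two distinct links of $\C S$ sharing a vertex already form a traversal in $\C R[2]$, so retaining $\C S_0$ while being $\C R[2]$-free is equivalent to selecting a perfect matching of the underlying graph of $\C S$. To see that \emph{some} perfect matching preserves completeness I would prove a structural key lemma: each $\approx$-class $\Gamma$ is labelled by a unique arrow $\alpha_\Gamma\in Q_1$---the common $F$-image of the arrows of its triangles, together with the arrows in $T^1$ or $T^2$ witnessing its sibling/co-parent edges---and at each vertex $(n,m,j)\in\C S_0$ all in-system links witness one and the same completeness requirement at that vertex, namely a specific ``incoming $T^1$-arrow with $F_1$-image $\alpha_\Gamma$'' or a specific ``outgoing $T^2$-arrow with $F_2$-image $\alpha_\Gamma$''. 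Granted this lemma, the single matching link chosen at $(n,m,j)$ is automatically a valid in-system witness, and completeness requirements that are witnessed outside the $\C R[2]$-systems are covered by links of $\C M$ that the construction leaves in place.

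For the existence of the matching, \Cref{prop: triangle structure complete graph} shows that the underlying graph of $\ang{\Gamma_0\cap\C V}$ is a clique on $k:=|\Gamma_0\cap\C V|\geq 3$ vertices, so the $2$-cover in $\C N[2]$ is a $(k-1)$-regular graph on $2k$ vertices. Because every triangle of $\triangle$ has an odd (hence non-zero) number of edges, the clique contains at least one sign-flipping edge of $\C N[1]$, and a direct pairing argument---iteratively pairing two vertices of the clique at a time via a successive link (using the two arrow-lifts if the link is an arrow and the two sign-flipping edge-lifts if the link is an edge), and absorbing the leftover vertex via the sign-flipping edge when $k$ is odd---produces the required perfect matching. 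Taking $\C M'$ to be the union, over all $\C R[2]$-systems, of such a matching together with every link of $\C M$ lying outside every $\C R[2]$-system gives a complete $\C R[2]$-free subnetwork of $\C M$ with $\C M'_0=\C M_0$.

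The main obstacle is the structural key lemma. Establishing that all in-system links at a vertex share a single completeness requirement will require a case analysis on the four triangle types of \Cref{fig:triangle} together with an induction along $\sim$-chains; it uses the tree structure of $T^1$ and $T^2$ crucially to rule out configurations in which the $\sim$-relation would force a vertex to play conflicting roles (for instance, being both the source and the target of in-system arrows with $F$-image $\alpha_\Gamma$, or carrying simultaneously sibling-type and co-parent-type edges within the same $\approx$-class). A secondary subtlety is that a single vertex may belong to several $\C R[2]$-systems at once, but this causes no trouble since the systems are pairwise link-disjoint, so the matchings inside different systems can be chosen independently.
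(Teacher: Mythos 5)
Your proposal follows essentially the same route as the paper: decompose $\C M$ into its $\C R[2]$-systems plus the remaining links, reduce $\C R[2]$-freeness within each system (where any two incident links form a blocked traversal) to choosing a perfect matching of its underlying graph, and keep all out-of-system links to cover the remaining completeness requirements. The paper dispatches what you call the ``structural key lemma'' in one line---every discarded link forms a blocked traversal with a retained link at the same vertex, and by \Cref{prop: uniqueness=Rfree} two links forming a blocked traversal are alternative witnesses of the same completeness condition---and it handles odd $k$ by grouping three clique vertices into a single hexagon and taking three pairwise non-adjacent links of it; note that ``absorbing the leftover vertex via the sign-flipping edge'' cannot mean matching $(n_k,m_k,1)$ to $(n_k,m_k,-1)$, since $\C N[2]$ has no link between the two lifts of one vertex, so the triple-grouping is the correct fix.
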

\begin{proof}
Consider an $\C R[2]$-system $\C S[2]$ relative to $\C V$ with $\C S[2]_0:=\{(n_1,m_1),(n_2,m_2),\cdots,(n_k,m_k)\}$ for some $k\geq3$. Consider the following partition $\C P$ of $\C S[2]_0$: 
\begin{equation*}
    \C P:=
    \begin{cases}
    \{\{(n_1,m_1),(n_2,m_2)\},\{(n_3,m_3),(n_4,m_4)\},\cdots,\{(n_{k-2},m_{k-2}),(n_{k-1},m_{k-1}),(n_k,m_k)\}\}&\text{ if $k$ is odd};\\
    \{\{(n_1,m_1),(n_2,m_2)\},\{(n_3,m_3),(n_4,m_4)\},\cdots,\{(n_{k-1},m_{k-1}),(n_k,m_k)\}\}&\text{ if $k$ is even}.
    \end{cases}
\end{equation*}
In view of \Cref{rmk: structure R system-clique}, for every $\C V'\in\C P$, there is a hexagon containing the vertices in the set $\pi^{-1}(\C V')$. The diagram on the left (resp. right) of \Cref{fig:perfect match} shows the matching for a two-element (resp. three-element) subset $\C V'$ in $\C P$. Denote the subnetwork of $\C S[2]$ obtained by taking the union of all perfect matchings by $\C S'[2]$.

Let $\C S_1[2],\cdots,\C S_N[2]$ be all the pairwise distinct $\C R[2]$-systems relative to $\C V$. As discussed in the above paragraph, obtain the $\C R[2]$-free subnetworks $\C S'_1[2],\cdots,\C S'_N[2]$ satisfying $\C S'_k[2]_0=\C S_k[2]_0$ for each $k\in\{1,\cdots,N\}$. The subnetwork $\C M'$ defined as the union $\bigcup_{k=1}^N\C S'_k[2]$ along with the union of all the links and vertices of $\C M$ not present in $\bigcup_{k=1}^N\C S_k[2]$ is an $\C R[2]$-free subnetwork of $\C M$ such that $\C M'_0=\C M_0$.

We claim that $\C M'$ is complete. Indeed, by our construction of $\C M'$, for every link $\alpha$ in $\C M\setminus\C M'$, there exist links $\beta$ and $\gamma$ in $\C M'$ such that $\alpha\gamma\in\C R[2]$ and $\beta\alpha\in\C R[2]$. Therefore, the completeness of $\C M$ implies the completeness of $\C M'$.
\end{proof}

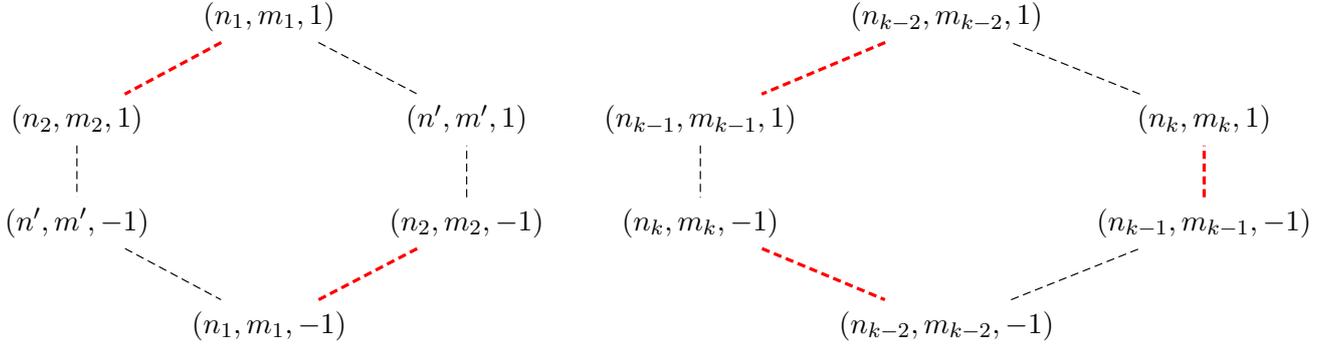
\begin{figure}[H]
    \[\begin{tikzcd}[column sep=tiny]
 & {(n_1,m_1,1)} \arrow[ld, no head, dashed, very thick, color=red] \arrow[rd, no head, dashed] &                                           &        &                                                  & {(n_{k-2},m_{k-2},1)} \arrow[ld, no head, dashed, very thick, color=red]  &                                                   \\
{(n_2,m_2,1)} \arrow[d, no head, dashed]   &                                                                       & {(n',m',1)}                             &  & {(n_{k-1},m_{k-1},1)} \arrow[d, no head, dashed] &                                                    & {(n_k,m_k,1)} \arrow[lu, no head, dashed]         \\
{(n',m',-1)} \arrow[rd, no head, dashed] &                                                                       & {(n_2,m_2,-1)} \arrow[u, no head, dashed] &        & {(n_k,m_k,-1)} \arrow[rd, no head, dashed, very thick, color=red]       &                                                    & {(n_{k-1},m_{k-1},-1)} \arrow[u, no head, dashed, very thick, color=red] \\
& {(n_1,m_1,-1)} \arrow[ru, no head, dashed, very thick, color=red]                            &                                           &        &                                                  & {(n_{k-2},m_{k-2},-1)} \arrow[ru, no head, dashed] &                                                  
\end{tikzcd}\]
    \caption{A perfect matching (in red) for an $\C R[2]$-system used in the proof of \Cref{prop: complete R free from complete involution invariant}}
    \label{fig:perfect match}
\end{figure}

% Now we are ready to state the main result of this section.

% \begin{prop}\label{prop: complete R free from complete involution invariant}
% Let $\C V\subseteq\C N[1]_0$ such that $\C M:=\ang{\pi^{-1}(\C V)}$ is a complete subnetwork of $\C N[2]$. Then there exists a complete $\C R[2]$-free subnetwork $\C M'$ of $\C M$ such that $\C M'_0=\C M_0$.
% \end{prop}
% \begin{proof}
% Let $\C M^1,\C M^2,\cdots,\C M^k$ be the $\C R[2]$-systems in $\C N[2]$. By Proposition \ref{prop:make a system R free}, there is an $\C R[2]$-free subnetwork ${\C M^l}'$ of $\C M^l$ for each $l\in\{1,\cdots,k\}$ such that ${\C M^l_0}'=\C M^l_0$. Now the union of all such subnetworks $\cup_{l=1}^k{\C M^l}'$ along with the union of all the vertices, arrows and edges of $\C M$ which are not in $\C M^l$ for any $l\in\{1,\cdots, k\}$ gives an $\C R[2]$-free subnetwork $\C M'$ of $\C N$ such that $\C M'_0=\C M_0$.

% For every arrow or edge $\alpha$ in $\C M\setminus\C M'$, there exist arrows or edges $\beta$ and $\gamma$ in $\C M'$ such that $\alpha\gamma\in\C R[2]$ and $\beta\alpha\in\C R[2]$. Therefore, completeness of $\C M$ automatically implies completeness of $\C M'$.
% \end{proof}

\section{Generalised graph maps span the Hom-set}\label{sec: main}
Given a homomorphism $\C H:M_1\to M_2$, we aim at expressing it as a finite $\C K$-linear combination of homomorphisms $\C H_{\C G_i}$ corresponding to generalised graph maps $\C G_i$ (\Cref{thm: main 1}). Define the support of $\C H$, denoted $\R{supp}(\C H)$, as
$$\R{supp}(\C H):=\{(n,m)\in\C N[1]_0\ \mid\ \C H(v_n)=\sum_{m'\in T^2_0}\mu_{m'}w_{m'}\text{ with }\mu_m\neq0\}.$$
We will show that $\C M:=\ang{\pi^{-1}(\R{supp}\C H)}$ is complete (\Cref{prop: support is complete}), and then we apply \Cref{prop: complete R free from complete involution invariant} to obtain a complete $\C R[2]$-free subnetwork $\C M'$ of $\C M$. If $\C H$ is a non-zero homomorphism, then $\C M'$ is non-empty. In that case, a connected component $\C M''$ of $\C M'$ is not necessarily a generalised graph map as it may not be involution-free. If there exists $(n,m,j)\in\C M''_0$ for which $(n,m,-j)\notin\C M''_0$, then we will show in \Cref{lem} that there is an algorithm to carve out a generalised graph map $\C G$ from $\C M''$. For an appropriate $\mu\in\C K^\times$, we have $|\R{supp}(\C H-\mu\C{H_G})|<|\R{supp}(\C H)|$, which allows us to induct on $|\R{supp}(\C H)|$ to achieve the goal. 

However, if at some stage, all connected components $\C M''$ of a complete $\C R[2]$-free subnetwork $\C M'$ are \emph{involution-invariant}, i.e. $\C M''_0=\pi^{-1}(\pi(\C M''_0))$, then this algorithm fails to produce a generalised graph map.

\begin{defn}\label{defn: ghost}
Say that a subnetwork $\C M$ of $\C N[2]$ is a \emph{ghost} if $\C M$ is non-empty, connected, involution-invariant, complete and $\C R[2]$-free. Say that the pair $(M_1,M_2)$ of generalised tree modules is \emph{ghost-free} if the $2$-covering network $\C N[2]$ does not include any ghost.
\end{defn}

\begin{rmk}
The set of ghosts in the network $\C N[2]$ is finite.
\end{rmk}

\begin{rmk}\label{rmk: ghost give zero homo}
In view of \Cref{prop: complete and Rfree is homo}, if $\C M$ is a ghost, then $\C{H_M}$ is the zero homomorphism.
\end{rmk}

\begin{exmp}\label{exmp: ghost}
Consider the generalised tree modules $M_1$ and $M_2$ over $\C K\B A_2$, where the trees $T^1$ and $T^2$ are given in \Cref{fig: T1 T2 ghost} with $F_1(1)=F_1(2)=F_2(6)=F_2(8)=\B 1$, $F_1(4)=F_2(5)=F_2(7)=F_2(9)=\B 2$ and $F_1(a)=F_1(b)=F_2(d)=F_2(e)=F_2(f)=F_2(g)=\B a$. The pullback network $\C N[1]$ is shown in \Cref{fig: ghost N1} and the ghost in $\C N[2]$ is shown in \Cref{fig: ghost N2}.
\end{exmp}

\begin{figure}[H]
    \[\begin{tikzcd}
1 \arrow[rd, "a"'] & & 2 \arrow[ld, "b"] &   & 6 \arrow[ld, "d"'] \arrow[rd, "e"] &   & 8 \arrow[ld, "f"'] \arrow[rd, "g"] &   \\
                   & 4                 &                   & 5 &                                     & 7 &                                    & 9
\end{tikzcd}\]
    \caption{Trees $T^1$ and $T^2$ used in \Cref{exmp: ghost}}
    \label{fig: T1 T2 ghost}
\end{figure}
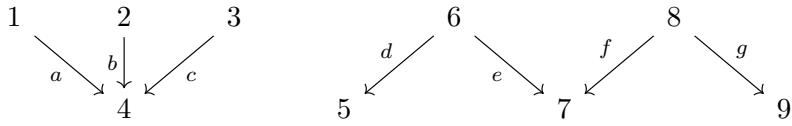

\begin{figure}[H]
    \[\begin{tikzcd}
        & {(1,6)} \arrow[rd, "{(a,e)}"] \arrow[rr, no head] \arrow[ld, "{(a,d)}"'] &         & {(1,8)} \arrow[ld, "{(a,f)}"'] \arrow[rd, "{(a,g)}"] &         \\
{(4,5)} &                                                                          & {(4,7)} &                                                      & {(4,9)} \\
        & {(2,6)} \arrow[ru, "{(b,e)}"'] \arrow[lu, "{(b,d)}"] \arrow[rr, no head] &         & {(2,8)} \arrow[ru, "{(b,g)}"'] \arrow[lu, "{(b,f)}"] &        
\end{tikzcd}\]
    \caption{The subnetwork $\C N[1]$ in \Cref{exmp: ghost} such that $\pi^{-1}(\C N[1])=\C N[2]$ contains a ghost}
    \label{fig: ghost N1}
\end{figure}

\begin{figure}[H]
    \[\begin{tikzcd}[row sep=34pt]
           & {(1,6,1)} \arrow[rd, "{(a,e,1)}"] \arrow[ld, "{(a,d,1)}"] \arrow[ddddd, no head, bend right=110, shift right=3,color=red] &            & {(1,8,1)} \arrow[ld, "{(a,f,1)}"',color=red] \arrow[rd, "{(a,g,1)}"'] \arrow[ddddd, no head, bend left=110, shift left=3] &            \\
{(4,5,1)}  &                                                                                                                & {(4,7,1)}  &                                                                                                                & {(4,9,1)}  \\
           & {(2,6,1)} \arrow[lu, "{(b,d,1)}"] \arrow[ru, "{(b,e,1)}"',color=red] \arrow[d, no head]                                  &            & {(2,8,1)} \arrow[lu, "{(b,f,1)}"] \arrow[ru, "{(b,g,1)}"'] \arrow[d, no head, color=red]                                  &            \\
           & {(2,8,-1)} \arrow[rd, "{(b,f,-1)}",color=red] \arrow[ld, "{(b,g,-1)}"']                                                  &            & {(2,6,-1)} \arrow[ld, "{(b,e,-1)}"'] \arrow[rd, "{(b,d,-1)}"]                                                  &            \\
{(4,9,-1)} &                                                                                                                & {(4,7,-1)} &                                                                                                                & {(4,5,-1)} \\
           & {(1,8,-1)} \arrow[lu, "{(a,g,-1)}"'] \arrow[ru, "{(a,f,-1)}"']                                                 &            & {(1,6,-1)} \arrow[lu, "{(a,e,-1)}",color=red] \arrow[ru, "{(a,d,-1)}"]&           
\end{tikzcd}\]
    \caption{A ghost (shown in black) in $\C N[2]$ used in \Cref{exmp: ghost}}
    \label{fig: ghost N2}
\end{figure}

We begin our journey to \Cref{thm: main 1} by proving an important property of the support of a homomorphism.

\begin{prop}\label{prop: support is complete}
Let $\C H:M_1\to M_2$ be a homomorphism. Then $\C M:=\ang{\pi^{-1}(\R{supp}(\C H))}$ is complete.
\end{prop}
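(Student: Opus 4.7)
The plan is to fix an arbitrary vertex $(n_0, m_0, j_0) \in \C M_0$ and verify Conditions (1) and (2) of \Cref{defn: complete subnetwork}. Since $\C M_0 = \pi^{-1}(\R{supp}(\C H))$, the assumption $(n_0, m_0, j_0) \in \C M_0$ is equivalent to the coefficient $\mu_{m_0}^{n_0}$ of $w_{m_0}$ in $\C H(v_{n_0})$ being non-zero, and already implies that $\C M$ is involution-invariant. The workhorse is the homomorphism identity $\C H(\alpha \cdot v) = \alpha \cdot \C H(v)$ for $\alpha \in Q_1$, combined with the explicit module actions $\alpha \cdot v_n = \sum_{a :\, s(a)=n,\, F_1(a) = \alpha} v_{t(a)}$ in $M_1$ and the analogous formula in $M_2$.

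For Condition (1), given an arrow $n' \xrightarrow{a} n_0$ in $T^1$, I set $\alpha := F_1(a)$ and equate coefficients of $w_{m_0}$ on both sides of $\C H(\alpha \cdot v_{n'}) = \alpha \cdot \C H(v_{n'})$. The right-hand side yields $\sum_{m' \xrightarrow{b} m_0,\, F_2(b) = \alpha} \mu_{m'}^{n'}$, while the left-hand side yields $\mu_{m_0}^{n_0} + \sum_{a' \neq a:\, n' \xrightarrow{a'} n'',\, F_1(a') = \alpha} \mu_{m_0}^{n''}$ (the tree property of $T^1$ precludes two distinct arrows from $n'$ to $n_0$, so each distinct $a'$ has a distinct target $n''$). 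Since $\mu_{m_0}^{n_0} \neq 0$, at least one further summand must be non-zero: either $\mu_{m'}^{n'} \neq 0$ for some $m' \xrightarrow{b} m_0$ with $F_2(b) = \alpha$, placing $(n', m') \in \R{supp}(\C H)$ and yielding (1a); or $\mu_{m_0}^{n''} \neq 0$ for some $n'' \neq n_0$ reached from $n'$ by an arrow $a'$ with $F_1(a') = \alpha$, placing $(n'', m_0) \in \R{supp}(\C H)$, and the configuration $n_0 \xleftarrow{a} n' \xrightarrow{a'} n''$ then witnesses $\{(n_0, m_0), (n'', m_0)\} \in \C E^1$, whence $\{(n_0, m_0, j_0), (n'', m_0, -j_0)\} \in \C{E_M}$ by involution-invariance of $\C M_0$, yielding (1b).

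Condition (2) is handled symmetrically from the target side. For an arrow $m_0 \xrightarrow{b} m'$ in $T^2$ with $\alpha := F_2(b)$, I equate coefficients of $w_{m'}$ on the two sides of $\alpha \cdot \C H(v_{n_0}) = \C H(\alpha \cdot v_{n_0})$. The left-hand side yields $\mu_{m_0}^{n_0} + \sum_{b' \neq b:\, m'' \xrightarrow{b'} m',\, F_2(b') = \alpha} \mu_{m''}^{n_0}$ and the right-hand side yields $\sum_{a:\, n_0 \xrightarrow{a} n''',\, F_1(a) = \alpha} \mu_{m'}^{n'''}$. The same non-vanishing dichotomy produces either (2a) from some $\mu_{m'}^{n'''} \neq 0$ via an arrow $n_0 \xrightarrow{a} n'''$, or (2b) from some $\mu_{m''}^{n_0} \neq 0$ via an arrow $m'' \xrightarrow{b'} m'$ with $m'' \neq m_0$, the configuration $m_0 \xrightarrow{b} m' \xleftarrow{b'} m''$ producing the requisite edge in $\C E^1$ and hence in $\C{E_M}$.

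The proof is essentially bookkeeping; the main obstacle is not conceptual but notational. One must carefully track which vertices the coefficient argument actually certifies to lie in $\R{supp}(\C H)$, hence in $\C M_0$, so that the arrows and edges it produces genuinely lie in $\C M_1$ and $\C{E_M}$; and one must invoke the tree structure of $T^1$ and $T^2$ at each step to guarantee the source/target distinctness conditions required by the definition of $\C E^1$.
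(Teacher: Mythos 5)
Your proof is correct and follows essentially the same route as the paper: both verify Conditions (1) and (2) of the completeness definition by comparing the coefficient of the relevant basis vector on the two sides of $\C H(\alpha\cdot v)=\alpha\cdot\C H(v)$, concluding that either an arrow-witness (case (a)) or a cancelling sign-flip partner (case (b)) must exist in $\R{supp}(\C H)$. The only cosmetic difference is that the paper splits on whether the common coefficient is zero or not, whereas you argue directly that some summand besides $\mu^{n_0}_{m_0}$ must be non-zero; these are logically equivalent.
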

\begin{proof}
Let $\C M=:(\C M_0,\C M_1,\C{E_M})$ and $(n,m,j)\in\C M_0=\R{supp}(\C H)\times\{-1,1\}$.\\

\noindent{}\textbf{Case 1.} Suppose $n'\in T^1_0$ and $(n'\xrightarrow{a}n)\in T^1_1$. Let $\gamma:=F_1(a)$ and $\C H(\gamma\cdot v_{n'})=\sum_{\underline{m}\in T^2_0}\mu_{\underline{m}}w_{\underline{m}}$.

Then exactly one of the following two cases holds.\\

\textbf{Subcase 1a.} $\mu_m\neq 0$.

Since $\C H$ is a homomorphism, we have $\C H(\gamma\cdot v_{n'})=\gamma\cdot\C H(v_{n'})$. This implies that $\C H(v_{n'})\neq 0$, which guarantees the existence of $m'\in T^2_0$ and $(m'\xrightarrow{b}m)\in T^2_1$ with $F_2(b)=\gamma$ and $(n',m')\in\R{supp}(\C H)$. As a result, we have $(n',m',j)\in\C M_0$ and $((n',m',j)\xrightarrow{(a,b,j)}(n,m,j))\in\C M_1$.\\

\textbf{Subcase 1b.} $\mu_m=0$.

Then there exist $n''\in T^1_0$ and $(n'\xrightarrow{b}n'')\in T^1_1$ with $F_1(b)=\gamma$ and $(n'',m)\in\R{supp}(\C H)$. As a result, we have $(n'',m,-j)\in\C M_0$ and $\{(n,m,j),(n'',m,-j)\}\in\C{E_M}$.\\

\noindent{}\textbf{Case 2.}  Suppose $m'\in T^1_0$ and $(m\xrightarrow{b}m')\in T^2_1$. Let $\beta:=F_2(b)$ and $\beta\cdot\C H(v_n)=\sum_{\underline{m}\in T^2_0}\mu_{\underline{m}}w_{\underline{m}}$.

Then exactly one of the following two cases holds.\\

\textbf{Subcase 2a.} $\mu_{m'}\neq 0$.

Since $\C H$ is a homomorphism, we have $\beta\cdot\C H(v_n)=\C H(\beta\cdot v_n)$. This implies that $\beta\cdot v_n\neq 0$, which guarantees the existence of $n'\in T^1_0$ and $(n\xrightarrow{a}n')\in T^1_1$ with $F_1(a)=\beta$ and $(n',m')\in\R{supp}(\C H)$. As a result, we have $(n',m',j)\in\C M_0$ and $((n,m,j)\xrightarrow{(a,b,j)}(n',m',j))\in\C M_1$.\\

\textbf{Subcase 2b.} $\mu_{m'}=0$.

Then there exist $m''\in T^2_0$ and $(m''\xrightarrow{a}m')\in T^2_1$ with $F_2(a)=\beta$ and $(n,m'')\in\R{supp}(\C H)$. As a result, we have $(n',m'',-j)\in\C M_0$ and $\{(n,m,j),(n,m'',-j)\}\in\C{E_M}$.
\end{proof}

Combining the above result with Proposition \ref{prop: complete R free from complete involution invariant}, we get an immediate consequence.

\begin{cor}\label{cor: complete R free from homo}
Let $\C H:M_1\to M_2$ be a homomorphism. Then there exists a complete $\C R[2]$-free subnetwork $\C M$ of $\C N[2]$ such that $\C M_0=\R{supp}(\C H)\times\{-1,1\}$.
\end{cor}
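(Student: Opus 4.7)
The plan is to combine the two preceding results directly, since the corollary is engineered to be essentially a one-line consequence. I would start by setting $\C V := \R{supp}(\C H) \subseteq \C N[1]_0$. Applying Proposition \ref{prop: support is complete} to the homomorphism $\C H$ gives that the induced subnetwork $\langle \pi^{-1}(\C V) \rangle$ of $\C N[2]$ is complete. This is where the hypothesis that $\C H$ is an actual homomorphism (as opposed to an arbitrary $\C K$-linear map) enters the argument: the case analysis in Proposition \ref{prop: support is complete} uses the identity $\C H(\gamma \cdot v_{n'}) = \gamma \cdot \C H(v_{n'})$ to show that every local configuration of arrows in $T^1$ and $T^2$ incident on a support vertex is witnessed either by an arrow or by an edge in $\langle \pi^{-1}(\C V) \rangle$.

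Having established completeness, I would feed $\langle \pi^{-1}(\C V) \rangle$ into the hypotheses of Proposition \ref{prop: complete R free from complete involution invariant}, producing a complete $\C R[2]$-free subnetwork $\C M$ of $\langle \pi^{-1}(\C V) \rangle$ satisfying $\C M_0 = \pi^{-1}(\C V)$. To close the argument, I would invoke Remark \ref{rmk: proj between network}, which tells us that the canonical projection $\pi : \C N[2] \to \C N[1]$ has fibers of cardinality exactly $2$ over each vertex; in particular, $\pi^{-1}(\C V) = \C V \times \{-1, 1\} = \R{supp}(\C H) \times \{-1, 1\}$, which is the desired equality for $\C M_0$.

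There is no genuine obstacle to overcome at this stage of the paper, since the substantive work was done in the two preceding propositions: Proposition \ref{prop: support is complete} handles the existence (completeness) side via a careful case split on the two subcases (arrow vs.\ edge) dictated by whether a certain coefficient $\mu_m$ vanishes, and Proposition \ref{prop: complete R free from complete involution invariant} handles the uniqueness ($\C R[2]$-free) side by constructing perfect matchings on each $\C R[2]$-system and glueing them together. The role of this corollary is simply to package those two results in the form required for the inductive argument on $|\R{supp}(\C H)|$ that drives the proof of \Cref{thm: main 1}.
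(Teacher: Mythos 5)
Your proposal is correct and is exactly the paper's argument: the paper derives this corollary as an immediate consequence of \Cref{prop: support is complete} (completeness of $\ang{\pi^{-1}(\R{supp}(\C H))}$) followed by \Cref{prop: complete R free from complete involution invariant} (extraction of a complete $\C R[2]$-free subnetwork with the same vertex set). The identification $\pi^{-1}(\R{supp}(\C H))=\R{supp}(\C H)\times\{-1,1\}$ is immediate from the definition of $\C N[2]$, so nothing further is needed.
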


We wish to obtain a generalised graph map $\C G$ such that $\R{supp}(\C{H_G})\subseteq\R{supp}(\C H)$.

\begin{lem}\label{lem}
Suppose $(M_1,M_2)$ is ghost-free and $\C H:M_1\to M_2$ is a non-zero homomorphism. Then there exists a generalised graph map $\C G$ such that $\C G_0\subseteq\R{supp}(\C H)\times\{-1,1\}$.
\end{lem}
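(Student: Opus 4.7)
By Corollary~\ref{cor: complete R free from homo}, there exists a complete $\C R[2]$-free subnetwork $\C M$ of $\C N[2]$ with $\C M_0=\R{supp}(\C H)\times\{-1,1\}$; since $\C H\neq 0$, $\C M$ is non-empty. Remark~\ref{rmk: connected component of complete R free} ensures that every connected component of $\C M$ is itself complete and $\C R[2]$-free. Because $(M_1,M_2)$ is ghost-free, it cannot be the case that \emph{every} connected component of $\C M$ is involution-invariant, for otherwise any non-empty component would be a ghost by \Cref{defn: ghost}. I therefore fix a connected component $\C M''$ of $\C M$ that is not involution-invariant and pick $(n_0,m_0,j_0)\in\C M''_0$ with $(n_0,m_0,-j_0)\notin\C M''_0$. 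This anchor vertex is what the algorithm needs to get started.

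My plan is to construct $\C G$ by a breadth-first exploration of $\C M''$ anchored at $(n_0,m_0,j_0)$. Throughout the exploration I maintain a growing subnetwork $\C G$ together with a partial function $\phi\colon\pi(\C G_0)\to\{-1,1\}$ so that $\C G_0=\{(n,m,\phi(n,m))\mid(n,m)\in\pi(\C G_0)\}$. At each vertex $(n,m,j)$ currently in $\C G_0$, \Cref{prop: uniqueness=Rfree} applied to $\C M''$ provides, for every arrow $n'\to n$ in $T^1$ and every arrow $m\to m'$ in $T^2$, a unique response link in $\C M''$ incident to $(n,m,j)$. I traverse each such response link, adding it and its other endpoint $(n^*,m^*,j^*)$ to $\C G$ whenever $(n^*,m^*)$ has not yet been visited, and I also record any response links whose other endpoint is already in $\C G_0$ with matching sign.

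The crux of the argument — and the step I expect to be the main obstacle — is verifying that the exploration never encounters a conflict, meaning that it never attempts to add a vertex $(n^*,m^*,j^*)$ while $(n^*,m^*,-j^*)$ already lies in $\C G_0$. I plan to argue by contradiction using the ghost-free hypothesis. A conflict would produce two BFS-paths in $\C M''$ starting at $(n_0,m_0,j_0)$ and ending at the two involution-partners above $(n^*,m^*)$; concatenating their $\pi$-projections yields a closed walk $W$ in $\pi(\C M'')$ based at $(n_0,m_0)$ along which an odd number of edges are traversed (since arrows preserve the sign coordinate while edges flip it). Lifting $W$ in $\C N[2]$ from $(n_0,m_0,j_0)$ therefore ends at $(n_0,m_0,-j_0)\notin\C M''_0$. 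Tracking where this lift is forced to leave $\C M''$ and iterating with the responses dictated by \Cref{prop: uniqueness=Rfree}, the set of vertices that both participate in $W$ and come in involution-pairs should assemble into a non-empty, connected, involution-invariant, complete, and $\C R[2]$-free subnetwork of $\C N[2]$, i.e. a ghost, contradicting the hypothesis.

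Once conflict-freeness is established, the resulting $\C G$ is non-empty (it contains $(n_0,m_0,j_0)$), connected (the BFS exploration produces a spanning subnetwork), involution-free (by the very construction of $\phi$), $\C R[2]$-free (as a subnetwork of $\C M''$), and complete (every demand at every vertex in $\C G_0$ has had its unique response incorporated). Thus $\C G$ is a generalised graph map by \Cref{defn: ggm}, and the containment $\C G_0\subseteq\C M''_0\subseteq\C M_0=\R{supp}(\C H)\times\{-1,1\}$ holds by construction, completing the proof.
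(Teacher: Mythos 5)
Your reduction to a single non-involution-invariant connected component $\C M''$ of the network from \Cref{cor: complete R free from homo}, and the choice of an anchor $(n_0,m_0,j_0)$ with $(n_0,m_0,-j_0)\notin\C M''_0$, match the paper's opening moves. The genuine gap is exactly at the step you flag as the crux: conflict-freeness of the BFS. Since every link of $\C M''$ is the unique response to some demand at each of its endpoints, your exploration necessarily visits all of $\C M''_0$, so your argument amounts to the claim that $\C M''$ itself is involution-free. Ghost-freeness does not give this: it only rules out a component that is \emph{entirely} involution-invariant, and a connected complete $\C R[2]$-free component can perfectly well contain both $(n^*,m^*,1)$ and $(n^*,m^*,-1)$ for some $(n^*,m^*)$ while still omitting $(n_0,m_0,-j_0)$. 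The reason is that the perfect matchings chosen in \Cref{prop: complete R free from complete involution invariant} for $\C R[2]$-systems with an odd number of vertices form an odd cycle in a hexagon, so a link from $(n,m,j)$ to $(n',m',j')$ need not be accompanied by the partner link from $(n,m,-j)$ to $(n',m',-j')$; this is precisely how a component can pick up an involution pair without being involution-invariant. Your proposed contradiction does not close this hole: the lifted closed walk ending at $(n_0,m_0,-j_0)$ simply exits $\C M''$ where such an odd matching was used, and the vertices along it have no reason to form a \emph{complete} (or even well-defined connected) subnetwork, so no ghost is produced and no contradiction with the hypothesis arises.

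The paper's proof handles this by surgery rather than by contradiction: before extracting a component, it re-chooses the matchings in the offending hexagons (Cases 1 and 2 in the proof of \Cref{lem}) so that the resulting network $\C M'$ satisfies Property $(P)$ -- links propagate the involution -- while remaining $\C R[2]$-free, and completeness is preserved because every deleted link is replaced by another link inside the same hexagon joining the same demands. Under Property $(P)$, any traversal from a vertex of an involution pair to the anchor would force $(n_0,m_0,-j_0)$ into the network, so the component of the anchor in $\C M'$ is automatically involution-free. If you want to salvage your approach, you would need to either prove that components of the unmodified $\overline{\C M}$ are involution-free (which the construction does not guarantee) or build the re-matching step into your BFS; as written, the ``should assemble into a ghost'' step is the missing idea.
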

\begin{proof}
Let $\C M:=(\C M_0,\C M_1,\C{E_M})$ be a connected component of the subnetwork $\overline{\C M}$ obtained by applying Corollary \ref{cor: complete R free from homo} to $\C H$. Since $(M_1,M_2)$ is ghost-free, we have that $\C M_0$ is not involution-invariant. Therefore, there exists $(n_0,m_0,j_0)\in\C M'_0$ such that $(n_0,m_0,-j_0)\notin\C M'_0$.

Now, we will make some modifications to $\C M$ to get a subnetwork $\C M'$ of $\C N[2]$ with $\C M'_0=\C M_0$ satisfying the following property for every $(n,m,j),(n',m',j')\in\C M_0$:
\begin{align}\tag{$P$}
\text{if there is a link from $(n,m,j)$ to $(n',m',j')$ in $\C M'$ and $(n,m,-j)\in\C M'_0$,}\\\text{then $(n',m',-j')\in\C M'_0$, and there is a link from $(n,m,-j)$ to $(n',m',-j')$ in $\C M'$.}\notag 
\end{align}

If $\C M$ satisfies Property $(P)$, then choose $\C M':=\C M$. Otherwise, using the notations of the proof of \Cref{prop: complete R free from complete involution invariant} used in the construction of $\overline{\C M}$, there exists an $\C R[2]$-system $\C S[2]$ relative to $\R{supp}(\C H)$ such that
\begin{itemize}
    \item $|\C S[2]_0|=2k$ for odd $k$;
    \item $\pi^{-1}(\{(n_{k-2},m_{k-2},),\ (n_{k-1},m_{k-1}),\ (n_k,m_k)\})\subseteq\C S[2]_0$ such that three links have been chosen in the hexagon spanned by the above-mentioned six vertices as shown in \Cref{fig:perfect match} in the process of obtaining a perfect matching; and
    \item at least two chosen links in such a hexagon appear in $\C M$.
\end{itemize}
Note that completeness ensures that $|\pi^{-1}(\{(n_{k-2},m_{k-2},),\ (n_{k-1},m_{k-1}),\ (n_k,m_k)\})\cap\C M_0|\in\{4,6\}$. Make the following modifications in the choice of links for such hexagons in each of such $\C R[2]$-system based on the following two cases to get $\C M'$.

\textbf{Case 1.} Only two chosen links in the hexagon appear in $\C M$.

Suppose the chosen links in $\C M$ for such a hexagon are the links from $(n,m,j)$ to $(n',m',j')$ and that from $(n,m,-j)$ to $(n'',m'',j'')$. Then remove these links and add the link from $(n',m',j')$ to $(n'',m'',j'')$.\\

\textbf{Case 2.} All three chosen links appear in $\C M$.

In this case, with the existing enumeration of $\C S[2]_0$, replace the existing links with the links in the subnetwork $\ang{\pi^{-1}\{(n_{k-1},m_{k-1}),(n_{k-2},m_{k-2})\}}$ of $\C N[2]$.

It is obvious that the subnetwork $\C M'$ of $\C N[2]$ is $\C R[2]$-free and satisfies Property $(P)$; however, it is not necessarily complete. Let $\C G:=(\C G_0,\C G_1,\C{E_G})$ be a connected component of  $\C M'$ containing $(n_0,m_0,j_0)$.\\

\noindent\textbf{Claim:} If $(n,m,1),(n,m,-1)\in\C M_0$ then $(n,m,j)\notin\C G_0$ for any $j\in\{-1,1\}$.\\
\noindent\textit{Proof of the claim:} Suppose not. Then, since $\C G$ is connected, there is a traversal from $(n,m,1)$ to $(n_0,m_0,j_0)$ in $\C M'$. Using Property $(P)$ repeatedly with $(n,m,-1)\in\C G_0$, we obtain $(n_0,m_0,-j_0)\in\C M'_0$, a contradiction to the choice of $(n_0,m_0,j_0)$, thus proving the claim.\\

We now show that $\C G$ is a generalised graph map.\\

\noindent\textbf{$\C G_0$ is involution-free}: Same argument as in the proof of the claim above.\\

\noindent\textbf{$\C G$ is complete}: Note that all the links from a vertex of $\C G_0$ to a vertex in $\C M_0\setminus\C G_0$ in $\C M$ have been replaced by a link between two vertices of $\C G_0$, and any link between two vertices in $\C G_0$ present in $\C M$ is already present in $\C G$. Therefore, in view of the above two points, the completeness of $\C M$ ensures the completeness of $\C G$.
\end{proof}

Now we are ready to state and prove the main result of this paper.
\mainone*
\begin{proof}
The proof is by induction on $k:=|\R{supp}(\C H)|$.

\textbf{Base Case.} $k=0$.

Then $\C H$ is the zero homomorphism.\\

\textbf{Induction Step.} $k>0$.

Assume the conclusion holds for any homomorphism $\C H'$ with $|\mathrm{supp}(\C H')|<k$.

By \Cref{lem}, there exists a generalised graph map $\C G$ such that $\R{supp}(\C G)\subseteq\R{supp}(\C H)$. For $(n,m)\in\R{supp}(\C G)$, let $\C H(v_n)=\sum_{\underline{m}\in T^2_0}\mu_{\underline{m}}w_{\underline{m}}$ so that $\mu_m\neq0$. For exactly one $j\in\{-1,1\}$, we have $(n,m,j)\in\C G_0$. Then the homomorphism $\C H':=\C H-j\mu_m\C{H_G}$ satisfies $(n,m)\notin\R{supp}(\C H')$ and $\R{supp}(\C H')\subseteq\R{supp}(\C H)$, which together imply $|\R{supp}(\C H')|<|\R{supp}(\C H)|=k$. Applying the induction hypothesis on $\C H'$, we get the desired conclusion for $\C H$.
\end{proof}

\begin{que}\label{ques: main1}
Does the conclusion of \Cref{thm: main 1} remain true when the ghost-free hypothesis on the pair $(M_1,M_2)$ is dropped?
\end{que}

We believe that the answer to the above question is in the affirmative.

\section{Indecomposability of some generalised tree modules}\label{sec: indecomposability}
Let $T:=T^1=T^2$ and $F:=F_1=F_2$ so that $M:=F_\lambda(V_T)$ is a generalised tree module. The basis elements of $M$ will be denoted $\{v_n\}_{n\in T_0}$. Using \Cref{thm: main 1}, we provide a checkable sufficient condition (\Cref{thm: main 2}) for the indecomposability of $M$. In the direction of the converse of \Cref{thm: main 2}, a sufficient condition for some generalised tree modules to be decomposable is stated in \Cref{thm: conv main2}.

\maintwo*

If $\C H:M\to M$ is an endomorphism, then we expand $\C H(v_n)$ in terms of the standard basis vectors as $$\C H(v_n):=\sum_{(n,m)\in T_0\times T_0}\mu^{\C H}_{nm}v_m\text{ for }n\in T_0.$$ 

Before providing a proof of \Cref{thm: main 2}, we provide as consequences of \Cref{thm: main 1} two statements imposing conditions on the coefficients of the form $\mu^{\C H}_{nm}$ that are equivalent to Conditions (1) and (2) of the hypotheses of \Cref{thm: main 2}.

\begin{rmk}\label{rmk: hyp1 thB equiv}
Under the ghost-free hypothesis on the pair $(M,M)$, Condition (1) of the hypotheses of \Cref{thm: main 2} is equivalent to the following statement: for every subtree of $T$ of the form $n_1\xleftarrow{a}n\xrightarrow{b}n_2$ or $n_1\xrightarrow{a}n\xleftarrow{b}n_2$ with $n_1\neq n_2$ and satisfying $F(a)=F(b)$, and for any endomorphism $\C H:M\to M$, we have $\mu^{\C H}_{n_1n_2}=0$.
\end{rmk}

\begin{rmk}\label{rmk: hyp2 thB equiv}
Under the ghost-free hypothesis on the pair $(M,M)$, Condition (2) of the hypotheses of \Cref{thm: main 2} is equivalent to the following statement: for $n_1\neq n_2$ in $T_0$, and for any pair of endomorphisms $\C H,\C H':M\to M$ we have $\mu^{\C H}_{n_1n_2}\mu^{\C H'}_{n_2n_1}=0$.
\end{rmk}

\begin{proof}[Proof of Theorem B]
Let $\C I\in\R{End}_\Lambda(M)$ be a non-zero idempotent endomorphism of $M$. Choose $n_0\in T_0$. Without loss of generality, assume that $(n_0,n_0)\in\R{supp}(\C I)$ (for otherwise, $(n_0,n_0)$ lies in the support of the non-zero idempotent endomorphism $(\B 1_M-\C I)$, and we can work with $(\B 1_M-\C I)$ instead). In other words, $\mu^{\C I}_{n_0n_0}\neq0$. Then $\mu^{\C I}_{n_0n_0}=\mu^{\C I^2}_{n_0n_0}=\sum_{m\in T_0}\mu^{\C I}_{n_0m}\mu^{\C I}_{mn_0}=(\mu^{\C I}_{n_0n_0})^2$, where the third equality follows from \Cref{rmk: hyp2 thB equiv}. Since $\C K$ is a field, we conclude $\mu^{\C I}_{n_0n_0}=1$.

We will now show that $\mu^{\C I}_{nn}=1$ for every $n\in T_0$. The proof follows by induction on the tree $T$.\\

\textbf{Base Case.} We already have $\mu^{\C I}_{n_0n_0}=1$.\\

\textbf{Inductive Step.} Suppose we have $\mu^{\C I}_{nn}=1$. There are two cases to proceed to vertices adjacent to $n$ in $T$.\\

\textbf{Case 1.} There exists $(n'\xrightarrow{a}n)\in T_1$.

Let $\gamma:=F(a)$. Since $\C I$ is a homomorphism, we have $\gamma\cdot\C I(v_{n'})=\C I(\gamma\cdot v_{n'})$.

Suppose $\gamma\cdot v_{n'}=v_n+\sum_{i=1}^kv_{n_i}$ and $\C I(\gamma\cdot v_{n'})=\sum_{m\in T_0}\mu_mv_m$. Then $\mu_n=1+\sum_{i=1}^k \mu_{n_in}^{\C I}$. If $\mu_n\neq1$, then there exists $(n'\xrightarrow{b}n_i)\in T_1$ with $F(b)=\gamma$ and $\mu^{\C I}_{n_in}\neq0$ for some $i\in\{1,\cdots,k\}$, which contradicts Condition (1) of the hypotheses of the theorem in view of \Cref{rmk: hyp1 thB equiv}. Thus, $\mu_n=1$, and hence $\C I(v_{n'})\neq 0$.

Similarly, if $\mu^{\C I}_{n'n'}\neq1$, there exists $n''(\neq n')\in T_0$ and $(n''\xrightarrow{c}n)\in T_1$ with $F(c)=\gamma$ such that $\mu^{\C I}_{n'n''}\neq0$, a contradiction to Condition (1) of the hypotheses of the theorem in view of \Cref{rmk: hyp1 thB equiv}. Therefore, $\mu^{\C I}_{n'n'}=1$.\\

\textbf{Case 2.} There exists $(n\xrightarrow{a}n')\in T_1$.

Let $\gamma:=F(a)$. Since $\C I$ is a homomorphism, we have $\gamma\cdot\C I(v_{n})=\C I(\gamma\cdot v_{n})$.

Suppose $\gamma^{-1}(n'):=\{m\in T_0\mid (m\xrightarrow{b}n')\in T_1, F(b)=\gamma\}$ and $\gamma\cdot\C I(v_n)=\sum_{m\in T_0}\mu_mv_m$. Then $\mu_{n'}=1+\sum_{n''\in\gamma^{-1}(n')}\mu^{\C I}_{nn''}$. If $\mu_{n'}\neq1$, then there exists $n''\in\gamma^{-1}(n')$ such that  $\mu^{\C I}_{nn''}\neq0$. By definition of $\gamma^{-1}(n')$, $(n''\xrightarrow{b}n')\in T_1$ satisfies $F(b)=\gamma$, which contradicts Condition (1) of the hypotheses of the theorem in view of \Cref{rmk: hyp1 thB equiv}. Thus $\mu_{n'}=1$, and hence $\gamma\cdot v_n\neq0$.

Similarly, if $\mu^{\C I}_{n'n'}\neq1$, then there exists $n'''(\neq n')\in T_0$ and $(n\xrightarrow{c}n''')\in T_1$ with $F(c)=\gamma$ such that $\mu^{\C I}_{n'''n'}\neq0$, a contradiction to the Condition (1) of the hypotheses of the theorem in view of \Cref{rmk: hyp1 thB equiv}. Therefore, $\mu_{n'n'}^{\C I}=1$.\\

Therefore, we have proved that $\mu^{\C I}_{nn}=1$ for every $n\in T_0$. Since the only possible eigenvalues of an idempotent matrix are $0$ or $1$, and the trace of an idempotent matrix equals its rank, we conclude that $\C I$ is a full-rank idempotent linear map, and hence invertible. It then immediately follows that $\C I=\B 1_M$. Then \Cref{prop: indecomposability condn assem} yields the indecomposability of $M$.
\end{proof}

Here is an immediate application of the theorem above. Suppose $M:=F_{\lambda}(V_T)$ is a tree module. Recall the definition of a word formed by arrows of the quiver $Q$ and their inverses from the paragraph before \Cref{rmk: traversal in N1 has a unique string}. For every ordered pair $(n,n')$ of vertices of $T$, let $\s w(n,n')$ be the word defined by the $F$-image of the shortest zigzag in $T$ from $n$ to $n'$--the tree condition ensures that $\s w(n,n')$ is reduced. Clearly $\s w(n,n')^{-1}=\s w(n',n)$ and $\s w(n,n)$ is the lazy path at $F(n)$. Moreover, given a reduced word $\s w$ and $n\in T_0$ (resp. $n'\in T_0$), there exists at most one $n'\in T_0$ (resp. $n\in T_0$) such that $\s w=\s w(n,n')$.

\begin{cor}\label{cor: indecomposable gtm - 1 tree}
If $M:=F_{\lambda}(V_T)$ is a tree module, then $M$ is indecomposable.
\end{cor}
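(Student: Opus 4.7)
The plan is to derive the corollary from Theorem~\ref{thm: main 2} by showing that every tree module $M := F_\lambda(V_T)$ satisfies all three of its hypotheses. The key simplification is that the tree module condition (item~(\ref{tree module condition}) of Definition~\ref{defn: tree module}) is precisely the negation of the configurations appearing in the definition of $\C E^1$, so $\C E^1 = \emptyset$ and consequently $\C E^2 = \emptyset$, $\triangle = \emptyset$, and $\C R[1] = \C R[2] = \emptyset$.

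First I would observe that $(M, M)$ is ghost-free: since the arrows of $\C N[2]$ preserve the $j$-coordinate and no edges exist, every connected subnetwork of $\C N[2]$ lies in a single sheet $\C N[1] \times \{+1\}$ or $\C N[1] \times \{-1\}$, while a non-empty involution-invariant subnetwork meets both sheets, so the two conditions together force emptiness. Condition~(1) of Theorem~\ref{thm: main 2} holds vacuously, since the subtree configurations $n_1 \xleftarrow{a} n \xrightarrow{b} n_2$ and $n_1 \xrightarrow{a} n \xleftarrow{b} n_2$ with $F(a) = F(b)$ and $n_1 \neq n_2$ are forbidden by the tree module condition.

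The main work is Condition~(2). Assume, for contradiction, that $(n_1, n_2, j) \in \C G_0$ and $(n_2, n_1, j') \in \C G'_0$ for generalised graph maps $\C G, \C G'$ with $n_1 \neq n_2$. Replacing $\C G$ or $\C G'$ by its negative (cf.~\Cref{rmk: -generalised graph map}) as needed, we may take $j = j' = 1$, and both $\C G$ and $\C G'$ then lie in the $j = +1$ sheet of $\C N[2]$. In the absence of edges, the completeness requirements of \Cref{defn: complete subnetwork} reduce to those appearing in \Cref{thm: c-b}, so $\C G$ and $\C G'$ correspond to Crawley-Boevey graph maps $(\Tfac, \Tim, \Phi)$ and $(\Tfac', \Tim', \Phi')$ with $\Phi(n_1) = n_2$ and $\Phi'(n_2) = n_1$ respectively. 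Set $\s w_0 := \s w(n_1, n_2)$, which is a non-empty reduced word because $n_1 \neq n_2$; the identity $F \circ \Phi = F$ together with the uniqueness of vertices realizing a prescribed reduced displacement (stated in the paragraph preceding the corollary) forces $\s w(n, \Phi(n)) = \s w_0$ for every $n \in \Tfac_0$, and symmetrically $\s w(n, \Phi'(n)) = \s w_0^{-1}$ for every $n \in \Tfac'_0$.

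The hard part is to derive a contradiction from this data. My intended route is an iteration argument. I would track the sequence $n^{(0)} := n_1$, $n^{(1)} := n_2 = \Phi(n_1)$, $n^{(2)} := \Phi(n_2)$, and so on, where each step extends by the shift $\s w_0$. These iterates are pairwise distinct whenever defined, because an equality $n^{(k)} = n^{(\ell)}$ with $k > \ell$ would yield $\s w_0^{k-\ell} = e$ in the free group on the arrows of $Q$, contradicting $\s w_0 \neq e$ and the fact that free groups are torsion-free. The obstacle is to ensure that each iterate lies in the domain of the next application of $\Phi$, which I would address by alternating with $\Phi'$ and exploiting the factor closure of $\Tfac, \Tfac'$ together with the image closure of $\Tim, \Tim'$, and using that $n_1 \in \Tfac \cap \Tim'$ and $n_2 \in \Tfac' \cap \Tim$ to propagate the shift along the unique path in $T$ between successive iterates. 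Once the orbit is seen to be infinite, the finiteness of $T_0$ supplies the contradiction, and \Cref{thm: main 2} then delivers the indecomposability of $M$.
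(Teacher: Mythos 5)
Your overall strategy coincides with the paper's: both reduce to \Cref{thm: main 2} by observing that the tree module condition forces $\C E^1=\emptyset$ (hence no edges, no triangles, no ghosts, and vacuous Condition (1)), and both dispose of Condition (2) by converting a putative pair $\C G,\C G'$ into a ``shift by $\s w_0:=\s w(n_1,n_2)$'' and playing an iteration of that shift against the finiteness of $T_0$. The preliminary steps are all sound, and your use of torsion-freeness of the free group to get distinctness of the iterates is in fact slightly cleaner than the paper's device of choosing $(n_1,n_2)$ with minimal path length so that powers of $\s w_0$ stay reduced.

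The gap is exactly where you flag it: you never establish that the iteration is defined, and the alternation you sketch does not obviously achieve this. Note that $\Phi$ shifts by $\s w_0$ but is defined on $\Tfac_0\ni n_1$, while $(\Phi')^{-1}$ also shifts by $\s w_0$ but is defined on $\Tim'_0\ni n_1$; neither domain is known to contain $n_2$, so neither $\Phi(n_2)$ nor $(\Phi')^{-1}(n_2)$ is available and simple alternation along the orbit stalls after one step. What the paper does is stronger than propagating ``along the path between successive iterates'': starting from the pair $(n_1,n_2)$ it applies the completeness of $\C G$ and of $\C G'$ \emph{simultaneously at every vertex of $T$} --- $\C G$ extends the pair across arrows entering $n_1$ and leaving $n_2$, while $\C G'$ extends it across arrows entering $n_2$ and leaving $n_1$, so together they cover every arrow incident to either coordinate --- and uses the tree module condition to see that the two extensions agree on the new vertex. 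Propagating over all of $T$ (not merely along the orbit) produces a globally defined quiver automorphism $\nu$ of $T$ with $\s w(x,\nu(x))=\s w_0$ for every $x\in T_0$, and only then is the sequence $\nu^k(n_1)$ legitimate. With that lemma supplied, your closing argument goes through verbatim; as written, however, the proof is incomplete at its central step.
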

\begin{proof}
The non-existence of subtrees of the form $n_1\xleftarrow{a}n\xrightarrow{b}n_2$ or $n_1\xrightarrow{a}n\xleftarrow{b}n_2$ in $T$ satisfying $F(a)=F(b)$ implies that Condition (1) of the hypotheses of \Cref{thm: main 2} is vacuously satisfied for $M$. Consequently, the non-existence of edges in $\C N[1]$, and hence in $\C N[2]$, implies the non-existence of a traversal from $(n,m,j)$ to $(n,m,-j)$ for any $(n,m,j)\in\C N[2]$. Therefore, the pair $(M,M)$ is ghost-free.

We will now show that Condition (2) of the hypotheses of \Cref{thm: main 2} holds for $M$. If possible, let $n_1\neq n_2\in T_0$ be such that there exist generalised graph maps $\C G$ and $\C G'$ with $(n_1,n_2)\in\pi(\C G_0)$ and $(n_2,n_1)\in\pi(\C G'_0)$. Without loss of generality, assume that the length of the path between $n_1$ and $n_2$ in the underlying undirected graph of $T$ is minimal among such pairs. Starting with the pair $(n_1,n_2)$ and applying Conditions (1a) and (2a) of \Cref{defn: complete subnetwork} repeatedly to both $\C G$ and $\C G'$, we get a quiver automorphism $\nu$ of $T$ taking $n_1$ to $n_2$. Let $\s w:=\s w(n_1,n_2)$.

Suppose $\s w=\alpha_n\cdots\alpha_1$. The minimality of the length of the path between $n_1$ and $n_2$ in the underlying graph of $T$ forces that $\alpha_n\neq\alpha_1^{-1}$ so that $\s w^2$ (or equivalently, $\s w^m$ for any $m\in\mathbb N^+$) is reduced. As a result, for $l<k$ we have $\s w(\nu^l(n_1),\nu^k(n_1))=\s w^{k-l}$. Given finiteness of $T_0$, there exist $l<k$ such that $\nu^k(n_1)=\nu^l(n_1)$. But then $\s w^{k-l}=\s w(\nu^k(n_1),\nu^k(n_1))$, and the latter is the lazy path at $F(\nu^k(n_1))$. This implies that $\s w$ itself is the lazy path at $F(n_1)$, and hence $n_1=n_2$, a contradiction. The conclusion then follows from \Cref{thm: main 2}.
\end{proof}

We defer the applications of this result to prove indecomposability of certain generalised tree modules to \S~\ref{sec:Dn indecomposable}.

We have the following result in the direction of the converse of \Cref{thm: main 2}.

\begin{thm}\label{thm: conv main2}
(A) Suppose $T$ is a tree as shown in \Cref{fig: conv main2}. Denote by $\overline T^j$ the induced tree $\ang{T^j_0\cup\{0\}}$ for each $1\leq j\leq k$. Further suppose $F:T\to( Q,\rho)$ is a bound quiver morphism satisfying the following conditions:
\begin{enumerate}
    \item \label{CB condition for Tj} $F(c)\neq F(d)$ whenever $c,d\in\overline T^j_1$ with $c\neq d$, and $s(c)=s(d)$ or $t(c)=t(d)$ for every $1\leq j\leq k$;
    \item \label{same ai} $F(a_1)=F(a_j)$ for all $1\leq j\leq k$, and if $b\in T_1$ with $s(b)=0$ and $F(b)=F(a_1)$ then $b=a_j$ for some $1\leq j\leq k$.
\end{enumerate}
If there exist distinct $n_1,n_2\in\{1,\cdots,k\}$ and a generalised graph map $\C G$ with $(n_1,n_2)\in\pi(\C G_0)$, then $M$ is decomposable.

(B) Suppose $T$ is a tree as shown in \Cref{fig: conv main2} but with the direction of $a_j$ reversed for every $j\in\{1,\cdots,k\}$. Denote by $\overline T^j$ the induced tree $\ang{T^j_0\cup\{0\}}$ for each $1\leq j\leq k$. Suppose $F:T\to( Q,\rho)$ is a bound quiver morphism satisfying the following conditions:
\begin{enumerate}
    \item \label{CB condition for Tj2} $F(c)\neq F(d)$ whenever $c,d\in\overline T^j_1$ with $c\neq d$, and $s(c)=s(d)$ or $t(c)=t(d)$ for every $1\leq j\leq k$;
    \item \label{same ai2} $F(a_1)=F(a_j)$ for all $1\leq j\leq k$, and if $b\in T_1$ with $t(b)=0$ and $F(b)=F(a_1)$ then $b=a_j$ for some $1\leq j\leq k$.
\end{enumerate}
If there exist distinct $n_1,n_2\in\{1,\cdots,k\}$ and a generalised graph map $\C G$ with $(n_1,n_2)\in\pi(\C G_0)$, then $M$ is decomposable.
\end{thm}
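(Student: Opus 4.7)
The plan is to build a non-trivial idempotent in $\R{End}_\Lambda(M)$ from the data of $\C G$ and invoke \Cref{prop: indecomposability condn assem}. I will work out case (A) in detail; case (B) follows by reversing every arrow in $T$, which swaps the roles of ``into'' and ``out of'' in the completeness conditions but leaves the strategy intact.

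Starting at $(n_1, n_2, j_0) \in \C G_0$, I propagate completeness upward. Hypothesis \ref{same ai} together with the absence of arrows into $0$ in $T$ forbids Condition (1b) of \Cref{defn: complete subnetwork} at $(n_1, n_2, j_0)$ with respect to $a_{n_1}$ (no edge of $\C N[1]$ is incident to $(0,0)$), so by \Cref{prop: uniqueness=Rfree} Condition (1a) forces $(0,0,j_0) \in \C G_0$ together with the arrow $(a_{n_1}, a_{n_2}, j_0) \in \C G_1$. Completeness at $(0,0,j_0)$, where Condition (2b) is likewise unavailable, then produces via unique Condition (2a) arrows a function $\sigma \colon \{1,\ldots,k\} \to \{1,\ldots,k\}$ with $(\sigma(j), j, j_0) \in \C G_0$ and $\sigma(n_2) = n_1$. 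Hypothesis \ref{CB condition for Tj} (the tree-module condition inside each $\overline T^j$) now allows me to run the classical \Cref{thm: c-b} argument on each pair $(\overline T^{\sigma(j)}, \overline T^j)$: the tree-module condition kills every edge-alternative of completeness inside the subtrees, so arrow-alternatives are forced, and induction on depth upgrades the matching $n \mapsto $ (the unique $m$ with $(n,m,j_0) \in \C G_0$) to an $F$-preserving bound quiver isomorphism $\phi_j \colon \overline T^{\sigma(j)} \to \overline T^j$.

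The key output is $\phi_{n_2} \colon \overline T^{n_1} \to \overline T^{n_2}$. Define a bound quiver automorphism $\psi \colon T \to T$ by $\psi|_{\overline T^{n_1}} := \phi_{n_2}$, $\psi|_{\overline T^{n_2}} := \phi_{n_2}^{-1}$, and $\psi|_{\overline T^j} := \mathrm{id}_{\overline T^j}$ for $j \not\in \{n_1, n_2\}$. This is consistent at $0$ (each $\phi_j$ fixes $0$), satisfies $F \circ \psi = F$ and $\psi^2 = \mathrm{id}_T$, and differs from $\mathrm{id}_T$ because $\psi(n_2) = n_1 \neq n_2$. The induced endomorphism $\C H_\psi \in \R{End}_\Lambda(M)$, defined on the canonical basis by $v_n \mapsto v_{\psi(n)}$, is a homomorphism satisfying $\C H_\psi^2 = \B 1_M$ and $\C H_\psi \neq \B 1_M$; hence $\C I := \tfrac{1}{2}(\B 1_M + \C H_\psi)$ is a non-trivial idempotent in $\R{End}_\Lambda(M)$ (where $\R{char}(\C K) \neq 2$ is crucial), and decomposability of $M$ follows from \Cref{prop: indecomposability condn assem}.

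The principal obstacle is the inductive extraction of the family $\{\phi_j\}$ out of $\C G$, and in particular the pairwise isomorphism $\phi_{n_2}$. At every step of the induction, hypothesis \ref{CB condition for Tj} is needed to rule out the edge-alternatives (1b), (2b) of \Cref{defn: complete subnetwork}: the tree-module condition on $\overline T^{\sigma(j)}$ and $\overline T^j$ prevents two distinct arrows with a common source or common target from sharing an $F$-image, so the only way completeness can be satisfied is by the arrow-alternative providing a unique matching vertex. A separate bookkeeping check, leaning on $\C R[2]$-freeness of $\C G$ through the hexagons above cliques in $\C N[1]$ whose vertices share a common second coordinate in $\{1,\ldots,k\}$, is needed to ensure that the matchings obtained in different pairs $(\overline T^{\sigma(j)}, \overline T^j)$ do not interfere. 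Apart from this, the argument is a direct adaptation of the classical proof of \Cref{thm: c-b} to the present generalised setting, with the failure of the tree-module condition at the vertex $0$ being exactly what is exploited (rather than avoided) to produce the non-trivial $\psi$.
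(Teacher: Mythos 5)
Your overall strategy (produce a non-trivial idempotent and invoke \Cref{prop: indecomposability condn assem}) matches the paper's, but the route you take through $\C G$ has two genuine gaps, the second of which is fatal to the construction of $\psi$.

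First, the claim that Condition (1b) of \Cref{defn: complete subnetwork} is unavailable at $(n_1,n_2,j_0)$ with respect to $a_{n_1}$ is false. The edge required by (1b) is not incident to $(0,0)$; it is the edge $\{(n_1,n_2,j_0),(l,n_2,-j_0)\}$ arising from the configuration $n_1\xleftarrow{a_{n_1}}0\xrightarrow{a_l}l$ with $F(a_{n_1})=F(a_l)$, which exists in $\C E^2$ precisely because of hypothesis \ref{same ai}. A generalised graph map can satisfy completeness at $(n_1,n_2,j_0)$ via such an edge and never contain $(0,0,j_0)$: for $Q=\B A_2$ and $T$ the star $1\xleftarrow{a_1}0\xrightarrow{a_2}2$ with $F(a_1)=F(a_2)=\B a$, the induced subnetwork on $\{(1,2,1),(2,2,-1)\}$ with its single edge is a generalised graph map containing $(1,2)$, and $(0,0,\pm1)$ is not among its vertices. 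So your propagation to $(0,0,j_0)$, and hence the function $\sigma$, is not justified.

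Second, and more seriously, completeness only constrains arrows \emph{into} the first coordinate and arrows \emph{out of} the second coordinate, so propagating from $(n_1,n_2,j_0)$ inside the subtrees yields exactly a graph map in the sense of \Cref{thm: c-b}: a quiver isomorphism from a \emph{factor} subtree of $T^{n_1}$ onto an \emph{image} subtree of $T^{n_2}$. It does not upgrade to an isomorphism $\overline T^{n_1}\to\overline T^{n_2}$, and indeed $\overline T^{n_1}$ and $\overline T^{n_2}$ need not be isomorphic at all. Take $Q=\B1\xrightarrow{\alpha}\B2\xrightarrow{\beta}\B3\xrightarrow{\gamma}\B4$ and $T$ with branches $0\xrightarrow{a_1}1\to1'$ and $0\xrightarrow{a_2}2\to2'\to2''$, with $F(a_1)=F(a_2)=\alpha$, $F(1\to1')=F(2\to2')=\beta$, $F(2'\to2'')=\gamma$. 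The induced subnetwork on $\{(2,1,1),(1,1,-1),(2',1',1),(1',1',-1)\}$ is a generalised graph map containing $(2,1)$, all hypotheses of the theorem hold, yet the two branches have different cardinalities, so no involution $\psi$ swapping them exists and $\tfrac12(\B1_M+\C H_\psi)$ cannot be formed. The paper avoids both problems by discarding the behaviour of $\C G$ near $0$ entirely: it keeps only $\C G\cap\C N'[2]$ (a graph map between the tree modules $F_\lambda(V_{T^{n_1}})$ and $F_\lambda(V_{T^{n_2}})$, using hypothesis \ref{CB condition for Tj}), glues it to the diagonal on $T_0\setminus T^{n_2}_0$ together with the single arrow $(a_{n_1},a_{n_2},1)$, and checks that the resulting homomorphism $\C I$ (which acts by $v_n\mapsto v_n+v_{\Phi(n)}$ on the matched part of $T^{n_1}$, kills $T^{n_2}$, and is the identity elsewhere) is a non-trivial idempotent. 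If you want to salvage your argument, you would need to replace the involution by an idempotent of this asymmetric ``project away $T^{n_2}$'' type, which requires only the partial graph map you actually obtain.
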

\begin{figure}[H]
\begin{center}
\begin{tikzpicture}[x=0.7pt,y=0.5pt,yscale=-1,xscale=1]
%uncomment if require: \path (0,486); %set diagram left start at 0, and has height of 486

%Straight Lines [id:da9369339943885218] 
\draw    (310,206) -- (211.64,274.85) ;
\draw [shift={(210,276)}, rotate = 325.01] [color={rgb, 255:red, 0; green, 0; blue, 0 }  ][line width=0.75]    (10.93,-3.29) .. controls (6.95,-1.4) and (3.31,-0.3) .. (0,0) .. controls (3.31,0.3) and (6.95,1.4) .. (10.93,3.29)   ;
%Straight Lines [id:da6855107299499983] 
\draw    (340,206) -- (435.39,276.81) ;
\draw [shift={(437,278)}, rotate = 216.59] [color={rgb, 255:red, 0; green, 0; blue, 0 }  ][line width=0.75]    (10.93,-3.29) .. controls (6.95,-1.4) and (3.31,-0.3) .. (0,0) .. controls (3.31,0.3) and (6.95,1.4) .. (10.93,3.29)   ;
%Straight Lines [id:da10904780078722065] 
\draw    (320,206) -- (296.66,273.11) ;
\draw [shift={(296,275)}, rotate = 289.18] [color={rgb, 255:red, 0; green, 0; blue, 0 }  ][line width=0.75]    (10.93,-3.29) .. controls (6.95,-1.4) and (3.31,-0.3) .. (0,0) .. controls (3.31,0.3) and (6.95,1.4) .. (10.93,3.29)   ;
%Straight Lines [id:da5583491951488597] 
\draw  [dash pattern={on 4.5pt off 4.5pt}]  (330,206) -- (342.64,276.03) ;
\draw [shift={(343,278)}, rotate = 259.77] [color={rgb, 255:red, 0; green, 0; blue, 0 }  ][line width=0.75]    (10.93,-3.29) .. controls (6.95,-1.4) and (3.31,-0.3) .. (0,0) .. controls (3.31,0.3) and (6.95,1.4) .. (10.93,3.29)   ;
%Shape: Ellipse [id:dp8964845375558088] 
\draw   (258,338) .. controls (258,304.86) and (273.67,278) .. (293,278) .. controls (312.33,278) and (328,304.86) .. (328,338) .. controls (328,371.14) and (312.33,398) .. (293,398) .. controls (273.67,398) and (258,371.14) .. (258,338) -- cycle ;
%Shape: Ellipse [id:dp40262411772315376] 
\draw   (160,336) .. controls (160,302.86) and (175.67,276) .. (195,276) .. controls (214.33,276) and (230,302.86) .. (230,336) .. controls (230,369.14) and (214.33,396) .. (195,396) .. controls (175.67,396) and (160,369.14) .. (160,336) -- cycle ;
%Shape: Ellipse [id:dp47181221847018096] 
\draw   (420,336) .. controls (420,302.86) and (435.67,276) .. (455,276) .. controls (474.33,276) and (490,302.86) .. (490,336) .. controls (490,369.14) and (474.33,396) .. (455,396) .. controls (435.67,396) and (420,369.14) .. (420,336) -- cycle ;
%Shape: Ellipse [id:dp021294210468919328] 
\draw   (285,143) .. controls (285,109.86) and (300.67,83) .. (320,83) .. controls (339.33,83) and (355,109.86) .. (355,143) .. controls (355,176.14) and (339.33,203) .. (320,203) .. controls (300.67,203) and (285,176.14) .. (285,143) -- cycle ;

% Text Node
\draw (315,183.4) node [anchor=north west][inner sep=0.75pt]    {$0$};
% Text Node
\draw (197,278.4) node [anchor=north west][inner sep=0.75pt]    {$1$};
% Text Node
\draw (442,279.4) node [anchor=north west][inner sep=0.75pt]    {$k$};
% Text Node
\draw (287,278.4) node [anchor=north west][inner sep=0.75pt]    {$2$};
% Text Node
\draw (330,278.4) node [anchor=north west][inner sep=0.75pt]    {$\cdots $};
% Text Node
\draw (181,398.4) node [anchor=north west][inner sep=0.75pt]    {${T}^{1} \ $};
% Text Node
\draw (283,398.4) node [anchor=north west][inner sep=0.75pt]    {${T}^{2} \ $};
% Text Node
\draw (443,398.4) node [anchor=north west][inner sep=0.75pt]    {${T}^{k} \ $};
% Text Node
\draw (258,212.4) node [anchor=north west][inner sep=0.75pt]    {$a_{1}$};
% Text Node
\draw (288,232.4) node [anchor=north west][inner sep=0.75pt]    {$a_{2}$};
% Text Node
\draw (375,213.4) node [anchor=north west][inner sep=0.75pt]    {$a_{k}$};
% Text Node
\draw (315,60.4) node [anchor=north west][inner sep=0.75pt]    {$T^{0} \ $};
\end{tikzpicture}
\end{center}
    \caption{Tree $T$ used in \Cref{thm: conv main2}(A)}
    \label{fig: conv main2}
\end{figure}
\begin{proof}
(A) In view of \Cref{prop: indecomposability condn assem}, it is sufficient to produce a non-trivial idempotent homomorphism $\C I:M\to M$. By the hypotheses, we have a generalised graph map $\C G$ and distinct $n_1,n_2\in\{1,\cdots,k\}$ such that $(n_1,n_2)\in\pi(\C G_0)$. In view of \Cref{rmk: -generalised graph map}, without loss of generality, let $(n_1,n_2,1)\in\C G_0$. Let $\C N'[2]$ be the 2-covering network associated with the pair $(M'_1:=F_\lambda(V_{T^{n_1}}),M'_2:=F_\lambda(V_{T^{n_2}}))$, which is a subnetwork of $\C N[2]$ associated with the pair $(M,M)$. Note that Condition (1) for the quiver morphism $F$ implies that $M'_1$ and $M'_2$ are tree modules.\\

\noindent\textbf{Claim:} The subnetwork $\C G\cap\C N'[2]$ is a graph map from $M'_1$ to $M'_2$.\\
\textit{Proof of the claim.}
Recall from the beginning of \S~\ref{sec: ggm} an alternate description of a graph map as a subnetwork of $\C N[2]$. The non-existence of an incoming arrow $a(\neq a_1)$ at $n_1$ with $F(a)=F(a_1)$, thanks to Condition (1) for the quiver morphism $F$, guarantees the existence of a unique $m'$ (resp. $n'$) for every $(n'\xrightarrow{a}n)$ in $T_1^{n_1}$ (resp. for every $(m\xrightarrow{b}m')$ in $T_1^{n_2}$) such that $(n',m',1)\in\C G_0$ starting at $(n_1,n_2,1)\in\C G_0$ thereby proving the claim.

Consider a subnetwork $\C G':=(\C G'_0,\C G'_1,\C E_{\C G'})$ of $\C N[2]$ defined as follows:
\begin{align*}
    \C G'_0&:=(\C G\cap\C N'[2])_0\cup\{(n,n,1)\in\C N[2]_0\mid n\in T_0\setminus{T}_0^{n_2}\};\\
    \C G'_1&:=(\C G\cap\C N'[2])_1\cup\{(a,a,1)\in\C N[2]_1\mid a\in T_1\setminus\overline T^{n_2}_1\}\cup\{(a_{n_1},a_{n_2},1)\};\text{ and}\\
    \C{E_{G'}}&:=\emptyset.
\end{align*}

We will now show that $\C G'$ is a generalised graph map from $M$ to $M$.\\

\noindent\textbf{$\C G'$ is involution-free}: Since $\C G\cap\C N'[2]$ is a graph map as shown in the claim, and $(n_1,n_2,1)\in(\C G\cap\C N'[2])_0$, we have that $(n,m,-1)\notin(\C G\cap\C N'[2])_0$ for any $(n,m)\in\C N[1]$. Therefore, we have $(n,m,-1)\notin\C G'_0$ for any $(n,m)\in\C N[1]$, which implies that $\C G'_0$ is involution-free.\\

\noindent\textbf{$\C G'$ is complete and $\C R[2]$-free:} The completeness conditions at $(n,n,1)\in\C N[2]_0$, $n\in T_0\setminus T^{n_2}_0$, for any $a\in T_1\setminus\overline T^{n_2}_1$ are uniquely satisfied by $(a,a,1)\in\C G'_1$; at $(n,m,1)\in(\C G\cap\C N'[2])_0$ for $a\in T_1\setminus\{a_{n_1}\}$ with source $m$ or target $n$ by the unique choices provided by the graph map $\C G\cap\C N'[2]$; at $(0,0,1)$ for $(0\xrightarrow{a_{n_2}}n_2)$ by $(a_{n_1},a_{n_2},1)\in\C G'_1$ since $(n_1,n_2,1)\in\C G'_0$; and finally at $(n_1,n_2,1)$ for $(0\xrightarrow{a_{n_1}}n_1)$ again by $(a_{n_1},a_{n_2},1)\in\C G'_1$. Since all the choices are unique, in view of \Cref{prop: uniqueness=Rfree}, we conclude that $\C G'$ is $\C R[2]$-free too.\\

\noindent\textbf{$\C G'$ is connected:} Since $T$ is a tree, there is a traversal in $\C G'$ from $(n,n,1)$ to $(0,0,1)$ for all $(n,n,1)\in\C G'_0$. Moreover, there is the link $(0,0,1)\xrightarrow{(a_{n_1},a_{n_2},1)}(n_1,n_2,1)$. Hence, the connectedness of the subnetwork $\C G\cap\C N'[2]$ implies the connectedness of $\C G'$.\\

We will now show that the homomorphism $\C I:=\C H_{\C G'}$ (cf. \Cref{prop: complete and Rfree is homo}) is a non-trivial idempotent homomorphism. It is clearly non-trivial as $\C I(v_{n_2})=0$ and $\C I(v_0)=v_0$. Now, it is sufficient to check for idempotency at those basis elements $v_n$ for which $\C I(v_n)\notin\{0,v_n\}$. By the construction, each such $n$ lies in ${T}^{n_1}_0$, and we have $\C I(v_n)=v_n+v_{n'}$ for some $n'\in{T}_0^{n_2}$. Since $\C I(v_{n'})=0$, we get $\C I^2(v_n)=\C I(v_n)$ as required, thereby completing the proof.

(B) In this case, we choose the generalised graph map $\C G':=(\C G'_0,\C G'_1,\C{E_{G'}})$ as follows:
\begin{align*}
    \C G'_0&:=(\C G\cap\C N'[2])_0\cup\{(n,n,1)\in\C N[2]_0\mid n\in T_0\setminus{T}_0^{n_1}\};\\
    \C G'_1&:=(\C G\cap\C N'[2])_1\cup\{(a,a,1)\in\C N[2]_1\mid a\in T_1\setminus\overline T^{n_1}_1\}\cup\{(a_{n_1},a_{n_2},1)\};\text{ and}\\
    \C{E_{G'}}&:=\emptyset.
\end{align*}
The rest of the proof is similar to that of (A) above.
\end{proof}

Once again, we defer the applications of the above theorem to \S~\ref{sec:Dn indecomposable}.

We expect that the conclusion of \Cref{thm: conv main2} holds for an arbitrary generalised tree module.

\begin{conj}\label{conj}
Suppose $M:=F_\lambda(V_T)$ is a generalised tree module. If there exist a subtree $T_1$ of $T$ of the form $n_1\xleftarrow{a}n\xrightarrow{b}n_2$ or $n_1\xrightarrow{a}n\xleftarrow{b}n_2$, and a generalised graph map $\C G$ with  $F(a)=F(b)$ and $(n_1,n_2)\in\pi(\C G_0)$, then $M$ is decomposable.
\end{conj}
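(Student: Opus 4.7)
I would prove the conjecture by directly constructing a non-trivial idempotent endomorphism $\C I\in\R{End}_\Lambda(M)$ from the data of $\C G$ and the fork, and invoking \Cref{prop: indecomposability condn assem}. This approach generalises the strategy used in the proof of \Cref{thm: conv main2}(A). By \Cref{rmk: -generalised graph map} one may assume $(n_1,n_2,1)\in\C G_0$; I will treat the case where the fork has the form $n_1\xleftarrow{a}n\xrightarrow{b}n_2$, as the opposite orientation is dual.

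First I would remove the arrow $b$ from $T$, obtaining two connected subtrees $T^A$ (containing $n$ and $n_1$) and $T^B$ (containing $n_2$); by construction $b$ is the unique arrow of $T$ crossing this partition. Then define the linear map $\C I:M\to M$ by
\begin{equation*}
\C I(v_k):=\begin{cases}v_k+\sum_{m\in T^B_0}\mu^{\C H_{\C G}}_{k,m}\,v_m & \text{if }k\in T^A_0,\\ 0 & \text{if }k\in T^B_0,\end{cases}
\end{equation*}
where $\mu^{\C H_{\C G}}_{k,m}$ is the coefficient of $v_m$ in $\C H_{\C G}(v_k)$. Idempotency $\C I^2=\C I$ is immediate, since for $k\in T^A_0$ the ``shifted'' portion of $\C I(v_k)$ lies in the span of $\{v_m:m\in T^B_0\}$, which is annihilated by $\C I$. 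Non-triviality follows because $\C I(v_n)$ retains the summand $v_n$ (hence $\C I\neq\B 0$) while $\C I(v_{n_2})=0\neq v_{n_2}$ (hence $\C I\neq\B 1_M$).

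The core of the proof is to show that $\C I$ commutes with the action of every arrow $\gamma\in Q_1$. The case $k\in T^B_0$ is automatic because $T^B$ is closed under outgoing arrows. The case $k\in T^A_0$ with $\gamma\neq F(b)$ reduces to the endomorphism property of $\C H_{\C G}$ projected to $T^B_0$-coefficients, combined with the observation that no arrow with $F$-image $\gamma$ crosses from $T^A_0$ to $T^B_0$. The delicate case is $k\in T^A_0$ with $\gamma=F(b)$, where the crossing arrow $b$ itself contributes an extra term; a direct computation using the endomorphism identity for $\C H_{\C G}$ reduces this case to two conditions on the matrix of $\C H_{\C G}$: (i) $\mu^{\C H_{\C G}}_{k,n}=0$ for every $k\in T^A_0\setminus\{n\}$, and (ii) $\sum_{m\in T^B_0}\mu^{\C H_{\C G}}_{n_2,m}\,v_m=(\mu^{\C H_{\C G}}_{n,n}-1)\,v_{n_2}$.

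The hard part will be establishing (i) and (ii). My plan is to exploit the completeness and $\C R[2]$-freeness of $\C G$ together with the fork-induced edges of $\C E^2$: any offending vertex $(k,n,j)\in\C G_0$ with $k\in T^A_0\setminus\{n\}$ should, by applying Condition (2) of \Cref{defn: complete subnetwork} at $(k,n,j)$ to the arrows $a,b$ at $n$, force a length-two traversal of $\C G$ whose $\pi$-image forms a triangle in $\C N[1]$, contradicting $\C R[2]$-freeness; condition (ii) should follow from a parallel analysis at vertices $(n_2,m,j)\in\C G_0$ using the fork-edge $\{(n_1,n_2,1),(n_2,n_2,-1)\}\in\C E^2$ and the choice made by $\C G$ for the incoming arrow $a$ at $(n_1,n_2,1)$. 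As a fallback, should these identities not hold for an arbitrary choice of $\C G$, one may first replace $\C G$ by a minimal sub-generalised-graph-map of $\C G$ still containing $(n_1,n_2,1)$, for which the problematic support configurations are expected to be pruned away by minimality.
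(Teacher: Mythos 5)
This statement is \Cref{conj}, which the paper explicitly leaves open (``We expect that the conclusion of \Cref{thm: conv main2} holds for an arbitrary generalised tree module''), so there is no proof in the paper to compare against; your submission is moreover a plan rather than a proof, since the decisive identities (i) and (ii) are only announced. More seriously, the plan cannot succeed as stated: identity (i) is false in general and your ansatz for $\C I$ is too rigid. Take $Q=\B A_2$ and let $T$ have vertices $1,2,3,4$ and arrows $a:1\to 2$, $b:1\to 3$, $c:4\to 2$, with $F(1)=F(4)=\B 1$, $F(2)=F(3)=\B 2$, $F(a)=F(b)=F(c)=\B a$, so $\B a\cdot v_1=v_2+v_3$ and $\B a\cdot v_4=v_2$. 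For the fork $2\xleftarrow{a}1\xrightarrow{b}3$ we have $n=1$, $n_1=2$, $n_2=3$, $T^A_0=\{1,2,4\}$, $T^B_0=\{3\}$. Completeness at $(2,3,j)$ applied to the incoming arrow $c:4\to 2$ forces $(4,1,j)\in\C G_0$ for \emph{every} generalised graph map $\C G$ containing $(2,3,j)$: Condition (1b) is unavailable because $4$ has a unique outgoing arrow, and the only arrow of $T^2$ into $3$ is $b$, so Condition (1a) demands the arrow $(c,b,j)$ from $(4,1,j)$. One such $\C G$ has $\C G_0=\{(4,1,1),(1,1,1),(1,4,-1),(2,2,1),(2,3,1),(3,2,-1)\}$, arrows $(c,a,1),(c,b,1),(a,b,1)$, and edges $\{(2,2,1),(3,2,-1)\}$, $\{(1,1,1),(1,4,-1)\}$; its homomorphism is $v_1\mapsto v_1-v_4$, $v_4\mapsto v_1$, $v_2\mapsto v_2+v_3$, $v_3\mapsto -v_2$, and one checks it is complete, $\C R[2]$-free (no two adjacent links project onto a triangle), connected and involution-free. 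Hence $\mu^{\C H_{\C G}}_{4,1}=\pm1$ for every admissible $\C G$, your condition (i) fails, and indeed $\C I(\B a\cdot v_4)=\C I(v_2)=v_2\pm v_3\neq v_2=\B a\cdot\C I(v_4)$, so $\C I$ is not a homomorphism; the minimality fallback cannot help because $(4,1,j)$ is forced, and relabelling the fork as $(n_1,n_2)=(3,2)$ merely trades the failure of (i) for a failure of (ii). The module \emph{is} decomposable here, but the idempotent realising this sends $v_1\mapsto v_4$, a shift \emph{within} $T^A$, which your shape ``identity on $T^A$ plus a shift into $T^B$'' can never produce.

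A second, independent gap is the claim that the orientation $n_1\xrightarrow{a}n\xleftarrow{b}n_2$ is ``dual''. There $b:n_2\to n$ points out of $T^B$, so $T^B$ is not closed under outgoing arrows and the case $k\in T^B_0$ is not automatic: already for the paper's \Cref{exmp: gtm can be decomposable} ($1\xrightarrow{a}2\xleftarrow{b}3$) your map gives $\C I(\B a\cdot v_3)=\C I(v_2)=v_2$ while $\B a\cdot\C I(v_3)=0$. The correct idempotent there ($v_3\mapsto v_1$, $v_1\mapsto v_1$, $v_2\mapsto v_2$) again has a different shape; note that the paper's \Cref{thm: conv main2} needs genuinely different constructions in its parts (A) and (B) for exactly this reason. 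Your overall strategy---manufacture a nontrivial idempotent from $\C G$ and invoke \Cref{prop: indecomposability condn assem}, generalising \Cref{thm: conv main2}---is the natural one, but the idempotent must be assembled from a more flexible modification of $\C G$ (allowing shifts inside $T^A$ and orientation-dependent choices) than the one you propose, and it is precisely the difficulty of controlling such modifications for arbitrary trees that leaves this statement a conjecture.
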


\section{All indecomposables for Dynkin quivers of type $\B D$ are generalised tree modules}\label{sec:Dn indecomposable}

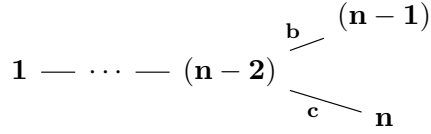
\begin{figure}[H]
    \[\begin{tikzcd}[column sep=small, row sep=2pt]
                         &                           &                                                                    & \B{(n-1)} \\
\B{1} \arrow[r, no head] & \cdots \arrow[r, no head] & \B{(n-2)} \arrow[ru, "\B b", no head] \arrow[rd, "\B c"', no head] &           \\
                         &                           &                                                                    & \B{n}    
\end{tikzcd}\]
    \caption{Dynkin diagram $\B D_n$}
    \label{fig:DN quiver}
\end{figure}

Consider a Dynkin quiver $\B{Q}=(\B{Q}_0,\B{Q}_1,\B s,\B t)$ of type $\B{D}_n$ (see \Cref{fig:DN quiver}). Let $\B b,\B c$ be the unique arrows of the quiver $\B Q$ satisfying $\{\B s(\B b),\B t(\B b)\}=\{\B n-\B 2,\B n-\B 1\}$ and $\{\B s(\B c),\B t(\B c)\}=\{\B n-\B 2,\B n\}$.  The indecomposable representations over $\B Q$ were classified by Gabriel \cite{gabriel1972unzerlegbare} in terms of positive roots of the corresponding root systems (or equivalently, the dimension vectors of such representations). We refer an interested reader to \cite[\S~8.3.2]{schiffler2014quiver} and \cite[\S~VII.2]{assem2006elements} for more discussion on Dynkin quivers of type $\B D$ as well as for the list of such dimension vectors. For each prescribed dimension vector from a subset of the list, we construct three generalised tree modules. We show using Theorems \ref{thm: main 2} and \ref{thm: conv main2} that the ``correct choices" of generalised tree modules constructed this way are indecomposable while the ``wrong choices'' lead to decomposability. As discussed towards the end of \S~\ref{sec: intro}, since all indecomposable $\C K\B Q$-modules are exceptional (see \cite[(8)]{ringel2006book}), a result of Ringel \cite{ringelexceptional} also yields that they are isomorphic to generalised tree modules.

An arbitrary dimension vector of an indecomposable module $M=((U_v)_{v\in\B Q_0},(\psi_a)_{a\in\B Q_1})$ over $\B Q$ has $\R{dim}(U_v)\in\{0,1,2\}$. If $\R{dim}(U_v)\leq1$ for every $v\in\B Q_0$, then $M$ is a tree module.

Now assume that $M$ is not a tree module. Then $\overline{\dim}(M)$ is of the form $0\cdots01\cdots12\cdots2{}^1_1$, where the strings of $1$s and $2$s have non-zero lengths. Let $\B a\notin\{\B b,\B c\}$ be the unique arrow of the quiver $\B Q$ satisfying $\{\R{dim}(U_{\B{s(a)}}),\R{dim}(U_{\B{t(a)}})\}=\{1,2\}$. Using the pigeonhole principle, there exist $\B a',\B b'\in\{\B a,\B b,\B c\}$ with $\B a'\neq\B b'$ and $\R{dim}(U_{\B t(\B a')})=\R{dim}(U_{\B t(\B b')})$. Let $\B c'$ be the unique element in $\{\B{a,b,c}\}\setminus\{\B{a',b'}\}$. Consider the generalised tree modules $M:=F_\lambda(V_T)$, where $T$ is shown in \Cref{fig:ind modules of DN} based on the choice of $\B c'$--here the quiver morphism $F$ is clear from the diagrams in each case.

\begin{figure}[H]
\vspace{7mm}
\begin{subfigure}{0.3\textwidth}
\[\begin{tikzcd}[column sep=small, row sep=2pt,
every matrix/.append style = {name=m},remember picture, overlay]
&&&& \bullet \\
&& \bullet \arrow[r, no head, dashed] & \bullet \arrow[ru, "b", no head]&\\
{\bullet}\arrow[r, no head, dashed] & \bullet \arrow[rd, "a_2"', no head] \arrow[ru, "a_1", no head] &&&\\
&& \bullet \arrow[r, no head, dashed] & \bullet \arrow[rd, "c"', no head] &\\
&&&& \bullet
\end{tikzcd}\]
\begin{tikzpicture}[remember picture, overlay,E/.style = {rectangle, draw=red, inner xsep=0.8pt,inner ysep=0.8pt, fit=#1}]
\node[E = (m-2-3) (m-4-3)] {};
\node[E = (m-2-4) (m-4-4)] {};
\end{tikzpicture}
\vspace{9mm}
\caption{$\B c'=\B a$}
\end{subfigure}
\begin{subfigure}{0.3\textwidth}
\[\begin{tikzcd}[column sep=small, row sep=2pt,
every matrix/.append style = {name=m},remember picture, overlay]
                                   &                                  &                                    &                                    & \bullet \arrow[lddd, "b_2", no head] \\
                                   &                                  & \bullet \arrow[r, no head, dashed] & \bullet \arrow[ru, "b_1", no head] &                                      \\
\bullet \arrow[r, no head, dashed] & \bullet \arrow[ru, "a", no head] &                                    &                                    &                                      \\
                                   &                                  & \bullet \arrow[r, no head, dashed] & \bullet \arrow[rd, "c"', no head]  &                                      \\
                                   &                                  &                                    &                                    & \bullet                             
\end{tikzcd}\]
\begin{tikzpicture}[remember picture, overlay,E/.style = {rectangle, draw=red, inner xsep=0.8pt,inner ysep=0.8pt, fit=#1}]
\node[E = (m-2-3) (m-4-3)] {};
\node[E = (m-2-4) (m-4-4)] {};
\end{tikzpicture}
\vspace{9mm}
\caption{$\B c'=\B b$}
\end{subfigure}
\begin{subfigure}{0.3\textwidth}
\[\begin{tikzcd}[column sep=small, row sep=2pt,
every matrix/.append style = {name=m},remember picture, overlay]
                                   &                                   &                                    &                                     & \bullet                               \\
                                   &                                   & \bullet \arrow[r, no head, dashed] & \bullet \arrow[ru, "b", no head]    &                                       \\
\bullet \arrow[r, no head, dashed] & \bullet \arrow[rd, "a"', no head] &                                    &                                     &                                       \\
                                   &                                   & \bullet \arrow[r, no head, dashed] & \bullet \arrow[rd, "c_2"', no head] &                                       \\
                                   &                                   &                                    &                                     & \bullet \arrow[luuu, "c_1"', no head]
\end{tikzcd}\]
\begin{tikzpicture}[remember picture, overlay,E/.style = {rectangle, draw=red, inner xsep=0.8pt,inner ysep=0.8pt, fit=#1}]
\node[E = (m-2-3) (m-4-3)] {};
\node[E = (m-2-4) (m-4-4)] {};
\end{tikzpicture}
\vspace{9mm}
\caption{$\B c'=\B c$}
\end{subfigure}
\caption{Underlying undirected trees of the generalised tree modules over Dynkin quivers of type $\B D_n$, with $F(a)=F(a_j)=\B a$, $F(b)=F(b_j)=\B b$ and $F(c)=F(c_j)=\B c$ for $j\in\{1,2\}$}
    \label{fig:ind modules of DN}
\end{figure}

Since the arguments for the (in)decomposability of the generalised tree modules sketched in \Cref{fig:ind modules of DN} are similar, we focus our attention to a particular quiver $\B Q$ where the orientations of the arrows $\B a,\B b,\B c$ are as shown in \Cref{fig: particular DN quiver}. Here we are assuming that $\B l$ is the least such that $\dim(U_\B l)>0$.

\begin{figure}[H]
    \[\begin{tikzcd}[column sep=small, row sep=2pt]
                       &                           &                         &                           &      &                                                 &                           &                               & \B{(n-1)} \arrow[ld, "\B b"'] \\
\B1 \arrow[r, no head] & \cdots \arrow[r, no head] & \B l \arrow[r, no head] & \cdots \arrow[r, no head] & \B k & \B{(k+1)} \arrow[l, "\B a"'] \arrow[r, no head] & \cdots \arrow[r, no head] & \B{(n-2)} \arrow[rd, "\B c"'] &                               \\
                       &                           &                         &                           &      &                                                 &                           &                               & \B{n}                        
\end{tikzcd}\]
    \caption{A particular Dynkin quiver $\B Q$ of type $\B D_n$}
    \label{fig: particular DN quiver}
\end{figure}
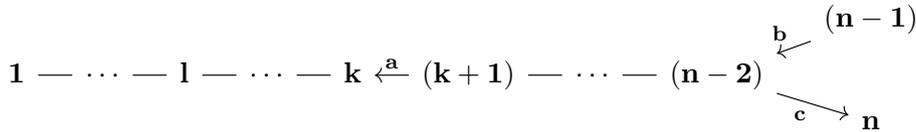

\noindent\textbf{If $T$ is as in \Cref{fig:particular module dynkin}, then $M:=F_\lambda(V_T)$ is indecomposable:} Here the bound quiver morphism $F:T\to \B Q$ is obvious from the diagram. This is consistent with \Cref{fig:ind modules of DN}(B) as $\B c'=\B b$ is the only ``correct'' choice.

\begin{figure}[H]
\vspace{10mm}
    \[\begin{tikzcd}[column sep=small, row sep=2pt,
every matrix/.append style = {name=m},remember picture, overlay]
                             &   &                                                    &                          & (n-1) \arrow[ld, "b_1"'] \arrow[lddd, "b_2"] \\
                             &   & (k+1)' \arrow[r, no head, dashed] \arrow[ld, "a"'] & (n-2)'                   &                                              \\
l \arrow[r, no head, dashed] & k &                                                    &                          &                                              \\
                             &   & (k+1)'' \arrow[r, no head, dashed]                 & (n-2)'' \arrow[rd, "c"'] &                                              \\
                             &   &                                                    &                          & n                                           
\end{tikzcd}\]
\begin{tikzpicture}[remember picture, overlay,E/.style = {rectangle, draw=red, inner xsep=0.8pt,inner ysep=0.8pt, fit=#1}]
\node[E = (m-2-3) (m-4-3)] {};
\node[E = (m-2-4) (m-4-4)] {};
\end{tikzpicture}
\vspace{9mm}
    \caption{Sketch of a tree $T$ of type described in \Cref{fig:ind modules of DN}(B)}
    \label{fig:particular module dynkin}
\end{figure}

First, we will discuss some properties of the networks $\C N[1]$ and $\C N[2]$ associated with the pair $(M,M)$. The pullback network $\C N[1]$ (see \Cref{fig:N1 for the particular Dynkin module}) contains exactly two $\C R[1]$-systems, each containing exactly one triangle (highlighted in red). Therefore, the $2$-covering network $\C N[2]$ associated with the pair $(M,M)$ contains two $\C R[2]$-systems relative to $\C N[1]_0$, each containing only one hexagon. It can be verified that a traversal from $(n,m,1)$ to $(n,m,-1)$ in $\C N[2]$ for any $(n,m)\in\C N[1]_0$ contains two adjacent links of a hexagon, and therefore cannot be $\C R[2]$-free. This proves that the pair $(M,M)$ is ghost-free.

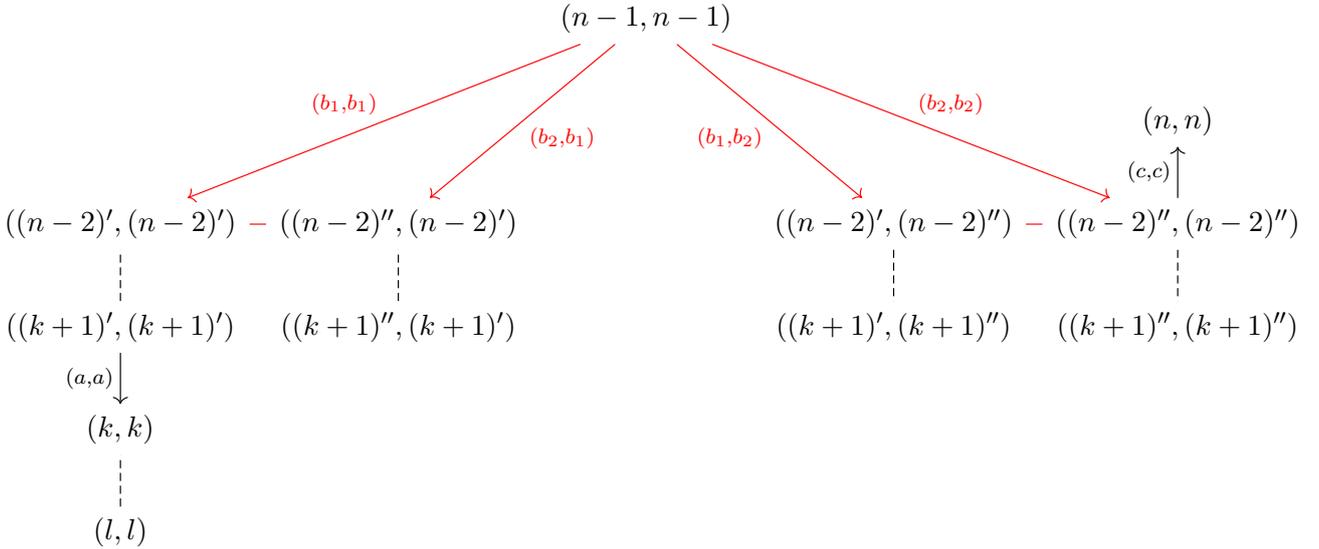
\begin{figure}[H]
    \[\begin{tikzcd}[column sep=tiny]
&                                               & {(n-1,n-1)} \arrow[lldd, "{(b_1,b_1)}"',color=red] \arrow[ldd, "{(b_2,b_1)}",color=red] \arrow[rdd, "{(b_1,b_2)}"',color=red] \arrow[rrdd, "{(b_2,b_2)}",color=red] &                                                                  &                                                \\
                                             &                                               &                                                                                                                             &                                                                  & {(n,n)}                    \\
{((n-2)',(n-2)')} \arrow[r, no head,color=red]         & {((n-2)'',(n-2)')}                            &                                                                                                                             & {((n-2)',(n-2)'')} \arrow[r, no head,color=red] \arrow[d, no head, dashed] & {((n-2)'',(n-2)'')} \arrow[d, no head, dashed]\arrow[u, "{(c,c)}"] \\
{((k+1)',(k+1)')}\arrow[d, "{(a,a)}"'] \arrow[u, no head, dashed] & {((k+1)'',(k+1)')} \arrow[u, no head, dashed] &                                                                                                                             & {((k+1)',(k+1)'')}                                               & {((k+1)'',(k+1)'')}                            \\
{(k,k)}                  &                                               &                                                                                                                             &                                                                  &                                                \\
{(l,l)} \arrow[u, no head, dashed]           &                                               &                                                                                                                             &                                                                  &                                               
\end{tikzcd}\]
    \caption{Pullback network associated with the pair $(M,M)$}
    \label{fig:N1 for the particular Dynkin module}
\end{figure}

There is only one subtree of $T$ (upto relabelling) of the form $n_1\xleftarrow{a''}n\xrightarrow{b''}n_2$ or $n_1\xrightarrow{a''}n\xleftarrow{b''}n_2$ with $n_1\neq n_2$ and satisfying $F(a'')=F(b'')$, namely $(n-2)'\xleftarrow{b_1}(n-1)\xrightarrow{b_2}(n-2)''$. On the one hand, since the completeness condition for arrow $c$ is not satisfied at $((n-2)',(n-2)'',j)$ for any $j\in\{-1,1\}$, there does not exist a generalised graph map containing $((n-2)',(n-2)'',j)$. On the other hand, any generalised graph map containing $((n-2)'',(n-2)',j)$ for some $j\in\{-1,1\}$ must also contain $((k+1)'',(k+1)',j)$. However, since the completeness condition for arrow $a$ is not satisfied at $((k+1)'',(k+1)',j)$, we obtain the impossibility of the existence of a generalised graph map containing $((n-2)'',(n-2)',j)$ as well. Thus, we have shown that Condition (1) of the hypotheses of \Cref{thm: main 2} holds.

Also, note that any generalised graph map containing $(n_1,n_2)$ for $n_1\neq n_2$ in its support must contain $((n-2)',(n-2)'',j)$ or $((k+1)'',(k+1)',j)$ for some $j\in\{-1,1\}$ as its vertices. As shown in the above paragraph, the completeness condition for at least one arrow fails at these vertices. Hence, there does not exist a generalised graph map containing $(n_1,n_2)$ for $n_1\neq n_2$ in its support. This proves Condition (2) of the hypotheses of \Cref{thm: main 2}. Therefore, we conclude that $M$ is indecomposable.\\

\noindent\textbf{If $T'$ is as in \Cref{fig:particular example decomposable Dn}, then $M'=F'_\lambda(V_{T'})$ is decomposable:} Here the bound quiver morphism $F':T'\to \B Q$ is obvious from the diagram. This is an application of \Cref{thm: conv main2}; the decomposability is due to the fact that $\B c'=\B c$ is a ``wrong'' choice.

\begin{figure}[H]
\vspace{10mm}
    \[\begin{tikzcd}[column sep=small, row sep=2pt,
every matrix/.append style = {name=m},remember picture, overlay]
                             &   &                                                    &                            & (n-1) \arrow[ld, "b"'] \\
                             &   & (k+1)' \arrow[r, no head, dashed]                  & (n-2)' \arrow[rddd, "c_1"] &                        \\
l \arrow[r, no head, dashed] & k &                                                    &                            &                        \\
                             &   & (k+1)'' \arrow[r, no head, dashed] \arrow[lu, "a"] & (n-2)'' \arrow[rd, "c_2"'] &                        \\
                             &   &                                                    &                            & n                     
\end{tikzcd}\]
\begin{tikzpicture}[remember picture, overlay,E/.style = {rectangle, draw=red, inner xsep=0.8pt,inner ysep=0.8pt, fit=#1}]
\node[E = (m-2-3) (m-4-3)] {};
\node[E = (m-2-4) (m-4-4)] {};
\end{tikzpicture}
\vspace{9mm}
    \caption{Sketch of a tree $T'$ of type described in \Cref{fig:ind modules of DN}(C)}
    \label{fig:particular example decomposable Dn}
\end{figure}

Let $T^1$ and $T^2$ be the subtrees $(k+1)'\text{- -}(n-2)'\xleftarrow{b}(n-1)$ and $l\text{- -} k\xleftarrow{a}(k+1)''\text{- -} (n-2)''$ of $T'$ respectively. The generalised graph map induced by the set $\C X\subseteq\C N[2]_0$ defined as
$$\C X:=\{(m,m,1)\mid m\in\{n-1,n\}\}\cup\{(m'',m',1)\mid k+1\leq m\leq n-2\}\cup\{(m',m',1)\mid k+1\leq m\leq n-2\}$$  satisfies the hypotheses of \Cref{thm: conv main2} since $((n-2)'',(n-2)',1)\in\C X$. As a result, we conclude the decomposability of $M$, as promised. A similar argument involving \Cref{thm: conv main2} would show that any wrong choice of $\B c'$ would lead to decomposability.

\subsection*{Acknowledgements}
The authors thank Thomas Br\"ustle for his comments on the first draft as well as for bringing the works of Ringel \cite{ringelexceptional}, Kinser \cite{Kinser}, and Katter and Mahrt \cite{katter_mahrt} to their attention.

\subsection*{Funding} Annoy Sengupta was supported by a \emph{Council of Scientific and Industrial Research (CSIR)} India - Research Grant No. 09/092(1090)/2021-EMR-I.

\subsection*{Research Support}
Annoy Sengupta reports financial support was provided by \emph{Council of Scientific and Industrial Research (CSIR)}.

\subsection*{Relationships}
There are no additional relationships to disclose.

\subsection*{Patents and Intellectual Property}
There are no patents to disclose.

\subsection*{Other activities}
There are no additional activities to disclose.
\subsection*{Data availability statement} This manuscript has no associated data.

\bibliographystyle{alpha}
\bibliography{main}
\vspace{0.2in}
\noindent{}Annoy Sengupta\\
Indian Institute of Technology Kanpur\\
Uttar Pradesh, India\\
Email 1: \texttt{annoysgp20@iitk.ac.in}\\
Email 2: \texttt{sengupta.annoy44@gmail.com}
\vspace{0.2in}

\noindent{}Corresponding Author: Amit Kuber\\
Indian Institute of Technology Kanpur\\
Uttar Pradesh, India\\
Email 1: \texttt{askuber@iitk.ac.in}\\
Email 2: \texttt{expinfinity1@gmail.com}
Phone: (+91) 512 259 6721\\
Fax: (+91) 512 259 7500
\end{document}